\numberwithin{equation}{section}
\theoremstyle{definition}
\newtheorem{definition}{Definition}[section]
\theoremstyle{definition}
\newtheorem*{ntn}{Notation}
\newtheorem*{rmk}{Remark}
\newtheorem*{rmks}{Remarks}
\newtheorem{remark}[definition]{Remark}
\theoremstyle{plain}
\newtheorem{theorem}[definition]{Theorem}
\newtheorem{lemma}[definition]{Lemma}
\newtheorem{cor}[definition]{Corollary}
\newcommand{\beas}{\begin{eqnarray*}}
\newcommand{\eeas}{\end{eqnarray*}}
\newcommand{\bes} {\begin{equation*}}
\newcommand{\ees} {\end{equation*}}
\newcommand{\be} {\begin{equation}}
\newcommand{\ee} {\end{equation}}
\newcommand{\bea} {\begin{eqnarray}}
\newcommand{\eea} {\end{eqnarray}}
\newcommand{\eps}{\varepsilon}
\newcommand{\zt}{\zeta}
\newcommand{\de} {\delta}
\newcommand{\lam}{\lambda}
\newcommand{\heps}{\hat\eps}
\newcommand\smpartl[2]{\frac{\partial{#1}}{\partial{#2}}}
\newcommand\partl[2]{\dfrac{\partial{#1}}{\partial{#2}}}
\newcommand\smsecpartl[3]{\frac{\partial^2{#1}}{\partial{#2}\text{ }\partial{#3}}}
\newcommand\secpartl[3]{\dfrac{\partial^2{#1}}{\partial{#2}\partial{#3}}}
\newcommand{\bdy}{\partial}
\newcommand{\Om}{\Omega}
\newcommand{\D}{\mathbb{D}}
\newcommand\pkdom[1]{\overline{#1}\times\bdy{#1}}
\newcommand{\cont}{\mathcal{C}}
\newcommand{\bbh}{\mathbb{H}}
\newcommand{\rea}{\operatorname{Re}}
\newcommand{\ima}{\operatorname{Im}}
\newcommand{\K}{\mathcal{K}}
\newcommand{\V}{\mathcal{V}}
\newcommand{\cler}{\mathfrak{l}}
\newcommand{\levip}{\mathfrak{p}}
\newcommand{\bbb}{\mathbb{B}}
\newcommand\conj[1]{\overline{#1}}
\newcommand{\vol}{\operatorname{vol}}
\newcommand{\rvol}{\vol_{_3}\!}
\newcommand{\inte}{\text{int}}
\newcommand\wt[1]{\widetilde{#1}}
\newcommand{\Hess}{\operatorname{Hess}}
\newcommand{\JacC}{\operatorname{J}_\C\!}
\newcommand{\JacR}{\operatorname{J}_\rl\!}
\newcommand{\tran}{^{\operatorname{tr}}}
\newcommand{\ldbr}{\Big<\Big<}
\newcommand{\rdbr}{\Big>\Big>}
\newcommand{\lbr}{\Big<}
\newcommand{\rbr}{\Big>}
\newcommand{\lkor}{l_{\operatorname{kor}}}
\newcommand{\pow}{\operatorname{pow}}
\newcommand{\hpow}{\operatorname{hpow}}
\newcommand{\cell}{\operatorname{cell}}
\newcommand{\hcell}{\operatorname{hcell}}
\newcommand{\ldiv}{\operatorname{ldiv}}
\newcommand{\Div}{\operatorname{div}}
\newcommand{\id}{\operatorname{Id.}}
\newcommand{\dil}{\operatorname{dil}}
\newcommand{\doth}{\cdot_{\scriptscriptstyle\bbh}}
\newcommand\polcl[2]{\mathcal{P}_{#2}{(#1)}}
\newcommand{\CC}{\mathbb{C}^2}
\newcommand{\C} {\mathbb{C}} 
\newcommand{\rl}{\mathbb{R}}
\newcommand{\Rn} {\mathbb{R}^{n}}
\newcommand{\Rd} {\mathbb{R}^{d}}
\newcommand{\pnat} {\mathbb{N}_+} 
\newcommand{\N} {\mathbb{N}}
\begin{document}
\title{Volume Approximations of Strongly Pseudoconvex Domains}
\author{Purvi Gupta}
\address{Department of Mathematics, University of Michigan, Ann Arbor, Michigan, 48105}
\email{purvi.gpt@gmail.com}
\thanks{This work was partially supported by the NSF under grant no. DMS 1161735.}
\begin{abstract}In convex geometry, the Blaschke surface area measure on the boundary of a convex domain can be interpreted in terms of the complexity of approximating polyhedra. This approach is formulated in the holomorphic setting to establish an alternate interpretation of Fefferman's hypersurface measure on boundaries of strongly pseudoconvex domains in $\CC$. In particular, it is shown that Fefferman's measure can be recovered from the Bergman kernel of the domain.
\end{abstract}
\keywords{Strongly pseudoconvex domains, Fefferman hypersurface measure, affine surface area measure, polyhedral approximations}
\subjclass{32T15}
\maketitle

\section{Introduction}\label{sec_intro}
The Fefferman hypersurface measure on the boundary of a $\cont^2$-smooth domain $\Om\subset\C^d$ is the $(2d-1)$-form, $\sigma_\Om$, satisfying 
	\bes
		\sigma_\Om\wedge d\rho=4^{\frac{d}{d+1}}M(\rho)^{\frac{1}{d+1}}\omega_{\C^d},
	\ees
where $\omega_{\C^d}$ is the standard volume form on $\C^d$, $\rho$ is a defining function for $\Om$ with $\Om=\{\rho<0\}$, and 
	\bes
		M(\rho)=	-
			\det
				\begin{pmatrix}
					\rho 						& \rho_{\overline{z_k}} \\
					\rho_{z_j}& \rho_{z_j\overline{z_k}}
			\end{pmatrix}_{1\leq j,k\leq d}.
	\ees
First introduced by Fefferman in \cite{Fe}, this measure is well-defined under the added assumption that $\Om$ is strongly pseudoconvex (defined in Section \ref{sec_Prelim}). Moreover, it does not depend on the choice of $\rho$ and satisfies the following transformation law:
	\bes
		F^*\sigma_{F(\Om)}=|\det \JacC F|^{\frac{2d}{d+1}}\sigma_\Om,
	\ees
where $F$ is a biholomorphism on $\Om$ that is $\cont^2$-smooth on $\conj\Om$.

The Fefferman hypersurface measure shares strong connections with the Blaschke surface area measure (explored in \cite{Bar} and \cite{BaHa}, for instance) studied in affine convex geometry. If $K\subset \Rd$ is a $\cont^2$-smooth convex body, the Blaschke surface area measure on $\bdy K$ is given by
	\bes
		\tilde \sigma_{K}=\kappa^{\frac{1}{d+1}}s_{_{\operatorname{Euc}}},
	\ees
where $\kappa$ and $s_{_{\operatorname{Euc}}}$ are the Gaussian curvature function and the Euclidean surface area form on $\bdy K$, respectively. Its resemblance to the Fefferman measure is reflected in the following identity:
	\bes
		A^*\tilde \sigma_{A(K)}=|\det \JacR A|^{\frac{d-1}{d+1}}\tilde \sigma_K,
	\ees
where $A$ is an affine transformation of $\Rd$. Since its introduction by Blaschke in \cite{Bla}, several mathematicians have extended the notion of affine surface area to arbitrary convex bodies; see \cite{Le} for details. As this measure is invariant under volume-preserving affine maps, it occurs naturally in volume approximations of convex bodies by polyhedra (see \cite[Chap. 1.10]{GrWi} for a survey). The first complete asymptotic result was due to Gruber \cite{Gru} who showed that if $K\subset\Rd$ is a $\cont^2$-smooth strongly convex body, then
	\be\label{eq_grub}
		\inf\{\vol(P\setminus K):P\in\mathcal{P}_n^c\}
			\sim \frac{1}{2} \Div_{d-1}\left(\int_{\bdy K}\tilde\sigma_K\right)^{(d+1)/(d-1)}\frac{1}{n^{2/(d-1)}}
	\ee
as $n\rightarrow\infty$, where $\mathcal{P}_n^c$ is the class of all polyhedra that circumscribe $K$ and have at most $n$ facets, and $\Div_{d-1}$ is a dimensional constant. Ludwig \cite{Lu} later showed that, if the approximating polyhedra are from $\mathcal{P}_n$, the class of {\em all} polyhedra with at most $n$ facets, then
	\be\label{eq_lud}
		\inf\{\vol(K\Delta P):P\in\mathcal{P}_n\}
			\sim \frac{1}{2} \ldiv_{d-1}\left(\int_{\bdy K}\tilde\sigma_K\right)^{(d+1)/(d-1)}\frac{1}{n^{2/(d-1)}}
	\ee
as $n\rightarrow\infty$, where $\Delta$ denotes the symmetric difference between sets and $\ldiv_{d-1}$ is a dimensional constant. In \eqref{eq_grub} and \eqref{eq_lud}, the constants $\Div_{d-1}$ and $\ldiv_{d-1}$ are named after Dirichlet-Voronoi and Laguerre-Dirichlet-Voronoi tilings (see the appendix), respectively, since these are used to prove the formulae. Later, B{\"o}r{\"o}czky \cite{Bo} proved both these formulae for all $\cont^2$-smooth convex bodies. Similar asymptotics have been obtained using other notions of complexity for a polyhedron --- such as the number of vertices.

In \cite{Bar}, Barrett asks whether such relations can be found between the Fefferman hypersurface measure on a pseudoconvex domain and the complexity of approximating analytic polyhedra. An {\em analytic polyhedron} in $\Om$ is a relatively compact subset that is a union of components of any set of the form 
	\bes
		P=\{z\in\Om:|f_j(z)|<1,\ j=1,...,n\}, 
	\ees
where $f_1,...,f_n$ are holomorphic functions in $\Om$. The natural notion of complexity for an analytic polyhedron, $P$, is its order --- i.e., the number of inequalities that define $P$. This setup, however, is not suited for our purpose as demonstrated by a result due to Bishop (Lemma 5.3.8 in \cite{Ho}) which says that any pseudoconvex domain in $\C^d$ can be approximated arbitrarily well (in terms of the volume of the gap) by analytic polyhedra of order at most $2d$. With the help of an example, we indicate where the problem lies. Let $\Om=\D$ be the unit disc in $\C$. Consider the lemniscate-bound domains
	\bes
		P_n:=\left\{z\in\D:\prod_{k=0}^{2n-1}
		\frac{n}{\pi}\left|(z-\exp(\tfrac{k \pi i}{n}))^{-1}\right|<1\right\}.
	\ees
Each $P_n$ has order $1$ and satisfies $\{|z|<1-\pi/n\}\subset P_n\subset\{|z|<1-{\sqrt{3}\pi}/{2n}\}$. Thus, 
	\bes
		\inf\{\vol(\D\setminus P): P\ 
		\text{is an analytic polyhedron of order}\ 1\}=0.
	\ees
If we, instead, declare the complexity of $P_n$ to be $2n$ --- i.e., the number of poles of the function defining $P_n$, then, since $\lim_{n\rightarrow\infty}n\cdot \vol(\D\setminus P_n)\in(0,\infty)$, we can expect results similar to \eqref{eq_grub} and \eqref{eq_lud}.

The above example leads us to a special class of polyhedral objects. For any fixed $f\in\cont(\pkdom{\Om})$, let $\polcl{f}{n}$ be the collection of all relatively compact sets in $\Om$ of the form 
		\bes
			P=\left\{z\in\Om:|f(z,w^j)|>\de_j, j=1,...,n\right\},
		\ees 
where $w^1,...,w^n\in\bdy\Om$ and $ \de_1,...,\de_n>0$. We present a class of functions $f$ for which asymptotic results such as \eqref{eq_grub} and \eqref{eq_lud} can be obtained for domains  in $\CC$.

\begin{theorem}\label{thm_MAIN}
 Let $\Om\subset\subset\CC$ be a $\cont^{4}$-smooth strongly pseudoconvex domain. Suppose $f\in\cont(\conj\Om\times\bdy\Om)$ is such that
	\begin{itemize}	
		\item[$(i)$] $f(z,w)=0$ if and only if $z=w\in\bdy\Om$, and
		\item[$(ii)$] there exist $\eta>1$ and $\tau>0$ such that 
			\be \label{eq_buli}
				f(z,w)=a(z,w)\mathfrak p(z,w)+O\left(\mathfrak p(z,w)^{\eta}\right) 
			\ee
		on $\Om_\tau:=\{(z,w)\in\pkdom{\Om}:||z-w||\leq \tau\}$, where $\mathfrak p$ is the Levi polynomial of some strictly plurisubharmonic defining function of $\Om$ (see Section \ref{sec_Prelim}) and $a$ is some continuous non-vanishing function on $\Om_\tau$.
	\end{itemize}
Then, there exists a constant $\lkor>0$, independent of $\Om$, such that
	\be\label{eq_MAINCONC}
		\inf\{\vol(\Om\setminus P):P\in\polcl{f}{n}\}\sim \frac{1}{2}\lkor \left(\int_{\bdy\Om}
					\sigma_\Om\right)^\frac{3}{2}\frac{1}{\sqrt{n}}
	\ee
as $n\rightarrow \infty$. 
\end{theorem}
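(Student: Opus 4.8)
The plan is to follow the strategy of Gruber and Ludwig, but carried out in a coordinate chart near each boundary point where the holomorphic geometry is replaced by the real convex geometry governed by the Levi form. First I would establish the \emph{localization principle}: for any $P\in\polcl{f}{n}$ the set $\Om\setminus P$ concentrates near $\bdy\Om$ (since each defining inequality $|f(z,w^j)|>\de_j$ removes only a neighborhood of $w^j$ in $\bdy\Om$), and, conversely, an optimal or near-optimal family of points $w^1,\dots,w^n$ must be asymptotically spread over all of $\bdy\Om$. This reduces the global infimum to an integral over $\bdy\Om$ of a local contribution, provided one shows that interaction between the ``caps'' $\{|f(\cdot,w^j)|\le\de_j\}$ is negligible to leading order — this is the standard but delicate ``almost-additivity'' step, handled by a Fubini/covering argument together with the fact that overlaps contribute at lower order in $n$.

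Next comes the \emph{local model computation}. Fix $w\in\bdy\Om$; using hypothesis $(ii)$, on $\Om_\tau$ we have $f(z,w)=a(z,w)\levip(z,w)+O(\levip(z,w)^\eta)$, and the Levi polynomial of a strictly plurisubharmonic defining function $\rho$ satisfies $2\,\rea\,\levip(z,w)=\rho(z)-\rho(w)+L_w(z-w)+o(\|z-w\|^2)$, where $L_w$ is the (positive definite) Levi form at $w$. Hence, near $w\in\bdy\Om$, the region $\{|f(z,w)|\le\de\}\cap\Om$ is, to leading order, a paraboloidal cap cut out by a real quadratic form determined by the Levi form. The volume of such a cap, and more importantly the volume of $\Om$ minus a union of $n$ such caps with centers distributed according to a density $\mu$ on $\bdy\Om$, is then a purely real-variable extremal problem of exactly the type solved by Dirichlet–Voronoi tilings in \eqref{eq_grub}: minimizing $\sum_j(\text{cap volume at }w^j)$ subject to the caps covering $\bdy\Om$ leads, via the tiling inequality, to the optimal density being proportional to the appropriate power of the Levi determinant, and the optimal leading constant being the dimensional constant $\lkor$ (the $d=2$, real-dimension-$3$ boundary case of $\Div$). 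The factors $4^{d/(d+1)}$ and the exponents in Fefferman's measure are designed precisely so that $\int_{\bdy\Om}(\det L_w)^{1/(d+1)}\,ds$ becomes $\int_{\bdy\Om}\sigma_\Om$; tracking these constants and the change of variables carefully yields the exponent $3/2$ and the factor $\tfrac12$ in \eqref{eq_MAINCONC}.

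The two ingredients above give matching upper and lower bounds: for the \emph{upper bound} one exhibits an explicit near-optimal configuration (place the $w^j$ along the level sets of the optimal density, choose the $\de_j$ to balance the caps), and for the \emph{lower bound} one invokes the Dirichlet–Voronoi tiling inequality cap-by-cap after the localization reduction. A technical point is that the model caps are not exact — the function $f$ is only asymptotically $a\cdot\levip$ with error $O(\levip^\eta)$, $\eta>1$, and $\rho$'s Levi polynomial approximates $\rho$ only to second order — so one must show these perturbations affect cap volumes by a relative $o(1)$ as the caps shrink (i.e.\ as $n\to\infty$); the $\cont^4$-smoothness hypothesis and $\eta>1$ are exactly what is needed here. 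I expect the main obstacle to be precisely this perturbation analysis combined with the almost-additivity: one must uniformly control, over all $w\in\bdy\Om$ and all sufficiently small $\de$, both the shape distortion of the caps (so the real-convex-geometry extremal result transfers) and the cross-terms between nearby caps, and then show the errors sum to $o(n^{-1/2})$. Once the local cap-volume asymptotics are established uniformly and the tiling inequality is in place, the passage from the local model to \eqref{eq_MAINCONC} is the same bookkeeping as in Gruber's and Ludwig's proofs, specialized to $\Rd$ with $d-1=3$.
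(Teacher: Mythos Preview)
Your localization-and-patching outline is broadly right, but the heart of the argument is \emph{not} Euclidean Dirichlet--Voronoi theory, and this is where the proposal goes wrong. In the model case $\Om=\mathcal{S}$, $f(z,w)=z_2-\conj{w_2}-2iz_1\conj{w_1}$, the cut $\{|f(\cdot,w)|\le\de\}$ is not a paraboloidal cap governed by a real quadratic form: its projection to $\bdy\mathcal{S}\cong\C\times\rl$ is the Kor\'anyi ball $\{|z_1-w_1|^4+(x_2-u_2+2\ima z_1\conj{w_1})^2\le\de^2\}$, which carries the \emph{anisotropic} Heisenberg scaling $(z_1,x_2)\mapsto(\xi z_1,\xi^2 x_2)$. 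Consequently the constant $\lkor$ is \emph{not} the $3$-dimensional $\Div$ constant; it is a new constant coming from a covering/tiling problem for Kor\'anyi balls in $\bbh$ (what the paper calls a Laguerre--Kor\'anyi tiling, built from a ``horizontal power'' function). The $n^{-1/2}$ decay and the exponent $3/2$ arise because the Hausdorff dimension of $(\bbh,d_{\bbh})$ is $4$, not $3$; a Euclidean Dirichlet--Voronoi argument on a $3$-dimensional boundary would produce the wrong rate.

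This misidentification cascades. The ``almost-additivity'' of cap volumes that you invoke cannot be handled by a Fubini/covering argument alone: in the paper it rests on a doubling property for unions of $f_\mathcal{S}$-cuts, $\vol\big(\bigcup C(w^j,(1+t)\de_j)\big)\le (1+t)^3\vol\big(\bigcup C(w^j,\de_j)\big)$, which is proved using the horizontal-power diagram in $\bbh$ and the Heisenberg dilations. Likewise, passing from the model $\mathcal{S}_\lambda$ to a general $\Om$ is not a second-order Taylor matter; the paper constructs a $\cont^2$ diffeomorphism $\Theta$ via Darboux's theorem so that $\Theta|_{\bdy\Om}$ is a \emph{contactomorphism} onto $\bdy\mathcal{S}_\lambda$, and then proves the delicate estimate $|\levip(z,w)-\cler_\lambda(\Theta z,\Theta w)|\le\eps\big(|\levip|+|\cler_\lambda\circ\Theta|\big)$. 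Only with that estimate in hand can one invoke a comparison lemma (relating $v_n(f)$ and $v_n(g)$ when $|f-g|\le\eps(|f|+|g|)$ and $g$'s cuts satisfy the doubling property) to transport the Heisenberg model computation back to $\Om$. Your outline needs to replace the Euclidean cap model, the $\Div$ constant, and the Darboux/contact step before the bookkeeping you describe can begin.
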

In the tradition of $\Div_{d-1}$ and $\ldiv_{d-1}$, the constant $\lkor$ above is named after Laguerre-Kor{\'a}nyi tilings. Any such tiling comes from a collection of Kor{\' an}yi balls $\mathscr{K}$ covering $[0,1]^3$ in $\rl^3=\C\times\rl$ by minimizing the horizontal power functions $\hpow(\cdot,K):\C\times\rl\rightarrow\C\times\rl$ associated to the balls $K$ in $\mathscr{K}$ (see the appendix for more details). If $\hcell(K)$ denotes the tile associated to $K\in\mathscr{K}$, we obtain that
\bes
\lkor=\lim_{n\rightarrow\infty}\sqrt{n}\inf\left\{-\sum\limits_{K\in\mathscr{K}}\int_{\hcell(K)}\hpow(z',K)dz':\#(\mathscr{K})\leq n \right\},
\ees
Such descriptions have been obtained for $\Div_{d-1}$ and $\ldiv_{d-1}$ as well (see \cite{Gru}, \cite{Lu} and \cite{BoLu}).

We believe that our proof of Theorem \ref{thm_MAIN} can be generalized to higher dimensions, although the exposition becomes exceedingly complicated. We, therefore, merely state what we believe to be is the corresponding asymptotic formula when $\Om\subset\subset\C^d$ and $f\in\cont(\overline\Om\times\bdy\Om)$ satisfy the hypothesis of the above theorem: there is a constant $c_d>0$ such that
	\be\label{eq_MAINCONCd}
		\inf\{\vol(\Om\setminus P):P\in\polcl{f}{n}\}\sim \frac{1}{2}c_d \left(\int_{\bdy\Om}
					\sigma_\Om\right)^\frac{d+1}{d}\frac{1}{n^{1/d}}
	\ee
as $n\rightarrow \infty$. Here, $c_d$ is the $d$-dimensional version of $\lkor$. We encourage the reader to compare the exponents and decay rates in \eqref{eq_MAINCONCd}, \eqref{eq_grub} and \eqref{eq_lud}. A common pattern emerges when we realize that the role played by the Euclidean metric on $\rl^{d-1}$ in obtaining \eqref{eq_grub} and \eqref{eq_lud} is played by the Kor{\'a}nyi metric on the $(2d-1)$-dimensional Heisenberg group in the case of \eqref{eq_MAINCONCd}. The former has Hausdorff dimnesion $d'=d-1$, while the latter has Hausdorff dimension $d'=2d$. The exponent of the boundary measure and the power of $1/n$ in all three formulae now have the unified expressions $(d'+2)/d'$ and $2/d'$, respectively.

Let $\operatorname{LP}(\Om)$ denote the class of $f\in\cont(\pkdom{\Om})$ that satisfy conditions $(i)$ and $(ii)$ of Theorem \ref{thm_MAIN}.  Then, $\operatorname{LP}(\Om)$ is invariant under biholomorphisms that extend ($\cont^2$-)smoothly to the boundary. $\operatorname{LP}(\Om)$ is a natural class when working with strongly pseudoconvex domains and contains elements that yield analytic polyhedra. The Henkin-Ramirez generating function (see \cite[\S 3]{Ra} for details) is one such choice of $f$. So are $K_\Om^{-1/(d+1)}$ and $S_\Om^{-1/d}$, where $K_\Om$ and $S_\Om$ denote the Bergman kernel and Szeg{\H o} kernel on $\Om\subset\C^d$, respectively. In fact, these two choices of $f$ are almost analytic extensions of any defining function of $\Om$. Since the Bergman kernel and almost analytic extensions of defining functions make sense in a context larger than that of strongly pseudoconvex domains, these provide potential candidates for $f$ to obtain results like Theorem \ref{thm_MAIN} in a more general setting. We support this fact with an example where the Fefferman hypersurface measure, though not defined everywhere, is zero almost everywhere with respect to the Hausdorff measure on the boundary. Let $\Om=\D^2$ and $f(z,w)=(1-z_1\overline{w_1})(1-z_2\overline{w_2})$. Then, by choosing appropriate $f$-cuts with sources on the distinguished boundary, it can be shown that
	\bes
		\lim_{n\rightarrow \infty}\sqrt{n}
		\inf\{\vol(\Om\setminus P):P\in\polcl{f}{n}\}=0
	\ees
as $n\rightarrow \infty$. Note that $f$ is a scalar multiple of $K_{\D^2}^{-1/2}$. 

\noindent {\bf Organization of the paper.} Definitions, notation and terminology that feature in multiple sections are collected in Section \ref{sec_Prelim}. The proof of Theorem \ref{thm_MAIN} is spread over subsequent sections. A critical lemma allows us to pass from $\operatorname{LP}(\Om)$ to a single representative --- this lemma and other technical issues are dealt with in Section \ref{sec_techlemm}. In Section \ref{sec_approxmodel}, we address the problem for certain model domains and model polyhedra. The rate of decay and the relevant exponents in \eqref{eq_MAINCONC} become evident in this section. We move from the model to the general case (locally), and from the local to the global case in Sections \ref{sec_localest} and \ref{sec_mainproofs}, respectively. The appendix contains a brief exposition on power diagrams in the Euclidean plane, and introduces a new tiling problem on the Heisenberg group. The latter emerged naturally in the course of this work, and seems indispensable in proving Theorem \ref{thm_MAIN} (in particular, Lemma \ref{lem_adm} from the appendix is a crucial component of Lemma \ref{lem_locfin}). The appendix also contains bounds for $\lkor$.

\noindent {\bf Acknowledgements.} The author is grateful to her adviser, David Barrett, for suggesting this problem to her, and supporting this work with constant encouragement and timely mathematical insights. She would also like to thank Dan Burns for some very useful discussions. Lastly, the author wishes to thanks the referee for his/her detailed comments that have vastly helped improve this paper. 

\section{Preliminaries}\label{sec_Prelim}
In this article, $\pnat$ denotes the set of all positive natural numbers. For $D\subseteq\Rn$, $\cont(D)$ is the set of all continuous functions on $D$, and $\cont^k(D)$, $k\geq 1$, denotes the set of all functions that are $k$-times continuously differentiable in some open neighborhood of $D$. If $A\subset B \subset\Rn$, $\inte_BA$ is the interior of $A$ in the relative topology of $B$. The tranpsose of a vector $v$ is denoted by $v\tran$. When well defined, $\JacR f(x)$ and $\Hess_\rl f(x)$ denote the real Jacobian and Hessian matrices, respectively, of $f$ at $x$, $\JacC f(z)$ is the complex Jacobian matrix of $f$ at $z$, and $f^*$ denotes the pull-back operator induced by $f$ on differential forms and measures. For brevity, we often abbreviate $\smpartl{f}{x}$ and $\smsecpartl{f}{x}{y}$ to $f_x$ and $f_{xy}$, respectively. In $\CC$, we employ the notation
\begin{itemize}
\item $z=(z_1,z_2)=(x_1+iy_1,x_2+iy_2)$, $w=(w_1,w_2)=(u_1+iv_1,u_2+iv_2)$ for  points;
\item $\bbb_2(z;r)$ for the Euclidean ball centered at $z$ and of radius $r$;
\item $\lbr \cdot,\cdot\rbr$ for the complex pairing between a co-vector and a vector;
\item $``\ '\ "$  to indicate projection onto $\{y_2=0\}=\C\times\rl$;
\item $A	^{\operatorname{res}}$ for $(A\big|_{\{y_2=0\}})':\C\times\rl\rightarrow\C\times\rl$, where $A:\CC\rightarrow\CC$;
\item $\vol$ for the Lebesgue measure in $\CC$;
\item $\rvol$ for the Lebesgue measure in $\C\times\rl$, and
\item $s$ for the standard Euclidean surface area measure on the boundary of a smooth domain.
\end{itemize}

In our analogy between convex and complex analysis, the role of convexity is played by pseudoconvexity:
\begin{definition} A $\cont^2$-smooth domain $\Om\subset\C^d$ is called {\em strongly pseudoconvex} if it admits a defining function $\rho$ in a neighborhood $U\supset\overline\Om$ such that
	\be\label{eq_levicon}
		\sum_{1\leq j,k\leq d}\secpartl{\rho}{z_j}{\conj{z_k}}(z)v_j\conj{v_k}> 0	
		\quad\text{for}\ z\in\bdy\Om\ \text{and}\ v=(v_1,...,v_d)\in\C^d\setminus\{0\}\ \text{satisfying}\ \sum_{j=1}^d\partl{\rho}{z_j}(z)v_j=0.
	\ee 
A (possibly non-smooth) domain $\Om\subset\C^d$ is called {\em pseudoconvex} if it can be exhausted by strongly pseudoconvex domains, i.e, $\Om=\cup_{j\in\rl}\Om_j$ with each $\Om_j$ strongly pseudoconvex and $\Om_{j}\subseteq\Om_k$ for $j<k$.  
\end{definition}

\begin{rmk} We will heavily use the fact that any strongly pseudoconvex domain $\Om$ admits a defining function $\rho$ which is {\em strictly plurisubharmonic} --- i.e., \eqref{eq_levicon} holds for all $z\in U$ and $v\in\C^d\setminus\{0\}$. 
\end{rmk}

We reintroduce the polyhedral objects of our study.	
\begin{definition}\label{def_fpoly}
Let $\Om\subset\CC$ be a domain and $f\in\cont(\pkdom{\Om})$. Given a compact set $J\subset\bdy\Om$, an {\em $f$-polyhedron over $J$} is any set of the form 
	\bes
	 P=\{z\in\Om:|f(z,w^j)|>\de_j, j=1,...,n\},\ \ \  (w^j,\de_j)\in\bdy\Om\times(0,\infty),
	\ees 
such that $J\subset \bdy\Om\setminus\conj P$ and for every $j\in\{1,...,n\}$, $|f(z,w^j)|<\de_j$ for some $z\in J$. If $\Om$ is bounded, then an $f$-polyhedron over $\bdy\Om$ is simply called an {\em $f$-polyhedron}. We call 
\begin{itemize}
	\item  each $(w^j,\de_j)$ a {\em source-size pair} of $P$;
	\item each $C(w^j,\de_j;f):=\{z\in\conj\Om:|f(z,w^j)|\leq \de_j\}$ a
			 {\em cut} of $P$; 
	\item each $F(w^j,\de_j;f):=\{z\in\conj\Om:|f(z,w^j)|=\de_j, |f(z,w^l)|\geq\de_l,\ 
			l\neq j\}$ a {\em facet} of $P$;
	\item  $(w^1,...,w^n)$ and $(\de_1,...,\de_n)$ the {\em source-tuple} and {\em size-tuple} of $P$, respectively.
\end{itemize}
\end{definition}
We emphasize that, by definition, the cuts of an $f$-polyhedron over $J$ cover $J$, and each of its cuts intersects $J$ non-trivially. 
\begin{rmks} When there is no ambiguity in the choice of $f$, we drop any reference to it from our notation for cuts and facets. Repetitions are permitted when listing the sources of an $f$-polyhedron. Thus, $P$ --- as in Definition \ref{def_fpoly} --- has at most $n$ facets.  
\end{rmks}

Let $\Om$, $f$, $P$ and $J$ be as in Definition \ref{def_fpoly} above. We will use the following notation.
\begin{itemize}
	\item $\de(P):=\max\{\de_j:1\leq j\leq n\ \text{and}\ (\de_1,...,\de_n)\ \text{is the size-tuple of}\ P\}$.
	\item $\polcl{f}{n}:=$ the collection of all $f$-polyhedra in $\Om$ 
					with at most $n$ facets.
	\item $\polcl{J;f}{n}:=$ the collection of all $f$-polyhedra over $J$
					 with	at most $n$ facets.
	\item $\polcl{J\subset H;f}{n}	:=\{P\in\polcl{J;f}{n}:\bdy\Om\setminus \conj P\subset H\}$, where $H\subset\bdy\Om$ is a compact superset of $J$.
	\item $v(\Om;\mathscr{P}):=\inf
			\{\vol(\Om\setminus P):P\in\mathscr{P}\}$, for any sub-collection
			 $\mathscr{P}\subset\polcl{J\subset H;f}{n}$.
	\item  $v_n(f):=v(\Om;\polcl{J\subset H;f}{n})$, when the choice of $\Om$, $J$ and $H$ is unambiguous.
	\item  $v_n(J\subset H):=v(\Om;\polcl{J\subset H;f}{n})$, when the choice of $\Om$ and $f$ is unambiguous.
\end{itemize}

We now introduce some terminology and notation that will be used repeatedly in Section \ref{sec_localest}.
\begin{itemize}
\item Let $\rho:U\rightarrow\rl$ be $\cont^2$-smooth. The Levi polynomial associated to $\rho$ is the map $\levip_\rho:U\times U\rightarrow\C$ given by
	\bes
		\levip(z,w)=\sum_{j=1}^2\partl{\rho}{z_j}(w)(z_j-w_j)+\frac{1}{2}
					\sum_{j,k=1}^2\secpartl{\rho}{z_j}{z_k}(w)(z_j-w_j)(z_k-w_k).
	\ees
If the choice of $\rho$ is unambiguous, we will use $\levip$ instead.
\item Let $\rho:U\rightarrow\rl$ be $\cont^2$-smooth. The Cauchy-Leray map associated to $\rho$ is the map $\cler_\rho:U\times U\rightarrow\C$ given by
	\bes
		\cler_\rho(z,w)=\sum_{j=1}^2\partl{\rho}{z_j}(w)(z_j-w_j).
	\ees
\item $\mathcal{S}_\lam=\{(z_1,z_2)\in\CC:\rho^\lam(z_1,z_2)<0\}$, where $\rho^\lam(z_1,z_2)=\lam|z_1|^2-\ima z_2$. When $\lambda=1$, $\mathcal{S}_\lam=\mathcal{S}$.
\item For brevity, $\cler_\lam:=\cler_{\rho^\lam}$, and $f_\lam(z,w):=-2i\lam\cler_\lam(z,w)$ when $w\in\bdy\mathcal{S}^\lam$.
\item As defined in Theorem \ref{thm_MAIN}, for any domain $\Om\subset\CC$ and $\tau>0$, $\Om_\tau:=\{(z,w)\in\overline\Om\times\bdy\Om:||z-w||<\tau\}$. 

\end{itemize}
\section{Some Technical Lemmas}\label{sec_techlemm}
Here, we restrict our attention to Jordan measurable domains $\Om\subset\CC$. $J$ and $ H$ are compact subsets of $\bdy\Om$ such that $J\subset \inte_{\bdy\Om}H$. We will concern ourselves with $f$-polyhedra over $J$ that are constrained by $H$. We first prove a lemma that will allow us to work locally.

\begin{lemma}\label{lem_shrunbdd}Let $\Om$, $J$ and $H$ be as above. Suppose there are $\de_0>0$, $c>0$ and $f,g\in\cont(\conj\Om\times H)$ such that	 
	\begin{itemize}
		\item [$(a)$] $\{z\in\conj\Om:f(z,w)=0\}=\{z\in\conj\Om:g(z,w)=0\}=\{w\}$, for any fixed $w\in H$,  
		\item [$(b)$] $C(w,\de;f)\supseteq C(w,c\de;g)$, for all $w\in H$ and $\de<\de_0$, 
		\item[$(c)$] $C(w,\de;g)$ is Jordan measurable for each $w\in H$ and $\de<c\de_0$. 
	\end{itemize}
Then, for $P_n\in\polcl{J\subset H;f}{n}$ such that $\lim_{n\rightarrow \infty}\vol(\Om\setminus P_n)=0$, we have that $\lim_{n\rightarrow\infty}\de(P_n)= 0$. 
\end{lemma}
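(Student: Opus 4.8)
The plan is to argue by contradiction: suppose $\de(P_n)\not\to 0$, so after passing to a subsequence there is $\eps>0$ and, for each $n$, a source-size pair $(w^{k_n},\de_{k_n})$ of $P_n$ with $\de_{k_n}\geq\eps$. The key point is that such a cut is ``large'' — it is an obstruction to $P_n$ that does not shrink — and yet it must be accounted for by $\Om\setminus P_n$, whose volume is going to $0$; this is the contradiction we seek. First I would use hypothesis $(b)$: since $\de_{k_n}\geq\eps$, we may as well assume $\eps<\de_0$, and then $C(w^{k_n},\eps;f)\supseteq C(w^{k_n},c\eps;g)$, so the set $C(w^{k_n},c\eps;g)\cap\Om$ lies in $\Om\setminus P_n$. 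Thus it suffices to show $\vol\bigl(C(w^{k_n},c\eps;g)\cap\Om\bigr)$ is bounded below by a positive constant independent of $n$.

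Next I would extract a convergent subsequence $w^{k_n}\to w^*\in H$ (compactness of $H$). The natural candidate for the lower bound is $\vol\bigl(C(w^*,c\eps/2;g)\cap\Om\bigr)$, say. To make this work I need two things: first, that this limiting set has positive volume — this follows because $g(\cdot,w^*)$ is continuous and vanishes only at $w^*$ by $(a)$, so $\{z\in\conj\Om:|g(z,w^*)|<c\eps/2\}$ is a nonempty open subset of $\conj\Om$ containing $w^*$ in its closure, and since $\Om$ is Jordan measurable (in particular $\conj\Om=\conj{\inte\Om}$ near boundary points, modulo checking $w^*$ is a genuine boundary point where $\Om$ accumulates), its intersection with $\Om$ has positive Lebesgue measure; second, a semicontinuity statement saying that for $n$ large, $C(w^{k_n},c\eps;g)\cap\Om\supseteq C(w^*,c\eps/2;g)\cap\Om$ up to a set of small measure, or at least that the volumes converge appropriately. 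The cleanest route is: by uniform continuity of $g$ on the compact set $\conj\Om\times H$, for $n$ large we have $|g(z,w^{k_n})-g(z,w^*)|<c\eps/2$ for all $z\in\conj\Om$, whence $\{|g(\cdot,w^*)|<c\eps/2\}\subseteq\{|g(\cdot,w^{k_n})|<c\eps\}\subseteq C(w^{k_n},c\eps;g)$. Intersecting with $\Om$ gives the desired uniform lower bound $\vol\bigl(C(w^{k_n},c\eps;g)\cap\Om\bigr)\geq\vol\bigl(\{z\in\Om:|g(z,w^*)|<c\eps/2\}\bigr)=:m>0$, contradicting $\vol(\Om\setminus P_n)\to 0$.

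The step I expect to be the main obstacle is verifying that the limiting open set genuinely meets $\Om$ in positive volume — i.e., ruling out the degenerate possibility that $\{z\in\conj\Om:|g(z,w^*)|<c\eps/2\}$ touches $\Om$ only in a null set. This is where the hypotheses must be used carefully: condition $(a)$ forces the zero set of $g(\cdot,w^*)$ to be just $\{w^*\}$, so the sublevel set is a nondegenerate open neighborhood of $w^*$ relative to $\conj\Om$; since $w^*\in H\subset\bdy\Om$ and $\Om$ is a domain (open and connected), any neighborhood of $w^*$ intersects $\Om$ in a nonempty open set, which has positive volume. Condition $(c)$ (Jordan measurability of the $g$-cuts) is what guarantees these sets have well-defined volume and lets us pass freely between the cut and its interior. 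One should also handle the bookkeeping that the required source $(w^{k_n},\de_{k_n})$ with $\de_{k_n}\geq\eps$ may be chosen with $\de_{k_n}\in[\eps,\de(P_n)]$ and, if $\de(P_n)$ itself is unbounded, simply truncate: replacing $\de_{k_n}$ by $\min(\de_{k_n},\de_0/2)$ only enlarges the complement $C(w^{k_n},\de_{k_n};f)$, so the argument is unaffected. Assembling these pieces yields $\de(P_n)\to 0$.
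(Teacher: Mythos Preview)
Your argument is correct and takes a genuinely different route from the paper's. The paper reduces to showing that $w\mapsto\vol(C(w,\de;g))$ is continuous on the compact set $H$: it fixes $\de<c\de_0$, observes that the indicator functions $\chi_{C(w^n,\de;g)}$ converge to $\chi_{C(w,\de;g)}$ almost everywhere (the exceptional set being contained in $\bdy C(w,\de;g)$, a null set by hypothesis~$(c)$), and applies dominated convergence. You instead extract a limit point $w^*$, use the uniform continuity of $g$ on $\conj\Om\times H$ to obtain the nesting $\{|g(\cdot,w^*)|<c\eps/2\}\subseteq\{|g(\cdot,w^{k_n})|\le c\eps\}$ for large $n$, and then observe that the left side meets $\Om$ in a nonempty open set. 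Your approach is more elementary and, notably, does not actually require condition~$(c)$: you never need the boundary of a $g$-cut to be null, only that an \emph{open} sublevel set of $|g(\cdot,w^*)|$ has positive volume. The paper's route, on the other hand, yields the slightly stronger intermediate statement that the cut-volume depends continuously on the source. One small slip: in your final paragraph, replacing $\de_{k_n}$ by $\min(\de_{k_n},\de_0/2)$ \emph{shrinks} the cut rather than enlarging it; but this is harmless, since your argument only ever uses the cut at the fixed level $\eps<\de_0$, which is automatically contained in $C(w^{k_n},\de_{k_n};f)$.
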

\begin{proof} It suffices to show that for each $\de<\de_0$, there is a $b>0$ such that $\vol(C(w,\de;f))>b$ for all $w\in H$. By condition $(b)$, it is enough to show this for the cuts of $g$. By $(a)$, $\vol\big(C(w,\de;g)\big)>0$ for each $w\in H$ and $\de<c\de_0$. Thus, it is enough to prove the continuity of $w\mapsto\vol\big(C(w,\de;g)\big)$, $\de<c\de_0$, on the compact set $H$.

Fix a $\de\in(0,c\de_0)$. Let $\chi_w:=\chi_{_{C(w,\de;g)}}$, where $\chi_{_A}$ denotes the indicator function of $A$. For a given $w\in H$, consider a sequence of points $\{w^n\}_{n\in\N}\subset H$ that converges to $w$ as $n\rightarrow \infty$. Then, 
	\be\label{eq_a.e.conv}
	\lim_{n\rightarrow\infty}\chi_{w^n}(z)=\chi_w(z)\hspace{2em} 
	\text{for a.e.}\ z\in\conj{\Om}.
	\ee
To see this, consider a $z\in\conj{\Om}$ such that $\chi_w(z)=0$. Suppose, there is a subsequence $\{w^{n_j}\}_{j\in\N}\subset\{w^n\}_{n\in\N}$ such that $\chi_{w^{n_j}}(z)=1$. Then, $|g(z,w^{n_j})|\leq \de$ but $\lim_{j\rightarrow\infty}|g(z,w^{n_j})|=|g(z,w)|\geq \de$. This is only possible if $g(z,w)=\de$. An analogous argument holds if $\chi_w(z)=1$. Thus, $z\in\bdy C(w,\de;g)$. Due to assumption $(c)$, this is a null set. Thus, \eqref{eq_a.e.conv} is true and we invoke Lebesgue's dominated convergence theorem to conclude that 	
	\bes
		\vol\big(C(w^n,\de;g)\big)=\int_{\conj\Om} \chi_{w^n}d\omega\xrightarrow{n\rightarrow\infty} 
		\int_{\conj\Om}\chi_w d\omega=\vol\big(C(w,\de;g)\big), 
	\ees
where $\de<c\de_0$ and $\omega=\vol$ is the Lebesgue measure on $\CC$.  
\end{proof}

Next, we prove a lemma that permits us to concentrate on a single representative of $\operatorname{LP}(\Om)$. 

\begin{lemma}\label{lem_main} 
Let $\Om$, $J$ and $H$ be as above. Suppose $f,g\in\cont(\conj\Om\times H)$ are such that
	\begin{itemize}
		\item [$(i)$]  $\{z\in\conj\Om:f(z,w)=0\}=\{z\in\conj\Om:g(z,w)=0\}=\{w\}$, for any fixed $w\in H$, and 
		\item [$(ii)$] there exist constants $\eps\in(0,1/3)$ and $\tau>0$, such 
							that
							\be\label{ineq_f-g}
								|f(z,w)-g(z,w)|
								\leq \eps(|g(z,w)+|f(z,w)|)
							\ee
				on $\{(z,w)\in\conj\Om\times H:||z-w||\leq\tau\}$. 
	\end{itemize}
Further, assume that the cuts of $g$ are Jordan measurable and satisfy a doubling property as follows 
				\begin{itemize}
					\item[\refstepcounter{equation}(\theequation)\label{eq_doub}] there is a $\de_g>0$ and a continuous $D:[0,16]\rightarrow\rl$ so that, for any 
					$n\in\pnat$, $(w^j,\de_j)\in H\times (0, \de_g),\ 1\leq j\leq n$, and $t\in[0,16]$, 
						\bes
							\vol\left(\bigcup_{j=1}^nC(w^j,(1+t)\de_j)\right)
							\leq D(t)\cdot \vol\left(\bigcup_{j=1}^nC(w^j,\de_j) 
							\right).
						\ees
			\end{itemize}
Then, for every $\beta>0$,
	\bea
		\limsup\limits_{n\rightarrow\infty} n^\beta v_n(f)
	&\leq& D\left(\frac{(1+\eps)^2}{(1-\eps)^2}-1\right)
		\limsup\limits_{n\rightarrow\infty} n^\beta v_n(g);\label{eq_f<=g}	\\
		\liminf\limits_{n\rightarrow\infty}n^\beta v_n(f)
	&\geq&D\left(\frac{(1+\eps)^4}{(1-\eps)^4}-1\right)^{-1}
		\liminf\limits_{n\rightarrow\infty}n^\beta v_n(g),
			\label{eq_f>=g}
	\eea
where $v_n(h)=v\left(\Om;\polcl{J\subset H;h}{n}\right)$, $D_1(\eps)=$ and $D_2(\eps)$.
\end{lemma}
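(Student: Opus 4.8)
The plan is to compare $f$-polyhedra and $g$-polyhedra by comparing their cuts, exploiting the quantitative closeness \eqref{ineq_f-g}. The first observation is that \eqref{ineq_f-g} can be rewritten as a two-sided bound on $|f|/|g|$: rearranging $|f-g|\le\eps(|g|+|f|)$ (where I read $|g(z,w)|$ for the typographically garbled $|g(z,w)$) gives, on the set $\{\|z-w\|\le\tau\}$,
\[
\frac{1-\eps}{1+\eps}\,|g(z,w)|\ \le\ |f(z,w)|\ \le\ \frac{1+\eps}{1-\eps}\,|g(z,w)|.
\]
Consequently, for each source $w$ and size $\de$ small enough that the cut is confined to $\{\|z-w\|\le\tau\}$ (which holds for $\de$ below some threshold, by condition $(i)$ and continuity/compactness — this uses Lemma \ref{lem_shrunbdd}-type reasoning, or rather just that $f(\cdot,w)$ vanishes only at $w$), one has the inclusions
\[
C\!\left(w,\tfrac{1-\eps}{1+\eps}\de;g\right)\ \subseteq\ C(w,\de;f)\ \subseteq\ C\!\left(w,\tfrac{1+\eps}{1-\eps}\de;g\right).
\]
So a cut of $f$ of size $\de$ is sandwiched between two cuts of $g$ whose sizes differ by the factor $\big(\tfrac{1+\eps}{1-\eps}\big)^2$.

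Next I would use these inclusions to transfer polyhedra. For \eqref{eq_f<=g}: take a near-optimal $g$-polyhedron $Q_n\in\polcl{J\subset H;g}{n}$ with $\vol(\Om\setminus Q_n)$ close to $v_n(g)$, and replace each cut $C(w^j,\de_j;g)$ of $Q_n$ by the $f$-cut $C(w^j,\de_j';f)$ with $\de_j'$ chosen so that $C(w^j,\de_j';f)\subseteq C(w^j,\de_j;g)$ — by the left inclusion above (applied with roles reversed) this works with $\de_j' = \tfrac{1-\eps}{1+\eps}\de_j$, once sizes are small, which is guaranteed for large $n$ by Lemma \ref{lem_shrunbdd} since $\vol(\Om\setminus Q_n)\to 0$. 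The resulting $f$-polyhedron $P_n$ satisfies $\Om\setminus P_n \subseteq \bigcup_j C(w^j,\de_j';f)\subseteq\bigcup_j C(w^j,\de_j;g)$... but that is the wrong direction; I actually want $P_n\supseteq Q_n$ up to controlled error. The correct move is the opposite: enlarge — set $\de_j' = \tfrac{1+\eps}{1-\eps}\de_j$ so that $C(w^j,\de_j;g)\subseteq C(w^j,\de_j';f)$, wait, that also goes the wrong way for volume. Let me restate cleanly: to bound $v_n(f)$ above I need an $f$-polyhedron whose complement is small; I build it so that $\Om\setminus P_n$ is \emph{contained} in a union of $g$-cuts of controlled size. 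Choosing the $f$-cut sizes as $\de_j' := \tfrac{1-\eps}{1+\eps}\,\de_j$ gives $C(w^j,\de_j';f)\subseteq C(w^j,\de_j;g)$ — no: the inclusion I derived is $C(w,\tfrac{1-\eps}{1+\eps}\de;g)\subseteq C(w,\de;f)$, equivalently $C(w,\de;f)\supseteq C(w,\tfrac{1-\eps}{1+\eps}\de;g)$, and also $C(w,\de;f)\subseteq C(w,\tfrac{1+\eps}{1-\eps}\de;g)$. So if $Q_n$ uses $g$-cuts of size $\de_j$, then the $f$-polyhedron with the \emph{same} sources and sizes $\de_j$ has $\Om\setminus P_n=\bigcup_j\{|f(\cdot,w^j)|\le\de_j\}\cap\Om \subseteq \bigcup_j C(w^j,\tfrac{1+\eps}{1-\eps}\de_j;g)\cap\Om$, and now the doubling property \eqref{eq_doub} with $1+t=\big(\tfrac{1+\eps}{1-\eps}\big)^2$, i.e. $t=\tfrac{(1+\eps)^2}{(1-\eps)^2}-1\in[0,16]$ for $\eps<1/3$ (since then the ratio is below $(2)^2\cdot$ something — one checks $\tfrac{(4/3)^2}{(2/3)^2}-1 = 4-1=3\le 16$, and it is increasing in $\eps$ only up to $1/3$ where it equals $3$, well within $[0,16]$), bounds $\vol(\bigcup_j C(w^j,\tfrac{1+\eps}{1-\eps}\de_j;g))\le D(t)\,\vol(\bigcup_j C(w^j,\de_j;g))$, wait the doubling is stated as $\vol(\bigcup(1+t)\de_j\text{-cuts})\le D(t)\vol(\bigcup\de_j\text{-cuts})$, so with the single scaling $\tfrac{1+\eps}{1-\eps}$ I'd use $1+t=\tfrac{1+\eps}{1-\eps}$. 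The factor $\big(\tfrac{1+\eps}{1-\eps}\big)^2$ in the statement suggests one applies the sandwich on \emph{both} sides (once to pass $g\to f$, once more inside an optimization over $f$-polyhedra), so I would: start from an optimal $g$-polyhedron, shrink its cuts by $\tfrac{1-\eps}{1+\eps}$ to get a genuine $f$-polyhedron $P_n$ (valid $f$-polyhedron over $J$ — need to recheck the defining conditions of Definition \ref{def_fpoly}, namely that $J\subset\bdy\Om\setminus\conj{P_n}$ and each cut meets $J$; these are inherited because shrinking only enlarges $\bdy\Om\setminus\conj{P_n}$, and cuts meeting $J$ is preserved since the $g$-cut met $J$ and... this needs the reverse inclusion — here is where a little care with the definitions is needed), whose complement sits inside the union of the original $g$-cuts enlarged by $\tfrac{1+\eps}{1-\eps}\cdot\tfrac{1+\eps}{1-\eps}$, then apply \eqref{eq_doub}. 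Multiply through by $n^\beta$ and take $\limsup$.

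For \eqref{eq_f>=g} the argument is symmetric but one pays the doubling factor twice more (hence the fourth power $\tfrac{(1+\eps)^4}{(1-\eps)^4}$): given a near-optimal $f$-polyhedron, produce a $g$-polyhedron by shrinking $f$-cuts to $g$-cuts, bound $v_n(g)$ from above in terms of $v_n(f)$ via \eqref{eq_doub}, then invert the inequality to get the stated lower bound for $v_n(f)$ in terms of $v_n(g)$, which accounts for the reciprocal $D(\cdot)^{-1}$. In both directions the only subtlety beyond bookkeeping is making sure the constructed sets are legitimate members of $\polcl{J\subset H;\cdot}{n}$: the constraint $\bdy\Om\setminus\conj P\subset H$ and the "every cut meets $J$" requirement must be checked, but both are monotone under the cut-size changes we perform (shrinking a cut can only help with the first and, combined with the sandwich inclusions, preserves intersection with $J$), and the Jordan-measurability and doubling hypotheses on $g$'s cuts are exactly what license the volume manipulations. \textbf{Main obstacle.} The genuinely delicate point is the very last clause of the statement, which as printed reads ``$D_1(\eps)=$ and $D_2(\eps)$'' — evidently a typo/leftover, and I would simply drop it (or interpret $D_1(\eps):=D\big(\tfrac{(1+\eps)^2}{(1-\eps)^2}-1\big)$ and $D_2(\eps):=D\big(\tfrac{(1+\eps)^4}{(1-\eps)^4}-1\big)$ as abbreviations). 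Apart from that, the main care-point is the \emph{uniformity}: Lemma \ref{lem_shrunbdd} is needed to guarantee $\de(P_n)\to 0$ so that all cuts involved eventually lie in the regime $\{\|z-w\|\le\tau\}$ where \eqref{ineq_f-g} applies and where $\de_j<\de_g$ so that \eqref{eq_doub} applies; verifying that this holds for the \emph{comparison} polyhedra (not just the original ones) is where one must be attentive, but it follows because $\vol(\Om\setminus(\text{comparison polyhedron}))\le \text{const}\cdot\vol(\Om\setminus(\text{original}))\to 0$ and the comparison cuts are bounded between two cuts of the original.
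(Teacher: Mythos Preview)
Your approach is exactly the paper's: rewrite \eqref{ineq_f-g} as $|f|\le\hat\eps|g|$ and $|g|\le\hat\eps|f|$ with $\hat\eps=\tfrac{1+\eps}{1-\eps}$, deduce the sandwich $C(w,\de/\hat\eps;g)\subseteq C(w,\de;f)\subseteq C(w,\hat\eps\de;g)$ once cuts are small (via Lemma~\ref{lem_shrunbdd}), transfer polyhedra, and apply \eqref{eq_doub}. Two places in your write-up are off and worth correcting.

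\emph{Upper bound: you must enlarge, not shrink.} If $Q_n$ is a near-optimal $g$-polyhedron with cuts $C(w^j,\de_j;g)$, taking $f$-cuts of the \emph{same} size $\de_j$ does not guarantee $J\subset\bigcup_j C(w^j,\de_j;f)$, so the resulting set need not lie in $\polcl{J\subset H;f}{n}$. The paper instead takes $f$-cuts of size $\hat\eps\de_j$: then $C(w^j,\de_j;g)\subseteq C(w^j,\hat\eps\de_j;f)$ ensures coverage of $J$ (and containment in $H$), while $C(w^j,\hat\eps\de_j;f)\subseteq C(w^j,\hat\eps^2\de_j;g)$ lets you apply \eqref{eq_doub} with $1+t=\hat\eps^2$. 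This is precisely why the exponent is $2$ and not $1$ in \eqref{eq_f<=g}; your ``shrink by $\tfrac{1-\eps}{1+\eps}$'' goes the wrong way and your monotonicity remark (``shrinking a cut can only help\ldots'') is backwards for the condition $J\subset\bdy\Om\setminus\overline P$.

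\emph{Lower bound: the fourth power is not just ``symmetry paying twice''.} The asymmetry comes from the fact that \eqref{eq_doub} is assumed only for $g$-cuts. Starting from a near-optimal $f$-polyhedron $R_n$ with cuts $C(w^j,\de_j;f)$, you build the $g$-polyhedron $S_n$ with cuts $C(w^j,\hat\eps\de_j;g)\supseteq C(w^j,\de_j;f)$ (so $S_n\in\polcl{J\subset H;g}{n}$), and bound $\vol(\Om\setminus S_n)$ by sandwiching it between unions of $g$-cuts: $\bigcup_j C(w^j,\de_j/\hat\eps;g)\subseteq \Om\setminus R_n$ below and $\Om\setminus S_n\subseteq\bigcup_j C(w^j,\hat\eps^3\de_j;g)$ above (the latter via $C(w^j,\hat\eps\de_j;g)\subseteq C(w^j,\hat\eps^2\de_j;f)\subseteq C(w^j,\hat\eps^3\de_j;g)$). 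The ratio of these $g$-sizes is $\hat\eps^4$, hence $D(\hat\eps^4-1)$ in \eqref{eq_f>=g}.
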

\begin{proof} Observe that if $\heps:=\frac{1+\eps}{1-\eps}$, then inequality \eqref{ineq_f-g} may be transcribed as 
	\be\label{ineq_f-gnew} 
		|f(z,w)|\leq \hat\eps|g(z,w)|\ \text{and}\ 
		|g(z,w)|\leq \hat\eps |f(z,w)|
	\ee
 on  $\{(z,w)\in\conj\Om\times H:||z-w||\leq\tau\}$. Hence, for any $w\in H$ and $\de>0$,
	\bea 
		C(w,\de;f)\subseteq \bbb_2(w;\tau)&\Rightarrow& 
				C(w,\de;f)\subseteq
				C\left(w,\hat\eps\de;g\right); 
						\label{fing}\\
		C(w,\de;g)\subseteq \bbb_2(w;\tau)&\Rightarrow& 
				C(w,\de;g)\subseteq 
				C\left(w,\hat\eps\de;f\right).
						\label{ginf}
	\eea

We first show that
	\bes 	
		\limsup\limits_{n\rightarrow\infty}n^\beta v_n(f)
		\leq
		D\left(\frac{(1+\eps)^2}{(1-\eps)^2}-1\right)
		\limsup\limits_{n\rightarrow\infty}n^\beta v_n(g).
	\ees
Let $\xi>1$. Assume that $L_{\sup}:=\limsup_{n\rightarrow\infty}n^\beta v_n(g)$ is finite. Then, there is an $n_\xi\in\pnat$ such that for each $n\geq n_\xi$, we can pick a $Q_n\in\polcl{J\subset H;g}{n}$ satisfying
	\be\label{ineq_gcutupp}
		\vol(\Om\setminus Q_n)\leq \xi L_{\sup} n^{-\beta}.
	\ee
As the cuts of $g$ are Jordan measurable, Lemma \ref{lem_shrunbdd} implies that $\de(Q_n)\rightarrow 0$ as $n\rightarrow\infty$. Consequently, $n_\xi$ can be chosen so that \eqref{ineq_gcutupp} continues to hold, and for all source-size pairs $(w,\de)$ of $Q_n$, $n\geq n_\xi$, we have that 
	\begin{itemize}
		\item [$(a)$] $\de<\de_g$ (see condition \eqref{eq_doub} on $g$);
		\item [$(b)$] $C(w,\de;g)\subset \bbb_2(w;\tau)$ 
							and $C(w,4\de;g)\cap \bdy\Om\subset H$; and 
		\item [$(c)$] $C(w,2\de;f)\subset \bbb_2(w;\tau)$. 
	\end{itemize}
The second part of $(b)$ is possible as each cut of $Q_n$ is compelled to intersect $J$ non-trivially, by definition. For a fixed source-size pair $(w,\de)$ of $Q_n$, we have, due to \eqref{ginf} and \eqref{fing},
	\bes\label{eq_incI}
		C(w,\de;g)\subseteq C(w,\heps\de;f)\subseteq C\left(w,\heps^2\de;g\right).  
	\ees
The second inclusion is valid as $\heps\de\leq 2\de$, thus permitting the use of \eqref{fing}, given  $(c)$. 

We can now approximate $Q_n$ by an $f$-polyhedron by setting
	\beas	
		\wt{Q_n}&:=&\left\{z\in\Om:
					|g(z,w)|>\heps^2\de, (w,\de)
					\ \text{is a source-size pair of}
				\ Q_n\right\};\\
		P_n&:=&\{z\in\Om:|f(z,w)|>\heps\de,(w,\de)
				\ \text{is a source-size pair of}\ Q_n\}.
	\eeas
Our assumptions imply that $\wt{Q_n}$ and $P_n$ are in $\polcl{J\subset H;g}{n}$ and $\polcl{J\subset H;f}{n}$, respectively. From the above inclusions, we have that $\wt{Q_n}\subseteq P_n\subseteq Q_n$, $n\geq n_\xi$. Hence, by property \eqref{eq_doub} of $g$ and \eqref{ineq_gcutupp}, we see that 
	\beas	
		n^\beta v_n(f)
		\leq n^\beta \vol(\Om\setminus P_n) 
		&\leq& n^\beta \vol\big(\Om\setminus \wt{Q_n}\big)\\ 
		&\leq & D\big(\heps^2-1\big) n^\beta \vol(\Om\setminus Q_n)
					\\
		&\leq& \xi D\big(\heps^2-1\big)L_{\sup},
	\eeas
for $n\geq n_\xi$. As $\xi>0$ was arbitrary and $\heps=\frac{1+\eps}{1-\eps}$, \eqref{eq_f<=g} follows.

To complete this proof, we show that
	\bes 
		\liminf\limits_{n\rightarrow\infty}n^\beta v_n(f)
		\geq
		D\left(\frac{(1+\eps)^4}{(1-\eps)^4}-1\right)^{-1}
		\liminf\limits_{n\rightarrow\infty}n^\beta v_n(g).
	\ees
For this, fix a $\xi>1$, and assume that $L_{\inf}:=\liminf_{n\rightarrow\infty}n^\beta v_n(g)$ is finite. 
Thus, there is an $n_\xi\in\pnat$ such that 
	\be\label{ineq_gcutlow}
		v_n(g)\geq \frac{1}{\xi} L_{\inf} n^{-\beta};\ 
			\text{for}\ n\geq n_\xi.
	\ee
For each $n$, we pick an $R_n\in\polcl{J\subset H;f}{n}$ that satisfies
	\be\label{ineq_fglbi}
		v(\Om\setminus R_n)\leq \xi v_n(f).
	\ee
Now, we may also assume that $\liminf_{n\rightarrow\infty}n^\beta v_n(f)<\infty$ (else, there is nothing to prove), thus obtaining that $v_n(f)\rightarrow 0$ for infinitely many $n\in\pnat$. But, as $v_n(f)$ is decreasing in $n$, we get that  $v_n(f)\rightarrow 0$ for all $n\in\pnat$. Now, due to \eqref{ginf}, it is possible to choose $\de$ small enough so that
	\bes
		C\left(w,\frac{\de}{\heps};g\right)\subseteq C(w,\de;f),
	\ees
for each $w\in H$. As the cuts of $g$ are Jordan measurable (there is no such assumption on the cuts of $f$), we invoke Lemma \ref{lem_shrunbdd} to conclude that $\de(R_n)\rightarrow 0$ as $n\rightarrow\infty$. As before, we find a new $n_\xi$ such that \eqref{ineq_gcutlow} continues to hold, and for all $n\geq n_\xi$ and all source-size pairs $(w,\de)$ of $R_n$, we have  
	\begin{itemize}
			\item[$(a')$]  $\de<\de_g$ (see condition \eqref{eq_doub} on $g$); 
			\item[$(b')$]  $C(w,4\de;f)\subset \bbb_2(w;\tau)$ and
								$C(w,4\de;f)\cap\bdy\Om\subset H$; and
			\item[$(c')$] 	$C(w,2\de;g)\subset \bbb_2(w;\tau)$.
	\end{itemize}
Then, as before
	\be\label{eq_incII}
 		C\left(w,\frac{\de}{\heps};g\right)\subseteq C(w,\de;f)
		\subseteq	C\left(w,\heps{\de};g\right)
		\subseteq C\left(w,\heps^2\de;f\right)
		\subseteq C\left(w,\heps^3\de;g\right).
	\ee

We now approximate $R_n$ with an $n$-faceted $g$-polyhedron, using
		\beas	
		\wt{R_n}:& = &\left\{z\in\Om:|f(z,w)|>\heps^2\de, (w,\de)
					\ \text{is a source-size pair of}\ R_n\right\};\\
		S_n:& =& \left\{z\in\Om:|g(z,w)|>\heps{\de}, (w,\de)
					\ \text{is a source-size pair of}\ R_n\right\}.
	\eeas
Our assumptions are designed to ensure that $\wt{R_n}\in\polcl{J\subset H;f}{n}$ and $S_n\in\polcl{J\subset H;g}{n}$. From the above inclusions, we have that
	\bes
		\wt{R_n}\subseteq S_n\subseteq R_n,\ n\geq n_\xi.
	\ees
Moreover, the first and last inclusions in \eqref{eq_incII} and the assumption \eqref{eq_doub} on $g$ (note that $\heps^4<16)$ imply that
	\bea\label{ineq_fglbii}
		&&\vol\big(\Om\setminus \wt{R_n}\big)-\vol(\Om\setminus R_n)\notag \\
		&\leq& 
			\vol\left(\bigcup_{(w,\de)\in\Lambda_n}
			C\left(w,\heps^3\de;g\right)
				-\bigcup_{(w,\de)\in\Lambda_n}
			C\left(w,\frac{\de}{\heps};g\right)\right)\notag \\
			&\leq& D\big(\heps^4-1\big)\vol(\Om\setminus R_n),
	\eea
where $\Lambda_n$ is the set of source-size pairs of $R_n$. 
		
Therefore, using \eqref{ineq_fglbii} and \eqref{ineq_fglbi}, we see that  
	\beas	
		\frac{1}{\xi} L_{\inf}n^{-\beta} \leq v_n(g)
		&\leq&  \vol(\Om\setminus S_n)\leq \vol\big(\Om\setminus \wt{R_n}\big)\\
		&\leq&D\big(\heps^4-1\big)\vol(\Om\setminus R_n)\\
		&\leq& D\big(\heps^4-1\big)\xi v_n(f).
	\eeas	
Therefore,
	\bes
		n^\beta v_n(f)
		\geq \xi^{-2}
		 D\big(\heps^4-1\big)^{-1} L_{\inf},
		\ \ \ n\geq n_\xi.
	\ees
As $\xi>0$ was arbitrary and $\heps=\frac{1+\eps}{1-\eps}$, \eqref{eq_f>=g} follows.
\end{proof}

\begin{remark}\label{rmk_locglob} In practice, $f$ and $g$ may only be defined on $(\conj\Om\cap U)\times H$ for some open set $U\subset\CC$ containing a $\tau$-neighborhood of $H$, while satisfying the analogous version of condition $(i)$ there. As the remaining hypothesis (and indeed the result itself) depends only on the values of $f$ and $g$ on an arbitrarily thin tubular neighborhood of $H$ in $\conj\Om$, we may replace $f$ (and, similarly, $g$) by $f_{\sf e}$ to invoke Lemma \ref{lem_main}, where 
	\bes
		f_{\sf e}:=f(z,w)\varsigma(||z-w||^2)+||z-w||^2(1-\varsigma(||z-w||^2))
	\ees  
for some non-negative $\varsigma\in\cont^\infty(\rl)$ such that $\varsigma(x)=1$ when $x\leq \tau^2/2$ and $\varsigma(x)=0$ when $x\geq \tau^2$. We will do so without comment, when necessary.  
\end{remark}

\section{Approximating Model Domains}\label{sec_approxmodel}
As a first step, we examine volume approximations of the Siegel domain by a particular class of analytic polyhedra. This problem enjoys a connection with Laguerre-type tilings of the Heisenberg surface equipped with the Kor{\'a}nyi metric (see the appendix for further details). 

Let $\mathcal{S}:=\{(z_1,x_2+iy_2)\in\CC:y_2>|z_1|^2\}$ and $f_\mathcal{S}(z,w)=z_2-\conj{w_2}-2 i z_1\conj{w_1}$. We view $\C\times\rl$ as the first Heisenberg group, $\bbh$, with group law
	\bes
		(z_1,x_2)\cdot_{\scriptscriptstyle\bbh}(w_1,u_2)=(z_1+w_1,x_2+u_2+2\ima (z_1\conj w_1))
	\ees
and the left-invariant Kor{\'a}nyi gauge metric (see \cite[Sec. 2.2]{CPDT}) 
	\bes
		d_{{\scriptscriptstyle\bbh}}((z_1,x_2),(w_1,u_2))
		:=||(w_1,u_2)^{-1}\cdot_{\scriptscriptstyle\bbh}(z_1,x_2)||_{\scriptscriptstyle\bbh},	
	\ees
where $||(z_1,x_2)||_{\scriptscriptstyle\bbh}^4:=|z_1|^4+x_2^2$. Observe that, for any cut $C(w,\de)=C(w,\de;f_\mathcal{S})$, $w\in\bdy \mathcal{S}$, $C(w,\de)'$ is the set
	\be\label{eq_proj_cuts}
		K(w',\sqrt\de)=\{(z_1,x_2)\in\C\times\rl:
		|z_1-w_1|^4+(x_2-u_2+2\ima(z_1\conj{w_1}))^2\leq\de^2\},
	\ee
which is the ball of radius $\sqrt\de$ centered at $w'$, in the Kor{\'a}nyi metric. 

\begin{ntn} We will use the following notation in this section:
	\begin{itemize}
		\item $I^r:=\{(x_1+i y_1,x_2)\in\C\times\rl:0\leq x_1\leq r,\ 0\leq y_1\leq r,\ 0\leq x_2\leq r^2\}$, $r>0$. 
		\item $\hat I^r:=I^{2r}-\left(\frac{r}{2}+i\frac{r}{2},\frac{3r^2}{2}\right)$, $r>0$. $I^r\subset\hat I^r$ and they are concentric.
		\item $v_n(J\subset H):=v(\mathcal{S};\polcl{J\subset H;f_\mathcal{S}}{n})$,
				 for $J\subset H\subset \bdy \mathcal{S}$. If $J\subset H\subset \C\times\rl$, $v_n(J\subset H)$ is meaningful in view of the obvious correspondence between $\C\times\rl$ and $\bdy \mathcal{S}$.
	\end{itemize}
\end{ntn}	

\begin{lemma}\label{lem_heis}
Let $I=I^{1}$ and $\hat I={\hat I}^{1}$. There exists a positive constant $\lkor>0$ such that 
	\bes
		v_n(I\subset \hat I)\sim \frac{\lkor}{\sqrt{n}}
	\ees
as $n\rightarrow\infty$. 
\end{lemma}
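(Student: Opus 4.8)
The goal is to establish a two-sided asymptotic $v_n(I\subset\hat I)\sim \lkor/\sqrt n$, and the natural strategy is to reduce the volume-approximation problem for the Siegel domain to a covering/tiling problem on the Heisenberg group $\bbh=\C\times\rl$. The first step is to convert the volume of the gap $\mathcal{S}\setminus P$ into an integral over $\C\times\rl$. Because $f_\mathcal{S}(z,w)=z_2-\conj{w_2}-2iz_1\conj{w_1}$ is affine in $z_2$, for a fixed point $z'=(z_1,x_2)\in\C\times\rl$ the set of $y_2$ with $(z_1,x_2+iy_2)\in\mathcal{S}\setminus C(w,\de;f_\mathcal{S})$ is an interval whose length is governed by how far $z'$ lies ``inside'' the Kor\'anyi ball $K(w',\sqrt\de)$; more precisely, one computes that the contribution of a single cut to the defect, integrated in the $y_2$ direction, is a function of the ``horizontal power'' $\hpow(z',K)$ of $z'$ relative to $K=K(w',\sqrt\de)$. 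Thus $\rvol$-integration of the pointwise $y_2$-defect over $I$ reduces $\vol(\mathcal{S}\setminus P)$, for an $f_\mathcal{S}$-polyhedron $P$ with cuts $\{K_j\}$, to (a constant times) $-\sum_j\int_{\hcell(K_j)\cap I}\hpow(z',K_j)\,dz'$ up to lower-order boundary effects coming from the constraint $J=I\subset H=\hat I$ and from the curvature of $\bdy\mathcal{S}$ over $I$ (which is genuinely flat here, since $\mathcal{S}$ is the exact model, so this term should be exactly the power-diagram functional).

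The second step is the scaling/subadditivity argument that produces the limit and the constant $\lkor$. The Siegel domain and $f_\mathcal{S}$ are invariant under the anisotropic Heisenberg dilations $\dil_r(z_1,z_2)=(rz_1,r^2z_2)$ (with the cuts transforming as Kor\'anyi balls scale), so the functional $\mathcal{E}_n(Q):=\inf\{-\sum_{K}\int_{\hcell(K)\cap Q}\hpow(\cdot,K):\#\mathscr{K}\le n\}$ over a box $Q$ behaves like $r^4$ under $\dil_r$ in the ``size'' of the defect while the box $I^r$ has $\rvol=r^4$; combined with a standard superadditivity-over-disjoint-subcubes estimate and a matching subadditivity (pasting near-optimal configurations on a grid of subcubes, absorbing the overlap/boundary cost into the $o(1)$ term using the buffer region $\hat I\setminus I$), one gets that $\sqrt n\,\mathcal{E}_n(I)$ converges to a finite positive limit. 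Positivity is clear because each cut has $\rvol(K(w',\sqrt\de))\lesssim\de^2$ and a lower bound $\gtrsim\de^2$ on its defect integral, so covering $I$ with $n$ Kor\'anyi balls forces $\sum_j\de_j^2\gtrsim 1$ while the defect is $\gtrsim\sum_j\de_j^2\gtrsim \max_j\de_j^2$-type bounds combined with the covering give the $1/\sqrt n$ rate exactly as in Gruber's Euclidean argument; finiteness comes from exhibiting an explicit near-tiling by $\sim n$ Kor\'anyi balls of radius $\sim n^{-1/4}$. This is where Lemma~\ref{lem_adm} from the appendix enters, guaranteeing that the optimizing families of Kor\'anyi balls can be taken to be ``admissible'' (locally finite, controlled overlap), so that the tiling functional is the right continuous object.

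The third step ties the combinatorial constant to the stated formula: one shows $\mathcal{E}_n(I)$ and the tiling quantity $\inf\{-\sum_{K\in\mathscr{K}}\int_{\hcell(K)}\hpow(z',K)\,dz':\#\mathscr{K}\le n\}$ over $[0,1]^3$ agree up to $o(n^{-1/2})$ — the passage from ``$f_\mathcal{S}$-polyhedron over $I$ constrained by $\hat I$'' to ``finite Kor\'anyi-ball cover of $[0,1]^3$'' is exactly the content of Definition~\ref{def_fpoly} (cuts cover $J$, each meets $J$) together with the buffer $\hat I\setminus I$ absorbing sources that lie just outside $I$ — and then $\lkor:=\lim\sqrt n\,\mathcal{E}_n(I)$ is well-defined and independent of everything, matching the displayed description of $\lkor$ in the introduction.

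\textbf{Main obstacle.} The hard part is the two matching estimates that pin the rate at exactly $1/\sqrt n$ with the \emph{same} constant on both sides — i.e., proving that the optimal $f_\mathcal{S}$-polyhedral approximation is asymptotically no better than, and no worse than, the best Kor\'anyi-ball Laguerre tiling. The upper bound (subadditivity) requires carefully pasting locally near-optimal tilings while controlling the wasted volume along the grid seams; the buffer region $\hat I\setminus I$ is what makes this legitimate, but one must check the number of ``seam'' cuts is $o(\sqrt n)$ relative to the bulk. The lower bound (superadditivity/positivity) requires a uniform geometric estimate: every cut $C(w,\de;f_\mathcal{S})$, no matter how its source sits relative to $I$, contributes a defect bounded below by a fixed multiple of $\de^2$ times the $\rvol$-measure of $K(w',\sqrt\de)\cap(\text{a fixed dilate of }I)$, so that no configuration can beat the tiling bound — this, together with an isodiametric-type inequality for Kor\'anyi balls, is the crux, and it is precisely where the admissibility lemma \ref{lem_adm} is indispensable.
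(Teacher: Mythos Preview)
Your overall architecture --- reduce to a Kor\'anyi-ball functional on $\bbh$, then run a scaling/self-similarity argument --- is the right intuition, and is indeed how the paper interprets $\lkor$ \emph{a posteriori} in the appendix. But the paper's actual proof of this lemma does not go through the $\hpow$ reduction; it works directly with the elementary volume formula $\vol(C(w,\de))=\tfrac{2\pi}{3}\de^3$ and the Kor\'anyi-ball volume $\rvol(K(w',\sqrt\de))=\tfrac{\pi^2}{2}\de^2$. The three steps are: (1) an explicit upper bound from a concrete Heisenberg tiling $\{v_{pqr}\cdot_\bbh I^{1/k}\}$ of $I$ by $\sim k^4$ translates, each covered by one Kor\'anyi ball; (2) a lower bound via the Wiener covering lemma to extract a disjoint subfamily of balls, then the power-mean (Jensen) inequality to compare $\sum\rho_j^3$ with $(\sum\rho_j^2)^{3/2}$; (3) existence of the limit by tiling $I$ with $\sim k^4$ Heisenberg-translated, dilated copies of a near-optimal $P_{n_0}$.

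There is a genuine gap in your lower bound. You write that each cut ``contributes a defect bounded below by a fixed multiple of $\de^2$ times the $\rvol$-measure of $K\cap(\text{dilate of }I)$'', but cuts overlap, so summing such individual bounds does not control $\vol(\mathcal{S}\setminus P)$ from below. What makes the argument go through is precisely the Wiener covering lemma (extract disjoint $K_1,\dots,K_k$ whose triples still cover) combined with Jensen in the form $(\sum\rho_j^6)k^{1/2}\ge(\sum\rho_j^4)^{3/2}$; neither appears in your sketch. Also, your claim that ``the number of seam cuts is $o(\sqrt n)$'' is both unnecessary and numerically off: the paper's tiling is seamless by construction (the Heisenberg translates $E_{pqr}$ fit exactly), so no seam estimate is needed.

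Finally, Lemma~\ref{lem_adm} plays no role here. That lemma establishes the doubling property $D(t)=(1+t)^3$ for unions of $f_{\mathcal{S}_\lam}$-cuts, and is invoked only later (in Lemma~\ref{lem_locfin}) to verify the hypotheses of Lemma~\ref{lem_main} when transferring estimates from $\mathcal{S}_\lam$ to a general $\Om$. It is not an ``admissibility'' lemma in the sense you describe, and the proof of the present lemma is entirely self-contained.
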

\begin{proof} Simple calculations show that
	\bea
		&&\vol(C(w,\de))=\frac{2\pi}{3}\de^3\label{eq_vol}\\
		&&\rvol(K(w',\sqrt\de))=\frac{\pi^2}{2}\de^2 \label{eq_volkor}
	\eea
for all $w\in\bdy \mathcal{S}$ and $\de>0$.

We utilize a special tiling in $\C\times\rl$. Let $k\in\pnat$ and consider the following points in $\C\times\rl$: 
	\bes
		v_{pqr}:=\left(\frac{p}{k}+i\frac{q}{k},\frac{r}{k^2}\right),\ (p,q,r)\in\Sigma_k,
	\ees
where $\Sigma_k:=\left\{(p,q,r)\in\mathbb{Z}^3:-2q\leq r\leq k^2-1+2p,\ 0\leq p,q\leq k-1\right\}$. Observe that $\operatorname{card}(\Sigma_k)=k^4+2k^3-2k^2$. Now, we set $E_{pqr}:= v_{pqr}\cdot_{\scriptscriptstyle\bbh}I^{\frac{1}{k}}$, and note that 
$I\subset\cup_{\Sigma_k}E_{pqr}\subset \hat I$, for all $k\in\pnat$.

\begin{figure}[H]
\centering
\resizebox{2.2in}{!}{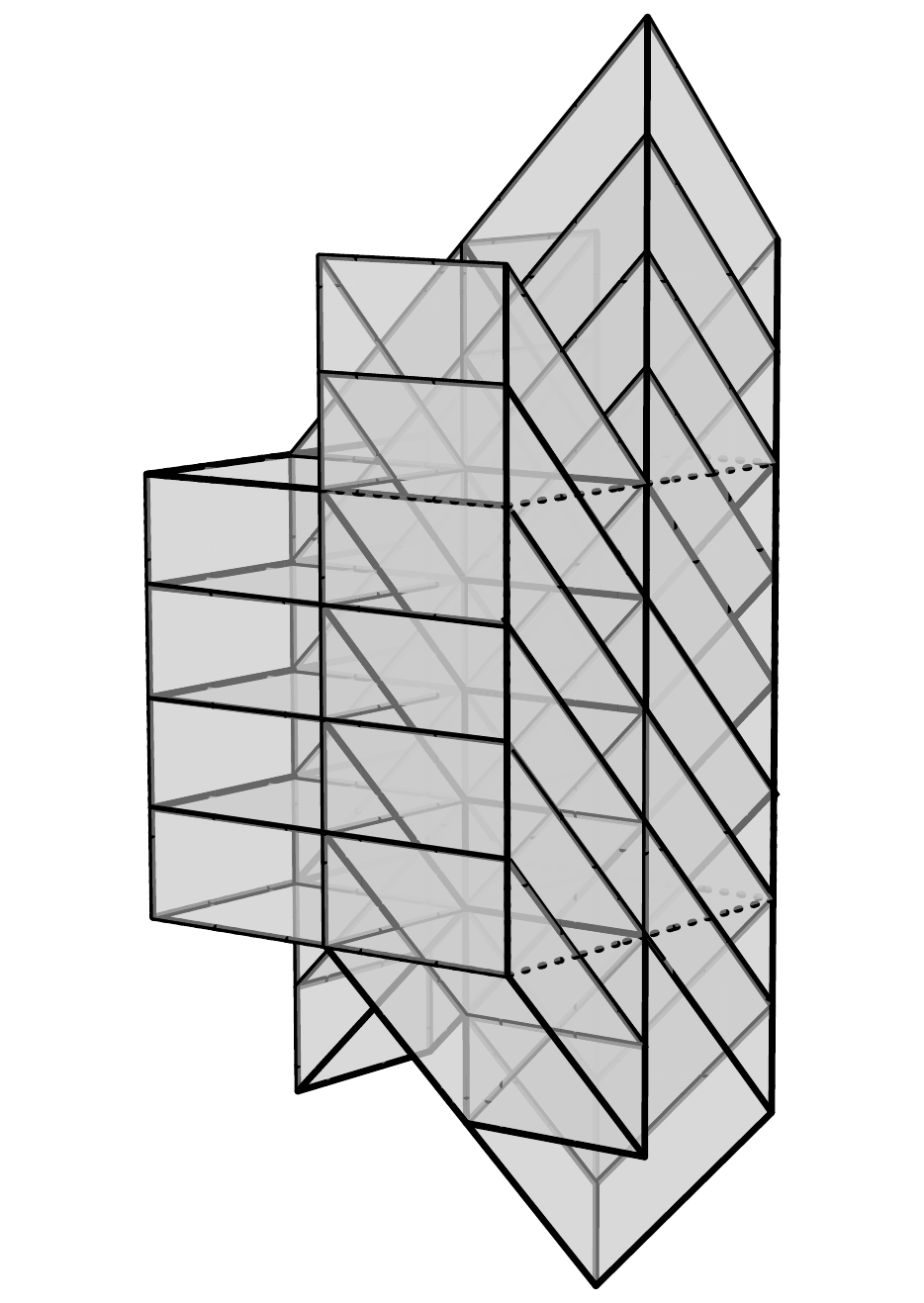}
\caption{The $24$ tiles $E_{pqr}$ when $k=2$.}
\end{figure} 

$\bm{1.}$ We first show that there is a constant $\alpha_1>0$ such that
	\be\label{heis_uppbd}
		v_n(I\subset \hat I)\leq\frac{\alpha_1}{\sqrt{n}}
	\ee
for all $n\in\pnat$.
 
For this, let 
	\bes
		u_{pqr}:=\text{center of}\ E_{pqr}=v_{pqr}\cdot_{\scriptscriptstyle\bbh}
		\left(\frac{1}{2k}+i\frac{1}{2k},\frac{1}{2k^2}\right) ,\ (p,q,r)\in\Sigma_k,\ k\in\pnat.
	\ees
Then, the Kor{\'a}nyi ball $K\left(u_{pqr},\frac{\sqrt[4]{5}}{\sqrt[4]{2}k}\right)$ (see \eqref{eq_proj_cuts}) contains $E_{pqr}$ and is contained in $\hat I$. Hence, if $w_{pqr}\in \bdy \mathcal{S}$ is such that ${w_{pqr}}'=u_{pqr}$, the cuts
	\bes
		C\left(w_{pqr},\frac{\sqrt{5}}{\sqrt{2}k^2};f_\mathcal{S}\right),\ (p,q,r)\in\Sigma_k,
	\ees
define $P_k$, an $f_\mathcal{S}$-polyhedron with $k^4+2k^3-2k^2$ facets. In fact, $P_k\in\polcl{I\subset \hat I;f_\mathcal{S}}{k^4+2k^3-2k^2}$, for all $k\in\pnat$, where we identify $I$ and $\hat I$ with their images in $\bdy S$ under the map $(z_1,x_2)\mapsto (z_1,x_2+i|z_1|^2)$. Therefore, using \eqref{eq_vol}
	\beas
		v_{k^4+2k^3-2k^2}(I\subset\hat I)&\leq& \vol(\mathcal{S}\setminus P_k)\\
		&\leq& 
		\vol\left(\bigcup_{\Sigma_k}		C\left(w_{pqr},\frac{\sqrt{5}}{\sqrt{2}k^2}\right)\right)\\
		&\leq& \frac{2\pi}{3}\left(\frac{\sqrt{5}}{\sqrt{2}k^2}\right)^3(k^4+2k^3-2k^2)
		=\frac{5\sqrt{5}\pi}{3\sqrt{2}}
		\frac{(k^4+2k^3-2k^2)}{k^6},
	\eeas
$ k\in\pnat$. Now, for a given $n\in\pnat$, choose $k$ such that $k^4+2k^3-2k^2\leq n\leq (k+1)^4+2(k+1)^3-2(k+1)^2$. Then, one can easily find a $\alpha_1>0$ such that
	\beas
		v_n(I\subset \hat I)\sqrt{n}
		&\leq&v_{k^4+2k^3-2k^2}(I\subset\hat I)\sqrt{(k+1)^4+2(k+1)^3-2(k+1)^2}\\
		&\leq&\frac{5\sqrt{5}\pi}{3\sqrt{2}}
		\frac{(k^4+2k^3-2k^2)\sqrt{(k+1)^4+2(k+1)^3-2(k+1)^2}}{k^6}\\
		&\leq& \alpha_1.
	\eeas

$\bm{2.}$ Next, we show that there is an $\alpha_2>0$ such that
	\be\label{heis_lowbd}
		v_n(I\subset \hat I)\geq\frac{\alpha_2}{\sqrt{n}}
	\ee
for $n\in\pnat$.

If finitely many Kor{\'a}nyi balls of radii $\sqrt{\rho_1},...,\sqrt{\rho_k}$ cover $I$, then \eqref{eq_volkor} yields
	\be\label{eq_covK}
		(\sqrt\rho_1)^4+\cdots+(\sqrt\rho_k)^4\geq \frac{2}{\pi^2}\rvol(I)=\frac{2}{\pi^2}.
	\ee

We will also need the following mean inequality (a consequence of Jensen's inequality)
	\be\label{eq_mean}
		\left(\frac{\rho_1^{d+1}+\cdots+\rho_k^{d+1}}{k}\right)^\frac{1}{d+1}
		\geq\left(\frac{\rho_1^{d-1}+\cdots+\rho_k^{d-1}}{k}\right)^\frac{1}{d-1},
	\ee
for positive $\rho_j$, $1\leq j\leq k$, and $d>1$. 

Now, fix a $\xi>1$. Let $P_n\in\polcl{I\subset \hat I;f_\mathcal{S}}{n}$ be such that
	\be\label{eq_lowerbound}
		\vol(\mathcal{S}\setminus P_n)\leq \xi v_n(I\subset \hat I).
	\ee
Let $C_j$ and $K_j$, $j=1,...,n$, be the cuts and their projections, respectively, of $P_n$. Now, $\K_n:=\{K_j,j=1,...,n\}$ is a finite covering of $I$, so by the Wiener covering lemma (see \cite[Lemma 4.1.1]{KrPa} for a proof that generalizes to metric spaces), we can find, on renumbering the indices, disjoint Kor{\'a}nyi balls $K_1,...,K_k\in\K_n$ of radii $\sqrt{\rho_1},...,\sqrt{\rho_k}$, such that $\cup_{K\in\K_n}K\subset\cup_{1\leq j\leq k}3 K_j$, where, for $j=1,...,k$, $3 K_j$ has the same centre as $K_j$ but thrice its radius. Let $C_j$ denote the cut that projects to $K_j$, $j=1,...,k$. It follows from \eqref{eq_lowerbound}, \eqref{eq_vol} and the inequalities \eqref{eq_mean} (for $d=5$) and \eqref{eq_covK} that
	\beas
		v_n(I\subset \hat I)\sqrt{n}&\geq& \frac{1}{\xi} \vol\left(\bigcup_{j=1}^k C_j\right)\sqrt{k}\\
		&=& \frac{1}{\xi} \left(\sum_{i=1}^k\vol(C_j)\right)\sqrt{k}
		= \frac{2\pi}{3\xi}\left(\rho_1^3+\cdots+\rho_k^3\right)\sqrt{k}\\
		&=& \frac{2\pi}{3^7\xi}\left((9\rho_1)^3+\cdots+(9\rho_k)^3\right)\sqrt{k}
		= \frac{2\pi}{3^7\xi}\left((3\sqrt{\rho_1})^6+\cdots+(3\sqrt{\rho_k})^6\right)
		k^{\frac{2}{4}}\\
		&\geq& \frac{2\pi}{3^7\xi}\left((3\sqrt{\rho_1})^4+\cdots+(3\sqrt{\rho_k})^4\right)^{\frac{6}{4}}\\
		&\geq& \frac{4\sqrt{2}}{\pi^2 3^7\xi}\rvol(I)^{\frac{3}{2}}
				=\frac{4\sqrt{2}}{\pi^2 3^7\xi}>0,\ 
		\text{for}\ n=n_0,n_0+1,... 		
	\eeas
As $\xi>1$ was arbitrary, we have proved \eqref{heis_lowbd}.

$\bm{3.}$ Define
	\bes
		 \lkor=
		\liminf_{n\rightarrow\infty} v_n(I\subset \hat I)\sqrt n.
	\ees
By \eqref{heis_lowbd} and \eqref{heis_uppbd}, $0<\lkor<\infty$. We now show that
	\be\label{heis_fin}
		 \lkor=\lim_{n\rightarrow\infty}
		v_n(I\subset \hat I)\sqrt{n}.
	\ee
For this, it suffices to show that for every $\xi>1$, if $n_0\in\pnat$ is chosen so that 
	\be\label{assump}
		v_{n_0}(I\subset\hat I)\sqrt{n_0}\leq \xi \lkor
	\ee
then, 
	\be\label{conc}
		v_n(I\subset \hat I)\sqrt{n}\leq \xi^4  \lkor
	\ee
for $n$ sufficiently large. 

Now, let $P_{n_0}\in\polcl{I\subset \hat I;f_\mathcal{S}}{n_0}$ be such that
	\bes
		\vol(\mathcal{S}\setminus P_{n_0})\leq \xi v_{n_0}(I\subset\hat I).
	\ees
For any $w\in\bdy \mathcal{S}$ and $k\in\pnat$, let $A_{w,k}:\CC\rightarrow\CC$ be the biholomorphism 
	\bes
(z_1,z_2)\mapsto 
	\left(w_1+\frac{1}{k}z_1,w_2+\frac{1}{k^2}z_2-\frac{2i}{k}z_1\conj{w_1}\right).
	\ees
 Then, $A_{w,k}$ has the following properties:
	\begin{itemize}
		\item $A_{w,k}^{\operatorname{res}}(z')=w'\cdot_{\scriptscriptstyle\bbh} (\frac{1}{k}z_1,
\frac{1}{k^2}x_2)$;
		\item $A_{w,k}(\mathcal{S})=\mathcal{S}$;
		\item $A_{w,k}(P_{n_0})\in\polcl{w' \cdot_{ \scriptscriptstyle\bbh} I^{\frac{1}{k}}\subset w'\cdot_{\scriptscriptstyle\bbh} \hat I^{\frac{1}{k}};f_\mathcal{S}}{n_0}$; and
		\item $\vol(\mathcal{S}\setminus A_{w,k}(P_{n_0}))\leq\xi\frac{v_{n_0}(I\subset\hat I)}{k^6}$.
	\end{itemize}
As a consequence,
	\bes
		P:=\bigcup_{(p,q,r)\in\Sigma_k} A_{{v_{pqr}},k}(P_{n_0})
	\ees
satisfies the following conditions:		
	\begin{itemize}		
		\item $P\in\polcl{I\subset \hat I;f_\mathcal{S}}{n_0(k^4+2k^3-2k^2)}$
		\item $\vol(\mathcal{S}\setminus P)\leq\xi v_{n_0}(I\subset\hat I)\frac{k^4+2k^3-2k^2}{k^6}$.
	\end{itemize}
Hence, by assumption \eqref{assump},
	\bea\label{eq_specseq}
		v_{n_0(k^4+2k^3-2k^2)}(I\subset \hat I)\sqrt{n_0(k^4+2k^3-2k^2)}
		&\leq& \xi v_{n_0}(I\subset\hat I)\sqrt{n_0}\frac{(k^4+2k^3-2k^2)^{\frac{3}{2}}}{k^6}\notag \\
		&\leq&\xi^2 v_{n_0}(I\subset\hat I)\sqrt{n_0}\leq \xi^3 \lkor,
	\eea
for sufficiently large $k$. Choose $k_0$ so that \eqref{eq_specseq} holds and $\frac{(k+1)^4+2(k+1)^3-2(k+1)^2}{k^4+2k^3-2k^2}\leq \xi^2$ for $k>k_0$. For $n\geq n_0(k_0^4+2k_0^3-2k_0^2)$, let $k$ be such that $n_0(k^4+2k^3-2k^2)\leq n\leq n_0((k+1)^4+2(k+1)^3-2(k+1)^2)$. Consequently,
	\beas
		v_n(I\subset \hat I)\sqrt{n}
		&\leq& v_{n_0(k^4+2k^3-2k^2)}(I\subset \hat I)\sqrt{n_0((k+1)^4+2(k+1)^3-2(k+1)^2)}\\
		&\leq& \xi^3\lkor\sqrt{\frac{(k+1)^4+2(k+1)^3-2(k+1)^2}{k^4+2k^3-2k^2}}\leq \xi^4  \lkor,
	\eeas
by \eqref{eq_specseq}. We have proved \eqref{conc} and, therefore, our claim \eqref{heis_fin}.
\end{proof}

Our choice of the unit square in the above lemma facilitates the computation for polyhedra lying above more general Jordan measurable sets in the boundary of $\mathcal{S}$.

\begin{lemma}\label{heis_mod} Let $J,H\subset\bdy \mathcal{S}$ be compact and Jordan measurable with $J\subset \inte_{\bdy \mathcal{S}}H$. Then
	\bes
		v_n(J\subset H) \sim \rvol(J')^{\frac{3}{2}} \lkor\frac{1}{\sqrt{n}}
	\ees
as $n\rightarrow\infty$.
\end{lemma}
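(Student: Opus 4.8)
The plan is to deduce Lemma~\ref{heis_mod} from Lemma~\ref{lem_heis} by a covering/exhaustion argument based on the translation invariance (under the maps $A_{w,k}$) and scaling properties already established in the proof of Lemma~\ref{lem_heis}. The key point is that the quantity $\rvol(\cdot')^{3/2}$ behaves, on translates of the unit square $I^{1/k}$ in $\C\times\rl$, exactly like a measure: a disjoint union of $N$ such cubes has total projected $\rvol$ equal to $N/k^4$, and summing the individual asymptotics $v_{n_0}(w'\cdot_\bbh I^{1/k}\subset w'\cdot_\bbh\hat I^{1/k})\sim \lkor (k^{-4})^{3/2}/\sqrt{n_0}$ reproduces $\rvol(J')^{3/2}\lkor/\sqrt n$ after optimizing the distribution of facets across cubes.

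First I would establish the upper bound. Fix $\xi>1$. Since $J'\subset\inte H'$ is Jordan measurable, for large $k$ there is a finite family $\Box_k$ of (translated) cubes $v\cdot_\bbh I^{1/k}$, pairwise with disjoint interiors, whose union $G_k$ satisfies $J'\subset\inte G_k$ and $G_k'\subset\inte H'$, with $\#\Box_k = N_k$ and $N_k/k^4\to\rvol(J')$ (approximating $J'$ from outside by dyadic-type cubes); one must also arrange that the slightly larger concentric cubes $v\cdot_\bbh\hat I^{1/k}$ still lie in $H'$, which is possible since $\hat I^{1/k}$ is only a bounded dilation of $I^{1/k}$ and $J'$ is compactly contained in $\inte H'$. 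By Lemma~\ref{lem_heis} (applied with the scaling map $A_{v,k}$ exactly as in Step~3 of its proof), for each cube we can pick an $f_\mathcal S$-polyhedron over $v\cdot_\bbh I^{1/k}$ constrained by $v\cdot_\bbh\hat I^{1/k}$, with $m$ facets, of volume $\le \xi\,\lkor\,k^{-6}/\sqrt m$ once $m$ is large. Distributing $n$ facets as $m\approx n/N_k$ among the $N_k$ cubes and taking the union gives a $P\in\polcl{J\subset H;f_\mathcal S}{n}$ with
\bes
	\vol(\mathcal S\setminus P)\le N_k\cdot \xi\,\lkor\,\frac{1}{k^6}\Big(\frac{N_k}{n}\Big)^{1/2}
		=\xi\,\lkor\,\frac{1}{\sqrt n}\Big(\frac{N_k}{k^4}\Big)^{3/2},
\ees
so $\limsup_n \sqrt n\, v_n(J\subset H)\le \xi\,\lkor\,(N_k/k^4)^{3/2}$; letting $k\to\infty$ and then $\xi\to 1$ yields $\limsup_n\sqrt n\,v_n(J\subset H)\le \lkor\,\rvol(J')^{3/2}$.

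For the lower bound I would run the argument in reverse, now using an \emph{inner} cube family $\Box_k'$ with $\bigcup\Box_k'\subset J'$ and $\#\Box_k'/k^4\to\rvol(J')$. Given any near-optimal $P_n\in\polcl{J\subset H;f_\mathcal S}{n}$, restricting its cuts to those relevant to each inner cube $v\cdot_\bbh I^{1/k}$ produces (after applying $A_{v,k}^{-1}$) an admissible competitor for $v_{m_i}(I\subset\hat I)$ where $m_i$ is the number of facets of $P_n$ meeting that cube; since $\sum_i m_i\le n$ and $\vol(\mathcal S\setminus P_n)\ge \sum_i \vol$(gap over cube $i$), superadditivity of $t\mapsto t^{3/2}$ together with the lower bound $v_{m_i}(I\subset\hat I)\ge (\lkor-o(1))/\sqrt{m_i}$ and the convexity inequality $\sum_i m_i^{-1/2}\ge N_k^{3/2}/(\sum_i m_i)^{1/2}$ gives $\sqrt n\,\vol(\mathcal S\setminus P_n)\gtrsim \lkor\,(\#\Box_k'/k^4)^{3/2}$; letting $k\to\infty$ finishes. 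The main obstacle is the bookkeeping in this last step: ensuring that "the cuts of $P_n$ relevant to cube $i$" genuinely form an $f_\mathcal S$-polyhedron \emph{over} $v\cdot_\bbh I^{1/k}$ in the precise sense of Definition~\ref{def_fpoly} (each such cut must intersect that sub-square non-trivially and cover it), which requires discarding cuts that are too large or too far — controlled via Lemma~\ref{lem_shrunbdd}, since $\de(P_n)\to 0$ forces every surviving cut to be localized near a single inner cube — and simultaneously keeping the facet counts $m_i$ additive up to a negligible error. Once the localization of cuts is set up cleanly, the optimization over how facets are allocated to cubes is routine (Jensen/Hölder), and the matching of constants is forced by Lemma~\ref{lem_heis}.
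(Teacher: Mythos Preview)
Your proposal is correct and follows essentially the same route as the paper: an outer covering of $J'$ by translated cubes for the upper bound (distributing facets equally), an inner packing for the lower bound (with localization of cuts via $\de(P_n)\to 0$ and the H\"older/power-mean inequality $\sum_i m_i^{-1/2}\ge N_k^{3/2}/(\sum_i m_i)^{1/2}$), both reduced to Lemma~\ref{lem_heis} via the maps $A_{v,k}$. The only cosmetic difference is that the paper allows cubes of a fixed size $r$ (upper) or varying sizes $r_j$ (lower) chosen once via Jordan measurability, rather than sending a dyadic scale $k\to\infty$; the bookkeeping concern you flag about the restricted cuts genuinely forming an $f_\mathcal{S}$-polyhedron over each inner cube is handled exactly as you anticipate.
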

\begin{proof} 
$\bm{1.}$ We first show that 
	\be\label{heis_mod_lowbd}
		\limsup_{n\rightarrow \infty}v_n(J\subset H)\sqrt{n}\leq  \lkor\rvol(J')^{\frac{3}{2}}.
	\ee

Let $\xi>1$ be fixed. As $J$ is Jordan measurable, we can find $m$ points $v^1,...,v^m\in\C\times\rl$ and some $r>0$, such that 
	\be\label{J_cov}
		J'\subset\bigcup_1^m \left(v^j\doth I^r\right)\subset\bigcup_1^m \left(v^j \doth \hat I^r\right)\subset H'
	\ee
and
	\be\label{J_vol}
		m\rvol(I^r)\leq \xi\rvol(J').
	\ee
Now, observe that 
	\be\label{dil} 
		\sqrt{k}\ \frac{v_k(v^j\doth I^r\subset  v^j \doth \hat I^r)}{\rvol(I^r)^\frac{3}{2}}
		=\sqrt{k}\ \frac{v_k(I^r\subset \hat I^r)}{{\rvol(I^r)^\frac{3}{2}}}
		=\sqrt{k}\ v_k(I\subset \hat I).
	\ee 
Thus, due to \eqref{J_cov}, Lemma \ref{lem_heis}, \eqref{dil} and \eqref{J_vol}, we have
	\bea
		v_{km}(J\subset H)\sqrt{km} &\leq & \sum_{j=1}^{m}
		v_k(v^j\doth I^r \subset v^j\doth \hat I^r )\sqrt{k}\sqrt{m}\notag\\
		&\leq& \xi \lkor \rvol(I^r)^\frac{3}{2}m^{\frac{3}{2}}
		\label{comp}\\
		&\leq & \xi^{\frac{5}{2}}\lkor\rvol(J')^{\frac{3}{2}}\notag
	\eea
for $k$ sufficiently large. Choose $k_0\in\pnat$ such that for $k\geq k_0$, \eqref{comp} holds and $\sqrt{(k+1)/k}\leq \xi$. For sufficiently large $n$, we can find a $k\geq k_0$ such that $mk\leq n\leq m(k+1)$. Hence,
	\beas		
		v_n(J\subset H)\sqrt{n} &\leq& v_{km}(J\subset H)\sqrt{(k+1)m}\\
							  &\leq & \xi^{\frac{5}{2}} \lkor\rvol(J')^{\frac{3}{2}}
								\sqrt\frac{k+1}{k}\\
								&\leq & \xi^{\frac{7}{2}}\lkor\rvol(J')^{\frac{3}{2}}.
	\eeas
As $\xi>1$ was arbitrarily fixed, we have proved \eqref{heis_mod_lowbd}.

$\bm{2.}$ It remains to show that
	\be\label{heis_mod_uppbd}
		\liminf_{n\rightarrow \infty} v_n(J\subset H) \sqrt{n}\geq  \lkor \rvol(J')^{\frac{3}{2}}.
	\ee

Once again, fix a $\xi>1$. The Jordan measurability of $J$ ensures that there are pairwise disjoint sets $I_1,...,I_m$, where $I_j=v^j \doth I^{r_j}$ for some $r_j>0$ and $v^j\in\C\times\rl$, $1\leq j\leq m$, such that 
	\be\label{J_cont}	
		\bigcup_{1}^m I_j\subset J'\ \text{and}\ \bigcup_{1}^m {\hat I_j}\subset J',
	\ee
where $\hat I_j=v^j \doth \hat I^{r_j}$, and
	\be\label{J_vol2}
		\rvol(J')\leq \xi \sum_{j=1}^{m}\rvol(I_j).
	\ee 
Choose a $P_n\in\polcl{J\subset H;f_\mathcal{S}}{n}$ such that $v(\mathcal{S}\setminus P_n)\leq \xi v_n(J\subset H)$ and let $n_j$ denote the number of cuts of $P_n$ whose projections intersect $I_j$ and are contained in $\hat I_j$. By part $\bm{1.}$, $v_n(J\subset H)\rightarrow 0$ as $n\rightarrow\infty$. Thus, recalling \eqref{eq_vol}, $\de(P_n)\rightarrow 0$ as $n\rightarrow\infty$. So, we may choose $n$ so large that the projections of these $n_j$ cuts, in fact, cover $I_j$ and no two cuts of $P$ whose projections intersect two different $I_j$'s intersect. Therefore,
	\be\label{count_fac}
		n_1+\cdots +n_m\leq n.
	\ee
By Lemma \ref{lem_heis} and \eqref{dil}, there is an $n_0\in\pnat$ such that 
	\be\label{sq_lowbd}
		v_k(I_j\subset \hat I_j)\geq \frac{1}{\xi} \lkor \rvol(I_j)^{\frac{3}{2}}\frac{1}{\sqrt{k}}
	\ee
for $k\geq n_0$ and $j=1,...,m$. We may further increase $n$ to ensure that
	\bes	
		n_j\geq n_0 \ \text{for}\ j=1,...,m.
	\ees
Consequently, by \eqref{J_cont} and \eqref{sq_lowbd}, we have,
	\bes
		v_n(J\subset H)\geq \frac{1}{\xi}\sum_{j=1}^m v_{n_j}(I_j\subset \hat I_j)
		\geq \frac{ \lkor}{\xi^2}\sum_{j=1}^m \frac{\rvol(I_j)^{\frac{3}{2}}}{\sqrt{n_j}}.
	\ees
Now, H{\"o}lder's inequality yields,
	\bes	
		\sum_{j=1}^m\rvol(I_j)
		=\sum_{j=1}^m\left(\frac{\rvol(I_j)}{n_j^{1/3}}\right)n_j^{1/3} 
		\leq \left(\sum_{j=1}^m\frac{\rvol(I_j)^{3/2}}{n_j^{1/2}}\right)^{\frac{2}{3}}
			\left(\sum_{j=1}^m n_j\right)^{\frac{1}{3}}.
	\ees
Using this, \eqref{J_vol2} and \eqref{count_fac}, we obtain	
	\bes
		v_n(J\subset H) \geq
		\frac{ \lkor}{\xi^2}\left(\sum_{j=1}^m\rvol(I_j)\right)^{\frac{3}{2}}
		\left(\frac{1}{\sum_1^m n_j}  \right)^{\frac{1}{2}}
		\geq  \frac{ \lkor}{\xi^{7/2}}\rvol(J')^{\frac{3}{2}}\frac{1}{\sqrt{n}}
	\ees
for $n$ sufficiently large. As the choice of $\xi>1$ was arbitrary, \eqref{heis_mod_uppbd} now stands proved. 
\end{proof}

As a final remark, we extend the above lemma to a class of slightly more general model domains in order to illustrate the effect of the Levi determinant on our asymptotic formula.  

\begin{cor}\label{cor_levi} Let $\mathcal{S}_\lam:=\{(z_1,x_2+iy_2)\in\CC:y_2>\lam|z_1|^2\}$ and $f_{\mathcal{S}_\lam}(z,w)=\lam(z_2-\conj{w_2})-2 i \lam^2(z_1\conj{w_1})$, for $\lam>0$. Let $J,H\subset\bdy \mathcal{S}_\lam$ be compact and Jordan measurable with $J\subset \inte_{\bdy \mathcal{S}_\lambda}H$. Then
	\bes
		v_n(\mathcal{S}_\lam;J\subset H):=v(\mathcal{S}_\lam;\polcl{J\subset H;f_{\mathcal{S}_\lam}}{n}) 
		\sim \lam^{\frac{1}{2}}\rvol(J')^{\frac{3}{2}}{\lkor}\frac{1}{\sqrt{n}}
	\ees
as $n\rightarrow\infty$.
\end{cor}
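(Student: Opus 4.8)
The plan is to deduce Corollary \ref{cor_levi} from Lemma \ref{heis_mod} by an explicit biholomorphic change of variables that straightens $\mathcal{S}_\lam$ to $\mathcal{S}$ and tracks the effect on volumes, polyhedra, and the defining function $f_{\mathcal{S}_\lam}$. Concretely, I would consider the map $\Phi_\lam:\CC\to\CC$, $\Phi_\lam(z_1,z_2)=(\sqrt\lam\, z_1, \lam z_2)$. One checks immediately that $\Phi_\lam(\mathcal{S}_\lam)=\mathcal{S}$: if $y_2>\lam|z_1|^2$ then, writing $(\zeta_1,\zeta_2)=\Phi_\lam(z_1,z_2)$, we have $\ima\zeta_2=\lam y_2>\lam^2|z_1|^2=|\zeta_1|^2$. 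Next I would verify the key compatibility of $\Phi_\lam$ with the defining functions: a direct substitution shows that $f_{\mathcal S}(\Phi_\lam z,\Phi_\lam w)=\lam\big(z_2-\conj{w_2}\big)-2i\lam^2 z_1\conj{w_1}=f_{\mathcal{S}_\lam}(z,w)$, so $\Phi_\lam$ carries $f_{\mathcal S_\lam}$-cuts with source $w\in\bdy\mathcal S_\lam$ and size $\de$ exactly to $f_{\mathcal S}$-cuts with source $\Phi_\lam w\in\bdy\mathcal S$ and the same size $\de$. Consequently $\Phi_\lam$ induces a bijection between $\polcl{J\subset H;f_{\mathcal S_\lam}}{n}$ and $\polcl{\Phi_\lam(J)\subset\Phi_\lam(H);f_{\mathcal S}}{n}$, and it sends $\Om\setminus P$ to $\mathcal S\setminus\Phi_\lam(P)$.

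With this dictionary in place, the second step is bookkeeping of volumes. Since $\Phi_\lam$ is linear with real Jacobian determinant $|\det\JacR\Phi_\lam|=\lam^{2}\cdot\lam^{2}=\lam^{4}$ (each of $z_1$ and $z_2$ contributes $\lam$ in each of its two real coordinates — more precisely $\sqrt\lam$ twice and $\lam$ twice, giving $\lam\cdot\lam^2=\lam^3$; I would recompute this carefully), we get $\vol(\mathcal S\setminus\Phi_\lam(P))=|\det\JacR\Phi_\lam|\cdot\vol(\mathcal S_\lam\setminus P)$, hence
\[
 v_n(\mathcal S_\lam;J\subset H)=|\det\JacR\Phi_\lam|^{-1}\, v_n\big(\Phi_\lam(J)\subset\Phi_\lam(H)\big),
\]
where the right-hand side is the quantity controlled by Lemma \ref{heis_mod}. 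Applying that lemma gives $v_n\big(\Phi_\lam(J)\subset\Phi_\lam(H)\big)\sim \rvol\big((\Phi_\lam(J))'\big)^{3/2}\lkor\, n^{-1/2}$. The projection $(\cdot)'$ onto $\C\times\rl$ intertwines $\Phi_\lam$ with the map $\Psi_\lam:\C\times\rl\to\C\times\rl$, $(z_1,x_2)\mapsto(\sqrt\lam\,z_1,\lam x_2)$ (this is exactly $\Phi_\lam^{\operatorname{res}}$ in the paper's notation), so $(\Phi_\lam(J))'=\Psi_\lam(J')$ and $\rvol(\Psi_\lam(J'))=|\det\JacR\Psi_\lam|\cdot\rvol(J')=\lam^{2}\rvol(J')$. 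Combining the two Jacobian factors, the asymptotic constant becomes $|\det\JacR\Phi_\lam|^{-1}\,(\lam^{2})^{3/2}\lkor\,\rvol(J')^{3/2}$, and I would check that this simplifies to $\lam^{1/2}\rvol(J')^{3/2}\lkor$, matching the claimed formula.

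The main obstacle — and the only place real care is needed — is getting all the powers of $\lam$ right: the exponent in $\Phi_\lam$ on the $z_1$-coordinate, the resulting real Jacobian determinants of $\Phi_\lam$ and $\Psi_\lam$, and the exponent $3/2$ applied to the volume scaling. A wrong power anywhere produces a wrong final constant, so the proof really consists of one clean invariance observation followed by a scrupulous exponent count; there is no analytic difficulty beyond that, since Jordan measurability of $J,H$ and the containment $J\subset\inte_{\bdy\mathcal S}H$ are preserved by the biholomorphism $\Phi_\lam$ and its restriction, allowing Lemma \ref{heis_mod} to apply verbatim to the images.
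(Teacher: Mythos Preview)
Your approach---reduce to Lemma \ref{heis_mod} via a linear biholomorphism carrying $\mathcal{S}_\lam$ to $\mathcal{S}$ and track the Jacobians---is exactly the paper's. However, the map you chose is wrong, and both of your verifications are in error. With $\Phi_\lam(z_1,z_2)=(\sqrt\lam\,z_1,\lam z_2)$ one has $|\zeta_1|^2=\lam|z_1|^2$, not $\lam^2|z_1|^2$; consequently $\Phi_\lam$ is an \emph{automorphism of $\mathcal{S}$}, not a map $\mathcal{S}_\lam\to\mathcal{S}$. Likewise $f_{\mathcal S}(\Phi_\lam z,\Phi_\lam w)=\lam(z_2-\conj{w_2})-2i\lam\,z_1\conj{w_1}$, which is not $f_{\mathcal S_\lam}(z,w)$ (the cross term should carry $\lam^2$). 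And indeed your Jacobian bookkeeping, done correctly for your $\Phi_\lam$, gives $|\det\JacR\Phi_\lam|=\lam^3$ and $|\det\JacR\Psi_\lam|=\lam^2$, so the final constant would be $\lam^{-3}(\lam^2)^{3/2}=1$, not $\lam^{1/2}$.

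The paper uses $\Xi(z_1,z_2)=(\lam z_1,\lam z_2)$. Then $\ima\zeta_2=\lam y_2>\lam^2|z_1|^2=|\zeta_1|^2$ genuinely characterizes $\mathcal{S}$, and $f_{\mathcal S}(\Xi z,\Xi w)=\lam(z_2-\conj{w_2})-2i\lam^2 z_1\conj{w_1}=f_{\mathcal S_\lam}(z,w)$ holds on the nose. Now $\det\JacR\Xi=\lam^4$ and $\det\JacR\Xi^{\operatorname{res}}=\lam^3$, and your exponent count becomes $\lam^{-4}(\lam^3)^{3/2}=\lam^{1/2}$, as required. So the ``scrupulous exponent count'' you anticipated is precisely where the proposal goes off; replace $\sqrt\lam$ by $\lam$ on the first coordinate and everything you wrote is correct.
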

 \begin{proof} Let $\Xi:\CC\rightarrow\CC$ be the biholomorphism $\Xi:(z_1,z_2)\mapsto(\lam z_1,\lam z_2)$. Then, $\mathcal{S}=\Xi(\mathcal{S}_\lam)$ and $f_{\mathcal{S}_\lam}(z,w)=f_\mathcal{S}(\Xi(z),\Xi(w))$. Therefore, there is a bijective correspondence between $\polcl{J\subset H;f_{\mathcal{S}_\lam}}{n}$ and $\polcl{\Xi J\subset \Xi H;f_\mathcal{S}}{n}$ given by $P\mapsto \Xi P$. Now, as $\det(\JacR\Xi)\equiv\lam^{4}$ and $\det(\JacR\Xi^{\operatorname{res}})\equiv\lam^{3}$, we have
	 \bes
		\frac{v_n(\mathcal{S}_\lam;J\subset H)}{\rvol(J')^{\frac{3}{2}}}
		=\frac{\lam^{-4}v_n(\mathcal{S};\Xi J\subset \Xi H)}
			{\lam^{-\frac{9}{2}}\rvol(\Xi^{\operatorname{res}}J')^{\frac{3}{2}}}
		\sim \lam^{\frac{1}{2}}\lkor\frac{1}{\sqrt{n}}.
	\ees
\end{proof}

\section{Local Estimates Via Model Domains}\label{sec_localest}
Lemma \ref{lem_main} suggests a way to locally compare the volume-minimizing approximations drawn from two different classes of $f$-polyhedra which exhibit some comparability. In this section, our main goal is Lemma \ref{lem_locests} where we set up a local correspondence between $\Om$ and a  model domain $\mathcal{S}_\lam$, pull back the special cuts given by $f_{\mathcal{S}_\lam}$ (defined in Section \ref{sec_approxmodel}) via this correspondence, and establish a \eqref{ineq_f-g}-type relationship between the pulled-back cuts and those coming from the Levi polynomial of a defining function of $\Om$. First, we note a useful estimate on the Levi polynomial.
 
\begin{lemma}\label{lem_Leviest} Let $\Om$ be a $\cont^2$-smooth strongly pseudoconvex domain. Suppose $\rho\in\cont^2(\CC)$ is a strictly plurisubharmonic defining function of $\Om$. Then, there exist constants $c>0$ and $\tau>0$ such that 
	\be\label{eq_Leviest}
		||z-w||^2\leq c |\mathfrak{p}(z,w)|,
	\ee 
on $\Om_\tau=\{(z,w)\in\pkdom{\Om}:||z-w||\leq \tau\}$, where $\mathfrak{p}$ is the Levi polynomial of $\rho$. 
\end{lemma}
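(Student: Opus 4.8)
The plan is to exploit the strict plurisubharmonicity of $\rho$ to show that the real Hessian of $w\mapsto -\operatorname{Re}\mathfrak p(z,w)$ is positive definite, so that $|\mathfrak p(z,w)|$ behaves, to leading order, like $\|z-w\|^2$ near the diagonal of $\bdy\Om$. First I would record the Taylor expansion of $\rho$ around $w$: since $\rho$ is $\cont^2$, we have
\[
	\rho(z)=\rho(w)+2\operatorname{Re}\cler_\rho(z,w)+\sum_{j,k}\secpartl{\rho}{z_j}{\conj{z_k}}(w)(z_j-w_j)\conj{(z_k-w_k)}+o(\|z-w\|^2),
\]
and hence, by the definition of the Levi polynomial, $\rho(z)-\rho(w)=2\operatorname{Re}\mathfrak p(z,w)+L_w(z-w)+o(\|z-w\|^2)$, where $L_w(\zeta)=\sum_{j,k}\secpartl{\rho}{z_j}{\conj{z_k}}(w)\zeta_j\conj{\zeta_k}$ is the (real) Levi form at $w$. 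Rearranging gives
\[
	-2\operatorname{Re}\mathfrak p(z,w)=\rho(w)-\rho(z)+L_w(z-w)+o(\|z-w\|^2).
\]

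Next I would localize to $(z,w)\in\Om_\tau$ with $w\in\bdy\Om$, so $\rho(w)=0$ and $\rho(z)\le 0$; thus $-\rho(z)\ge 0$ and
\[
	-2\operatorname{Re}\mathfrak p(z,w)\ge L_w(z-w)+o(\|z-w\|^2).
\]
By strict plurisubharmonicity of $\rho$ on the compact set $\bdy\Om$, there is a uniform constant $m>0$ with $L_w(\zeta)\ge m\|\zeta\|^2$ for all $w\in\bdy\Om$ and all $\zeta\in\CC$; compactness also makes the $o(\|z-w\|^2)$ term uniform in $w$, so we may shrink $\tau$ to absorb it and obtain $-2\operatorname{Re}\mathfrak p(z,w)\ge \tfrac{m}{2}\|z-w\|^2$ on $\Om_\tau$. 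Since $|\mathfrak p(z,w)|\ge|\operatorname{Re}\mathfrak p(z,w)|\ge -\operatorname{Re}\mathfrak p(z,w)\ge\tfrac{m}{4}\|z-w\|^2$, the claim follows with $c=4/m$.

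The one subtlety, and the step I would treat most carefully, is the uniformity of the error term: the $o(\|z-w\|^2)$ in the Taylor expansion is controlled by the modulus of continuity of the second derivatives of $\rho$, which is uniform on the compact neighborhood $\conj\Om$; this is exactly where the $\cont^2$-smoothness hypothesis (as opposed to mere twice-differentiability) is used, and it lets me pick a single $\tau$ that works for all $w\in\bdy\Om$ simultaneously. A minor point: the statement is phrased for $(z,w)\in\pkdom{\Om}$, i.e. $w\in\bdy\Om$, so the term $-\rho(z)\ge0$ genuinely helps rather than hurts; no positivity of $\rho$ inside is needed. Everything else is a routine compactness-plus-Taylor argument, so I do not anticipate a real obstacle here.
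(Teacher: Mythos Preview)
Your proof is correct and follows essentially the same route as the paper: both use the second-order Taylor expansion to write $-2\operatorname{Re}\mathfrak p(z,w)=-\rho(z)+L_w(z-w)+o(\|z-w\|^2)$ for $w\in\bdy\Om$, then invoke strict plurisubharmonicity (uniform on the compact $\bdy\Om$) to bound $L_w(z-w)\ge c'\|z-w\|^2$ and absorb the error by shrinking $\tau$. Your additional remarks on the uniformity of the $o(\|z-w\|^2)$ term and the role of $-\rho(z)\ge 0$ simply make explicit what the paper leaves to ``follows quite easily.''
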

\begin{proof} The second-order Taylor expansion of $\rho$ about $w\in\bdy\Om$ gives:
	\bes
		-2\rea \mathfrak{p}(z,w)=-\rho(z)+\sum_{j,k=1}^2
		\secpartl{\rho(w)}{z_j}{\conj{z_k}}(z_j-w_j)(\conj{z_k}-\conj{w_k})
		+o(||z-w||^2),
	\ees
The strict plurisubharmonicity of $\rho$ implies the existence of a $c'>0$ so that 	
	\bes
		\sum_{j,k=1}^2
		\secpartl{\rho(w)}{z_j}{\conj{z_k}}(z_j-w_j)(\conj{z_k}-\conj{w_k})
		\geq c'	||z-w||^2,\ \  (z,w)\in\conj\Om\times\conj\Om.
	\ees
The result follows quite easily from this.  
\end{proof}

\noindent {\bf Special Darboux Coordinates.} As we are now going to construct a non-holomorphic transformation, we need to alternate between the real and complex notation. Here are some clarifications.
\begin{itemize}
\item We will use $z$ (and similarly $w$) to denote both $(z_1,z_2)=(x_1+iy_1,x_2+iy_2)\in\CC$ and $(x_1,y_1,x_2,y_2)\in\rl^4$. The usage will be clear from the context. In the same vein, by $z'$ we mean either $(z_1,x_2)=(x_1+iy_1,x_2)\in\C\times\rl$ or $(x_1,y_1,x_2)\in\rl^3$.
\item For any map $\Psi:\CC\rightarrow\CC$ and $z,w\in\CC$, $\JacR\Psi(w)(z-w)$ will either denote a vector in $\CC$ or a vector in $\rl^4$ depending on the context. Recall that $\JacR\Psi(w)(z-w)=\JacC\Psi(w)(z-w)+\operatorname{Jac}_{\bar\C\!}\Psi(w)(\overline z-\bar w)$, where $\operatorname{Jac}_{\bar\C\!}\Psi(w)$ is the matrix of complex conjugate derivatives of $\Psi$ at $w$. 
\item Recall that $\lbr \theta,z\rbr$ denotes the pairing between a complex covector and a complex vector. When $\theta$ is a real covector, we use $\ldbr\theta,z\rdbr$ to stress that $z$ is being viewed as a tuple in $\rl^4$.
\end{itemize}

 Fix a $\lam>0$. For reasons that will become clear in the next part of this section, we consider a special $\cont^4$-smooth strongly pseudoconvex domain $\Om$ such that $0\in\bdy\Om$ and for a neighborhood $U$ of the origin, there is a convex function function $\rho:U\rightarrow\rl$ such that $\Om\cap U=\{z\in U:\rho(z)<0\}$ and
	\be\label{eq_cvxdef2}
		\rho(z)=-\ima z_2+\lam |z_1|^2+2\rea(\mu z_1\conj{z_2})+\nu|z_2|^2+o(|z|^2).
	\ee
\sloppy
We may shrink $U$ to find a convex function $F:=F_\rho:U'\rightarrow \rl$ that satisfies  $\rho(z_1,x_2,F(z_1,x_2))=0$. $\rho$ and $F_\rho$ are both 	$\cont^4$-smooth and $-i(\partial\rho-\conj\partial\rho)$ is a $\cont^3$-smooth contact form on $\bdy\Om\cap U$. The domain $\mathcal{S}_\lam$ from Section \ref{sec_approxmodel} is such a domain with $\rho^\lam(z)=-\ima z_2+\lam|z_1|^2$ and $F_{\rho^\lam}(z_1,x_2)=\lam|z_1|^2$. 

Darboux's theorem in contact geometry (see \cite[Appendix 4]{Ar89}) says that any two equi-dimensional contact structures are locally contactomorphic. We seek local diffeomorphisms between $\Om$ and $\mathcal{S}_\lam$ that extend to local contactomorphisms between $(\bdy\Om,-i(\partial\rho-\conj\partial\rho))$ and $(\bdy \mathcal{S}_\lam,-i(\partial\rho^\lam-\conj\partial\rho^\lam))$, and satisfy estimates essential to our goal. We carry out this construction over the next three lemmas, working intially on $\rl^3$ instead of $\bdy\Om$. For this, if $\mathsf{gr}_\rho:U'\rightarrow U$ maps $(x_1,y_1,x_2)$ to $(x_1,y_1,x_2,F_\rho(x_1,y_1,x_2))$, we set
	\beas
		\theta_\rho
		&:=&(\mathsf{gr}_\rho)^*\left(\frac{\partial\rho-\conj\partial\rho}{i}\right)\\
		&=&\frac{-1}{\rho_{y_2}}\Big((\rho_{y_2}\rho_{y_1}+\rho_{x_1}\rho_{x_2})dx_1
		-(\rho_{y_2}\rho_{x_1}-\rho_{y_1}\rho_{x_2})dy_1
		+(\rho_{y_2}^2+\rho_{x_2}^2)dx_2\Big),
	\eeas
where, by the partial derivatives of $\rho$ we mean their pull-backs to $U'$ via $\mathsf{gr}_\rho$. 

\begin{lemma}\label{lem_darb}
Let $\Om$ be defined by \eqref{eq_cvxdef2}. There is an open subset $(0\in)V\subset U'\subset\rl^3$ and a $\cont^2$-smooth diffeomorphism $\Pi=(\pi_1,\pi_2,\pi_3):V\rightarrow\rl^3$ with $\Pi(0)=0$ satisfying
	\begin{itemize}
		\item $\Pi^*\theta_{\rho^\lam}(z')=\alpha(z')\theta_\rho(z')$ for all 
				$z'\in V$, and some $\alpha\in\cont(V)$ with $\alpha(0)=1$; and
		\item $|\det\JacR\Pi(0)|=1$.
	\end{itemize}
\end{lemma}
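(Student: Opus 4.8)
The plan is to realize $\Pi$ as a carefully chosen polynomial normalization of the contact form $\theta_\rho$, modeled on the proof of Darboux's theorem but keeping track of the behavior at the origin. First I would compute the $1$-jet of $\theta_\rho$ at $0$ from the expansion \eqref{eq_cvxdef2}: since $\rho_{y_2}(0)=-1$, $\rho_{x_2}(0)=0$, $\rho_{x_1}(0)=\rho_{y_1}(0)=0$, one finds $\theta_\rho(0)=dx_2$ (after the sign bookkeeping), matching $\theta_{\rho^\lam}(0)=dx_2$; the quadratic part of $\rho$ feeds into the linear part of the coefficients of $\theta_\rho$, and the $\lam|z_1|^2$ term contributes exactly the $\lam$-weighted $x_1\,dy_1-y_1\,dx_1$ piece that also appears in $\theta_{\rho^\lam}$. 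So at the level of $1$-jets, $\theta_\rho$ and $\theta_{\rho^\lam}$ already agree up to the terms involving $\mu$ and $\nu$; the role of $\Pi$ is to absorb those extra terms (and the higher-order remainder) into a conformal factor $\alpha$.

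Next I would construct $\Pi$ in two stages. First, a linear change of coordinates $L$ on $\rl^3$, chosen so that $L^*\theta_{\rho^\lam}$ and $\theta_\rho$ have the same $1$-jet at $0$ modulo a scalar; because both forms have the same Reeb/contact structure type (the nondegeneracy $\theta\wedge d\theta\neq 0$ holds at $0$ for both, by strong pseudoconvexity), such an $L$ exists, and one can arrange $L(0)=0$. One must check that $L$ can be taken with $|\det\JacR L|=1$: the ambiguity in $L$ is precisely a scaling, which can be pushed into $\alpha$, so after rescaling we get the Jacobian-one normalization. Second, I would apply the relative (Moser-type) Darboux argument: on a small neighborhood $V$, interpolate $\theta_t = (1-t)\theta_\rho + t\, L^*\theta_{\rho^\lam}$ (all contact near $0$ since they agree to first order there), and solve for a time-dependent vector field $X_t$ with $\mathcal{L}_{X_t}\theta_t + \dot\theta_t = \beta_t \theta_t$ for a suitable function $\beta_t$ — this is the standard trick that allows a conformal factor, and it is solvable because the $\theta_t$ are contact. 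Integrating $X_t$ from $t=0$ to $t=1$ (possible after shrinking $V$, since $X_t(0)=0$ so the flow is defined near $0$ for unit time) gives a diffeomorphism $\phi$ fixing $0$ with $\phi^*(L^*\theta_{\rho^\lam}) = e^{c}\,\theta_\rho$ for some function $c$ with prescribed value at $0$; I can normalize $c(0)=0$ by yet another scalar absorbed into the linear part. Then $\Pi := L \circ \phi$ does the job: the conformal-factor property is immediate, and $\JacR\Pi(0) = \JacR L(0)\cdot\JacR\phi(0)$, where $\JacR\phi(0)$ has determinant $1$ (the flow of a vector field vanishing at $0$ has derivative $\exp(\int_0^1 DX_t(0)\,dt)$ at $0$, and one checks $\operatorname{tr} DX_t(0) = 0$ from the cohomological equation — or, more cheaply, one absorbs any discrepancy into $L$ and re-normalizes, using that the statement only demands $|\det| = 1$, not that $\phi$ be volume-preserving everywhere).

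The main obstacle I anticipate is \emph{simultaneously} achieving the conformal-equivalence of the contact forms and the Jacobian-one condition at $0$, since the Moser homotopy naturally produces a diffeomorphism whose derivative at $0$ is some unspecified element of the conformal symplectic group of the contact hyperplane, and pinning its determinant to $1$ requires carefully splitting the "scaling" degree of freedom between the linear prefactor $L$ and the conformal factor $\alpha$. A secondary technical point is ensuring the regularity bookkeeping: $F_\rho$ is only $\cont^4$, so $\theta_\rho$ is $\cont^3$, the interpolating vector field $X_t$ is then $\cont^2$ in $z'$ (losing one derivative in solving the division problem and another has to be watched in the $t$-dependence), and its flow $\Pi$ is $\cont^2$ — exactly the regularity claimed. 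I would organize the write-up so that the Darboux/Moser step is quoted in the form needed (contact forms agreeing to first order at a point are conformally equivalent near that point by a diffeomorphism fixing the point), and then spend the remaining effort on the two normalizations: $\theta_\rho(0)=\theta_{\rho^\lam}(0)=dx_2$ and $|\det\JacR\Pi(0)|=1$.
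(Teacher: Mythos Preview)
Your Moser/Gray--stability approach is a legitimate route to Lemma~\ref{lem_darb} and would establish the statement as written, but it is genuinely different from what the paper does. The paper gives an \emph{explicit} construction: it picks a characteristic vector field $v\in\ker\theta_\rho$, uses its flow to straighten $v$ and thereby produce first integrals $X_1,X_2$ and a function $Y_1$ with $\theta_\rho=\omega_2(Y_1\,dX_1+dX_2)$, and then sets $\pi_1=X_1$, $\pi_2=-Y_1/(4\lambda)$, $\pi_3=X_2+\tfrac12 X_1Y_1$. The Jacobian at $0$ is then read off directly as an upper-triangular matrix with $1$'s on the diagonal. Your proposal, by contrast, interpolates between $\theta_\rho$ and $\theta_{\rho^\lambda}$ and integrates a time-dependent vector field; this is cleaner conceptually but yields $\Pi$ only implicitly. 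Two remarks on your write-up: first, the ``main obstacle'' you flag is not really one --- taking $\Pi^*(\theta_{\rho^\lambda}\wedge d\theta_{\rho^\lambda})=\alpha^2\,\theta_\rho\wedge d\theta_\rho$ and evaluating at $0$ (where both volume forms equal $4\lambda\,dx_1\wedge dy_1\wedge dx_2$) gives $|\det\JacR\Pi(0)|=\alpha(0)^2$, so the two normalizations are the \emph{same} condition; second, and more consequentially, the very next lemma in the paper (Lemma~\ref{lem_darbest}) opens with ``Let $\Pi$ and $V$ be as in the proof of Lemma~\ref{lem_darb}'' and uses the explicit formulas for $\pi_3$ (and the vanishing of specific second derivatives at $0$) to control $\Hess_\rl\pi_3$. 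Your abstract $\Pi$ carries no such information, so you would have to redo the Hessian analysis from scratch --- likely by reproducing something close to the paper's construction anyway.
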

\begin{proof} We proceed with the understanding that when refering to functions defined a priori on $U$ (such as $\rho$ or its derivatives) we implicitly mean their pull-backs to $U'$ via $\mathsf{gr}_\rho$. 
 
Now, consider the following $\cont^3$-smooth vector field in $\ker\theta_\rho$ on $U'$:
	\bes
		v=\partl{\rho}{x_2}\partl{}{x_1}
					-\partl{\rho}{y_2}\partl{}{y_1}
					-\partl{\rho}{x_1}\partl{}{x_2}.
	\ees
We let $\gamma^t(z'):=\gamma(z';t)=(\gamma_1(z';t),\gamma_2(z';t),\gamma_3(z';t))$ be the flow of $v$ such that $\gamma(z';0)=z'$. Note that $\gamma(z';t)$ is $\cont^3$-smooth in $z'$ and $\cont^4$-smooth in $t$. Differentiating the initial value problem for the flow, we have 
	\be\label{eq_hdergam}
		\JacR\gamma^0\equiv \id\ \text{and}\ \Hess_\rl\gamma^0\equiv 0.
	\ee 

Observe that the map 
	\bes
	\Gamma=(\Gamma_1,\Gamma_2,\Gamma_3):z'=(x_1,y_1,x_2)\mapsto\gamma(x_1,0,x_2;y_1), 
	\ees  
is defined on some neighborhood, $U'_1\subset U'$, of the origin. Moreover, dropping the arguments, switching to our shorthand notation, and denoting $f\circ\Gamma$ by $\wt{f}$, we have
	\beas
		\JacR\Gamma=
			\left( \begin{array}{ccc}
				{\Gamma_1}_{x_1} 
								&\wt{{\rho}_{x_2}}
											&{\Gamma_1}_{x_2}
			\vspace{0.5em} \\ 
				{\Gamma_2}_{x_1}
								& -\wt{{\rho}_{y_2}}
											&{\Gamma_2}_{x_2} 
			\vspace{0.5em} \\	
				{\Gamma_3}_{x_1} 
								& -\wt{{\rho}_{x_1}} 
											&{\Gamma_3}_{x_2} 
			\end{array} \right),
	\eeas
and
	\beas
		(\JacR\Gamma)^{-1}=
			\left( \begin{array}{ccc}
				\frac{\wt{\rho_{x_1}}{\Gamma_2}_{x_2}
					-\wt{\rho_{y_2}}{\Gamma_3}_{x_2}}{\det\JacR\Gamma} &
				\frac{-\wt{\rho_{x_1}}{\Gamma_1}_{x_2}
					-\wt{\rho_{x_2}}{\Gamma_3}_{x_2}}{\det\JacR\Gamma} &
				\frac{\wt{\rho_{y_2}}{\Gamma_1}_{x_2}
					+\wt{\rho_{x_2}}{\Gamma_2}_{x_2}}{\det\JacR\Gamma} 
		\vspace{0.5em} \\ 
				\frac{{\Gamma_2}_{x_2}{\Gamma_3}_{x_1}
					-{\Gamma_2}_{x_1}{\Gamma_3}_{x_2}}{\det\JacR\Gamma} &
				\frac{-{\Gamma_1}_{x_2}{\Gamma_3}_{x_1}
					+{\Gamma_1}_{x_1}{\Gamma_3}_{x_2}}{\det\JacR\Gamma} &
				\frac{{\Gamma_1}_{x_2}{\Gamma_2}_{x_1}
					-{\Gamma_1}_{x_1}{\Gamma_2}_{x_2}}{\det\JacR\Gamma}
		\vspace{0.5em} \\	
				\frac{\wt{\rho_{y_2}}{\Gamma_3}_{x_1}
					-\wt{\rho_{x_1}}{\Gamma_2}_{x_1}}{\det\JacR\Gamma} &
				\frac{\wt{\rho_{x_2}}{\Gamma_3}_{x_1}
					+\wt{\rho_{x_1}}{\Gamma_1}_{x_1}}{\det\JacR\Gamma} &
				\frac{-\wt{\rho_{y_2}}{\Gamma_1}_{x_1}
					-\wt{\rho_{x_2}}{\Gamma_2}_{x_1}}{\det\JacR\Gamma} 
			\end{array} \right),
	\eeas
wherever $\JacR\Gamma$ is invertible. In particular, $\JacR\Gamma(0)=(\JacR\Gamma)^{-1}(0)=\id$ We may, therefore, locally invert $\Gamma$ (as a $\cont^3$-smooth function) in some neighborhood $W_1\subset U'_1$ of $0$. Let 
	\bes
		(X_1,Y_1,X_2)=\Gamma^{-1}(x_1,y_1,x_2).
	\ees
$\Gamma$ is constructed to `straighten' $v$ --- i.e., $\JacR\Gamma(\smpartl{}{Y_1})=v$. So, if we view $X_1$ and $X_2$ as $\cont^3$-smooth functions on $W:=\Gamma(W_1)\cap U'$, they have linearly independent differentials and $v(X_1)\equiv v(X_2)\equiv 0$. Thus, $dX_1\wedge dX_2\neq 0$ everywhere on $W$ and $dX_1(v)\equiv dX_2(v)\equiv \theta_\rho(v)\equiv 0$ on $W$. So, it must be the case that
	\bes
		\theta_\rho(\cdot)=\omega_1(\cdot)dX_1(\cdot)+\omega_2(\cdot)dX_2(\cdot),
	\ees
for some $\omega_1,\omega_2\in\cont^2(W)$. Substituting the expressions for $\theta_\rho$, $dX_1$ and $dX_2$ (the latter two can be read off the matrix $(\JacR\Gamma)^{-1}$ above), we get $\omega_1=$
	\bes
	\frac{-{\Gamma_1}_{x_1}\wt{\rho_{y_2}}(\rho_{y_1}\rho_{y_2}+\rho_{x_1}\rho_{x_2})
			+{\Gamma_2}_{x_1}(\wt{\rho_{x_1}}(\rho_{y_2}^2+\rho_{x_2}^2)
			-\wt{\rho_{x_2}}(\rho_{y_1}\rho_{y_2}+\rho_{x_1}\rho_{x_2}))
			-{\Gamma_3}_{x_1}\wt{\rho_{y_2}}(\rho_{x_2}^2+\rho_{y_2}^2)}
			{\rho_{y_2}\wt{\rho_{y_2}}}
	\ees
and $\omega_2=$
	\bes
	\frac{-{\Gamma_1}_{x_2}\wt{\rho_{y_2}}(\rho_{y_1}\rho_{y_2}+\rho_{x_1}\rho_{x_2})
			+{\Gamma_2}_{x_2}(\wt{\rho_{x_1}}(\rho_{y_2}^2+\rho_{x_2}^2)
			-\wt{\rho_{x_2}}(\rho_{y_1}\rho_{y_2}+\rho_{x_1}\rho_{x_2}))
			-{\Gamma_3}_{x_2}\wt{\rho_{y_2}}(\rho_{x_2}^2+\rho_{y_2}^2)}
			{\rho_{y_2}\wt{\rho_{y_2}}},
	\ees
where, once again, $\wt{f}:=f\circ\Gamma$. Observe that $\omega_1(0)=0$ and $\omega_2(0)=1$. Thus, for some neighborhood, $V\subset W$, of the origin, $\omega_2\neq 0$ and 
	\bes
		\theta_\rho=\omega_2(Y_1dX_1+dX_2),
	\ees
where $Y_1:=\omega_1/\omega_2$. Finally, set
	\bes
		\alpha:=\frac{1}{\omega_2},\ \pi_1:=X_1,\ \pi_2:=-\frac{Y_1}{4\lam}\ \text{and}\ \pi_3:=X_2+\frac{X_1Y_1}{2}.
	\ees
Then, on $V$, 
	\be\label{eq_contact}
		\alpha\theta_\rho=-2\lam\pi_2d\pi_1+2\lam\pi_1d\pi_2+d\pi_3=\Pi^*(\theta_{\rho^\lam})
	\ee
and $\alpha(0)=1$.

Refering to \eqref{eq_hdergam} and the formulae for $\omega_1$, $\omega_2$ and $(\JacR\Gamma)^{-1}$, we get   
	\bea\label{eq_contder}
		\JacR\Pi(0)=
			\left( \begin{array}{ccc}
				1 & 0	& 0 \vspace{0.25em} \\
							0 &1 & -\frac{\ima\mu}{2\lam} \vspace{0.25em} \\
							0 & 0 & 1 
			\end{array} \right).
	\eea
We have, thus, constructed the required map. 
\end{proof}

We now show that the contact transformation constructed above satisfies an estimate crucial to our analysis. 

\begin{lemma}\label{lem_darbest} Let $\Pi$ and $V$ be as in the proof of Lemma \ref{lem_darb} and $\V\Subset V$ be a neighborhood of the origin. Then, there is an $ e_1\in\cont(\V)$ with $e_1(0)=0$ and a $ c_1>0$ such that, for all $w'\in\V$ and $z'\in\rl^3$,  
	\bea\label{eq_darbest}
		&&|(z'-w')\tran\cdot \Hess_\rl\pi_3(w')\cdot(z'-w')|\notag \\
		&&\qquad \qquad\leq e_1(w')|z'-w'|^2
		+ c_1(|z_1-w_1||x_2-u_2|+|x_2-u_2|^2).
	\eea	
\end{lemma}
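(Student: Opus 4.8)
The plan is to extract the quantitative content of Lemma \ref{lem_darb} by differentiating the explicit formulas for $\pi_1,\pi_2,\pi_3$ and isolating the entries of $\Hess_\rl \pi_3$ that fail to vanish at the origin. The key observation is \eqref{eq_contder}: since $\JacR\Pi(0)=\id$ up to the single off-diagonal entry $-\ima\mu/(2\lam)$, and since $\pi_3 = X_2 + X_1 Y_1/2$ with $X_1,X_2$ having differentials equal to $dx_1,dx_2$ at $0$ and $Y_1 = \omega_1/\omega_2$ vanishing at $0$, the function $\pi_3$ agrees with $x_2$ to first order at the origin. So $\Hess_\rl\pi_3(0)$ need not vanish, but I expect its structure to be constrained: the "bad" directions are precisely those involving $x_2$. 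Concretely, I would compute $\Hess_\rl\pi_3(0)$ explicitly from the formulas for $X_1,X_2,Y_1$ (using \eqref{eq_hdergam}, i.e. $\JacR\gamma^0=\id$ and $\Hess_\rl\gamma^0=0$, to kill all the $\gamma$-Hessian terms) and verify that, in the coordinates $z'=(x_1,y_1,x_2)$, every nonzero entry of $\Hess_\rl\pi_3(0)$ sits in a row or column indexed by $x_2$. That gives, at $w'=0$,
\[
  |(z')\tran\cdot\Hess_\rl\pi_3(0)\cdot z'| \le c_1\big(|z_1||x_2| + |x_2|^2\big),
\]
which is exactly the $w'=0$ case of \eqref{eq_darbest} with $e_1(0)=0$.

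Next I would promote this to a neighborhood of the origin. Write
\[
  (z'-w')\tran\cdot\Hess_\rl\pi_3(w')\cdot(z'-w')
  = (z'-w')\tran\cdot\Hess_\rl\pi_3(0)\cdot(z'-w')
  + (z'-w')\tran\cdot\big(\Hess_\rl\pi_3(w')-\Hess_\rl\pi_3(0)\big)\cdot(z'-w').
\]
The first term is bounded by $c_1(|z_1-w_1||x_2-u_2| + |x_2-u_2|^2)$ by the computation above (the quadratic form $\Hess_\rl\pi_3(0)$ acts on the displacement $z'-w'$, and $(z_1-w_1,x_2-u_2)$ are the relevant slots). For the second term, $\pi_3\in\cont^2(V)$, so $w'\mapsto\Hess_\rl\pi_3(w')$ is continuous; hence on a relatively compact $\V\Subset V$ the operator norm $\|\Hess_\rl\pi_3(w')-\Hess_\rl\pi_3(0)\|$ is bounded by a continuous function $e_1(w')$ with $e_1(0)=0$, and the second term is $\le e_1(w')|z'-w'|^2$. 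Adding the two bounds yields \eqref{eq_darbest}.

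The main obstacle is the bookkeeping in the first step: one must honestly compute $\Hess_\rl\pi_3(0)$ from the nested definitions ($\pi_3$ in terms of $X_1,X_2,Y_1$; these in terms of $\Gamma^{-1}$ and $\omega_1,\omega_2$; and $\omega_1,\omega_2$ in terms of $\Gamma$ and the derivatives of $\rho$), and confirm that the surviving second-order terms only ever pair a differentiation in an arbitrary direction with a differentiation in the $x_2$-direction — i.e. the pure $\partial^2/\partial x_1^2$, $\partial^2/\partial y_1^2$, $\partial^2/\partial x_1\partial y_1$ entries of $\Hess_\rl\pi_3(0)$ all vanish. This is where \eqref{eq_hdergam} does the real work, since it forces all contributions of the flow $\gamma$ at second order to drop out, leaving only terms built from second derivatives of $\rho$, which by the normal form \eqref{eq_cvxdef2} and the structure of $Y_1=\omega_1/\omega_2$ (which vanishes at $0$) land in the $x_2$-coupled entries. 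Once that vanishing pattern is established, the rest is a routine Taylor-remainder argument.
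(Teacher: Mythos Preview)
Your plan matches the paper's proof: expand $(z'-w')\tran\Hess_\rl\pi_3(w')(z'-w')$, check that the three coefficients multiplying $(x_1-u_1)^2$, $(x_1-u_1)(y_1-v_1)$, $(y_1-v_1)^2$ vanish at $w'=0$, then absorb those coefficients into $e_1(w')$ by continuity while the remaining $x_2$-coupled terms supply $c_1$. One refinement of your heuristic: \eqref{eq_hdergam} alone does not kill the $(x_1,y_1)$-entry --- the paper records $(X_2)_{x_1y_1}(0)=2\lam$, $(X_1)_{x_1}(0)=1$, $(Y_1)_{y_1}(0)=-4\lam$, so that entry is $2(X_2)_{x_1y_1}(0)+(X_1)_{x_1}(0)(Y_1)_{y_1}(0)=4\lam-4\lam=0$, a genuine cancellation between two nonzero contributions (ultimately forced by the contact relation \eqref{eq_contact}) that you must compute explicitly rather than expect to drop out term-by-term.
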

\begin{proof}
Recall that $\pi_3=X_2+\frac{X_1Y_1}{2}$. We refer to the construction in the proof of Lemma \ref{lem_darb} and collect the following data:
	\beas
	\label{eq_derx1}	
		&&(X_1)_{x_1}(0)=1,\ (X_1)_{y_1}(0)=0; \\
	\label{eq_dery1}
		&&(Y_1)_{x_1}(0)=0,\ (Y_1)_{y_1}(0)=-4\lam;\\
	\label{eq_derx2}
		&&(X_2)_{x_1x_1}(0)=0,\ (X_2)_{x_1y_1}(0)=2\lam=(X_2)_{y_1x_1}(0),\ 
			(X_2)_{y_2y_2}(0)=0.
	\eeas
Next, we write out the relevant terms.
	\beas
		&&(z'-w')\tran\cdot \Hess_\rl\pi_3(w')\cdot(z'-w')\\
		&&=\Big({X_2}_{x_1x_1}(w')
			+{X_1}_{x_1}(w'){Y_1}_{x_1}(w')
			+\frac{1}{2} Y_1(w'){X_1}_{x_1x_1}(w')
			+\frac{1}{2} X_1(w'){Y_1}_{x_1x_1}(w')\Big)(x_1-u_1)^2\\
		 &&\quad +\Big(2{X_2}_{x_1y_1}(w')+{X_1}_{x_1}(w'){Y_1}_{y_1}(w')
				+{X_1}_{y_1}(w'){Y_1}_{x_1}(w')\Big)(x_1-u_1)(y_1-v_1)\\
		&&\quad +\Big(Y_1(w'){X_1}_{x_1y_1}(w')
			+ X_1(w'){Y_1}_{x_1y_1}(w')\Big)(x_1-u_1)(y_1-v_1)\\
		&& \quad +\Big({X_2}_{y_1y_1}(w')
			+{X_1}_{y_1}(w'){Y_1}_{y_1}(w')
			+\frac{1}{2} Y_1(w'){X_1}_{y_1y_1}(w')
			+\frac{1}{2} X_1(w'){Y_1}_{y_1y_1}(w')\Big)(y_1-v_1)^2\\
		&& \quad +2{\pi_3}_{x_1x_2}(w')(x_1-u_1)(x_2-u_2)
					+2{\pi_3}_{y_1x_2}(w')(y_1-v_1)(x_2-u_2)
					+{\pi_3}_{x_2x_2}(w')(x_2-u_2)^2.
\eeas
Now, the coefficients of $(x_1-u_1)^2$, $(x_1-u_1)(y_1-v_1)$ and $(y_1-u_1)^2$ in the above expansion all vanish at the origin (see data listed above). Thus, we obtain \eqref{eq_darbest}.
\end{proof}

All that remains is to extend the above transformation to $\Om$. For this, let $V$ be as in Lemma \ref{lem_darb} and $G_\rho:V\times\rl\rightarrow\C^2$ be the map
	\bes
		(x_1,y_1,x_2,y_2)\mapsto(x_1,y_1,x_2,F_\rho(x_1,y_1,x_2)+y_2).
	\ees
$G_\rho$ is evidently a $\cont^4$-smooth diffeomorphism with $G(V\times(0,t])\subset\Om$ for some $t>0$.  We note the following facts about $G_\rho$:
\begin{itemize}
\item $\JacR G_\rho(0)=\id$
\item $(G_\rho)^*(d\rho)=\left(\partl{\rho}{y_2}\circ G_\rho\right) dy_2$ and $(G_\rho)^*\left(\frac{\partial\rho-\conj\partial\rho}{i}\right)
		=\theta_\rho$ on $V\times\{0\}$.
\end{itemize}

\begin{lemma}\label{lem_darbext} There is a neighborhood $ U\subset\CC$ of the origin and a $\cont^2$-diffeomorphism $\Psi: U \rightarrow \Psi(U)\subset \CC$ such that
	\begin{itemize}
		\item $\Psi(0)=0$, $\Psi(\Om\cap U)=\mathcal{S}_\lam\cap\Psi( U)$ and 
				$\Psi(\bdy\Om\cap U)=\bdy \mathcal{S}_\lam\cap\Psi( U)$;
		\item $\det\JacR\Psi(0)=1$ and $\det\JacR\Psi^{\operatorname{res}}(0)=1$; and
		\item if $\mathfrak{l}_\rho$ and $\mathfrak{l}_\lam$ denote the Cauchy-Leray map of $\rho$ and $\rho^\lam$, 
				respectively, then 
				\bea\label{eq_darbestmain}
				&& \qquad\qquad \big|\mathfrak{l}_\rho(z,w)-\mathfrak{l}_\lam(\Psi(z),\Psi(w))\big| \\
			 &&\leq { ( e(w)+ D(z-w))
				\left(|\mathfrak{l}_\lam(\Psi(z),\Psi(w))+||z-w||^2\right)
				 + c|\mathfrak{l}_\lam(\Psi(z),\Psi(w))|^2},\notag 
				\eea
				on $\{(z,w)\in\conj\Om\times U:||z-w||\leq\tau\}$, for some choice
				 of $ e\in\cont(U)$ with $e(0)=0$, 
				$ D(\zt)=o(1)$ as $|\zt|\rightarrow 0$, and constants
				 $ c,\tau>0$. 
	\end{itemize}
\end{lemma}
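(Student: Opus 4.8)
The plan is to assemble $\Psi$ as a composition: first flatten the boundary of $\Om$ near $0$ using $G_\rho$, then apply the Darboux contactomorphism $\Pi$ from Lemma \ref{lem_darb} on the flattened boundary, and finally un-flatten using $G_{\rho^\lam}^{-1}$, so roughly $\Psi = G_{\rho^\lam}\circ(\Pi\times\mathrm{id}_{y_2})\circ G_\rho^{-1}$ (after possibly reparametrizing the transverse direction $y_2$ so that $\Psi$ sends $\Om$ to $\mathcal S_\lam$ rather than merely $\bdy\Om$ to $\bdy\mathcal S_\lam$). I would first check the easy bullets: $\Psi(0)=0$ is immediate; $\Psi(\bdy\Om\cap U)=\bdy\mathcal S_\lam\cap\Psi(U)$ follows because $\Pi$ maps the (flattened) boundary to the (flattened) boundary, while $\Psi(\Om\cap U)=\mathcal S_\lam\cap\Psi(U)$ is arranged by choosing the transverse coordinate appropriately (e.g. scaling $y_2$ by the contact-conformal factor $\alpha$ and its derivative data, using that $G_\rho^*(d\rho)=(\rho_{y_2}\circ G_\rho)dy_2$). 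The Jacobian conditions $\det\JacR\Psi(0)=1$ and $\det\JacR\Psi^{\operatorname{res}}(0)=1$ follow from $\JacR G_\rho(0)=\id$, $\JacR G_{\rho^\lam}(0)=\id$, the condition $|\det\JacR\Pi(0)|=1$ from Lemma \ref{lem_darb}, and the explicit upper-triangular-with-unit-diagonal form \eqref{eq_contder} (which also pins down the transverse derivative so the full $4\times 4$ Jacobian determinant is $1$, not just $\pm 1$).

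The substantive bullet is the estimate \eqref{eq_darbestmain} on $\mathfrak l_\rho - \mathfrak l_\lam\circ(\Psi\times\Psi)$. The key algebraic input is that $\mathfrak l_\rho(z,w) = \lbr \partial\rho(w), z-w\rbr$ is, up to a non-vanishing smooth factor coming from $\rho_{y_2}$, a complexification of the real contact form $\theta_\rho$ evaluated on $z-w$; concretely $-2i\,\mathfrak l_\rho$ restricted to the boundary agrees to second order with $\langle\!\langle \theta_\rho(w), z-w\rangle\!\rangle + (\text{quadratic terms})$, and similarly for $\mathfrak l_\lam$. The pull-back identity $\Pi^*\theta_{\rho^\lam} = \alpha\,\theta_\rho$ from \eqref{eq_contact} with $\alpha(0)=1$ therefore forces $\mathfrak l_\rho(z,w)$ and $\mathfrak l_\lam(\Psi(z),\Psi(w))$ to agree to first order along the boundary, with the discrepancy controlled by: (a) the variation of $\alpha$, contributing the $e(w)$ factor (vanishing at $0$ since $\alpha(0)=1$); (b) the second-order remainder in Taylor-expanding $\mathfrak l_\lam\circ\Psi$ versus the linear term, i.e. the Hessian terms $(z'-w')\tran\Hess_\rl\pi_3(w')(z'-w')$ estimated in Lemma \ref{lem_darbest}, which is exactly why \eqref{eq_darbest} is phrased with the $|z_1-w_1||x_2-u_2| + |x_2-u_2|^2$ tail — these terms are $O(\mathfrak p^{3/2})$-type and get absorbed into the $c|\mathfrak l_\lam|^2$ term using Lemma \ref{lem_Leviest} (namely $|x_2-u_2|\lesssim |\mathfrak l_\lam|$ on the boundary and $|z_1-w_1|^2\lesssim|\mathfrak l_\lam|$); and (c) the off-boundary part, i.e. the dependence of both sides on the transverse coordinate $\ima z_2 - $ (boundary value), which is smooth and contributes the $D(z-w)=o(1)$ factor times $||z-w||^2$ plus a genuine $|\mathfrak l_\lam|$ piece. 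I would organize this as: expand $\rho$ and $\rho^\lam$ to second order about $w$ to relate $\mathfrak l$ to $\theta$; substitute the Darboux identity; then bound each of the three error sources separately.

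The main obstacle I expect is bookkeeping the transverse ($y_2$, non-contact) direction cleanly. Lemmas \ref{lem_darb} and \ref{lem_darbest} are purely about the boundary/$\rl^3$ picture, but $\mathfrak l_\rho(z,w)$ for $z$ in the interior has an imaginary part measuring how far $z$ is from $\bdy\Om$, and I must extend $\Pi$ to a full diffeomorphism $\Psi$ of a neighborhood in $\CC$ in a way that (i) is only $\cont^2$ (we lose derivatives: $\rho,F_\rho$ are $\cont^4$, $\theta_\rho$ and the flow are $\cont^3$, $\Pi$ is $\cont^2$, so $\Psi$ is $\cont^2$ and its Hessian is merely continuous), (ii) respects $\Om\mapsto\mathcal S_\lam$ exactly, not just on the boundary, and (iii) keeps the transverse contribution to $\mathfrak l_\rho - \mathfrak l_\lam\circ\Psi$ of the stated form. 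The natural device is to define $\Psi$ on the boundary-flattened model as $(z',y_2)\mapsto(\Pi(z'), \beta(z')y_2)$ for a suitable continuous positive $\beta$ matching the conformal factor relating $d\rho$ and $d\rho^\lam$ under $\Pi$ (so that $\rho\circ\Psi^{-1}$ and $\rho^\lam$ have matching sign and comparable size), and then verify that the mismatch between $\mathrm{Im}$-parts is $O(\alpha - 1)\cdot|\mathfrak l| + o(1)\cdot||z-w||^2$. Verifying (iii) requires differentiating these compositions and invoking \eqref{eq_contder} together with Lemma \ref{lem_darbest}; this is where most of the (routine but delicate) computation lives, and where care is needed that every error term is attributed to $e(w)$, $D(z-w)$, or the $c|\mathfrak l_\lam|^2$ bucket and nothing is left over.
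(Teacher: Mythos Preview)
Your approach is essentially the paper's: $\Psi = G_{\rho^\lam}\circ(\Pi,\id)\circ G_\rho^{-1}$, with the contact pullback identity handling the linear part of $\mathfrak l_\lam(\Psi z,\Psi w)$ and Lemma~\ref{lem_darbest} controlling the Hessian remainder. Two corrections are worth making.

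First, no transverse reparametrization $\beta$ is needed: $G_\rho^{-1}$ already sends $\Om\cap U$ to $\{y_2>0\}$ (since $\rho_{y_2}(0)=-1$) and $G_{\rho^\lam}$ sends $\{y_2>0\}$ back to $\mathcal S_\lam$, so the identity on $y_2$ suffices and your anticipated ``main obstacle'' largely evaporates.

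Second, Lemma~\ref{lem_Leviest} is not the right tool for absorbing the tail $|z_1-w_1||x_2-u_2|+|x_2-u_2|^2$. It only gives $|x_2-u_2|^2\leq\|z-w\|^2\lesssim|\mathfrak p|$, i.e.\ a \emph{non-small} constant times $|\mathfrak l_\lam|$, which fits none of the three buckets in \eqref{eq_darbestmain}. The bound you actually need (and parenthetically state) is $|z_2-w_2|\lesssim |\mathfrak l_\lam(\Psi z,\Psi w)|+e(w)\|z-w\|$; this does not come from Lemma~\ref{lem_Leviest} but directly from $\partial_{z_2}\rho^\lam(0)=i/2\neq 0$ and $\partial_{z_1}\rho^\lam(0)=0$, after passing through $\Psi_2$ and inverting to first order --- exactly the paper's \eqref{eq_psi2est}--\eqref{eq_z2w2est}. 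With this, the tail distributes across all three buckets (a $D(z-w)|\mathfrak l_\lam|$ piece, an $e(w)\|z-w\|^2$ piece, and a $c|\mathfrak l_\lam|^2$ piece), not solely into $c|\mathfrak l_\lam|^2$ as you wrote.
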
 
\begin{proof} Let $\Psi=(\Psi_1,\Psi_2):=G_{\rho^\lam}\circ(\Pi,\id)\circ G_\rho^{-1}$, where $\id$ is the identity map on $\rl$, and $ U\Subset G_\rho(V\times[-t,t])$ is a neighborhood of the origin. We use the notation $(\Psi_1,\Psi_2)=(\psi_1+i\psi_2,\psi_3+i\psi_4)$. The regularity and mapping properties of $\Psi$ follow from its definition We also clarify that $\partial\rho^\lam(\Psi(w))$ denotes $\partial\rho^\lam$ evaluated at $\Psi(w)$. Since $\id^*(-dy_2)=-dy_2$ and  $\Pi^*(\theta_{\rho^\lam})=\alpha\theta_\rho$ on $\{y_2=0\}$, 
	\bes 
		\Psi^*(d\rho^\lam)
		=\alpha_1(d\rho)
	\ees
and
	\bes
		\Psi^*\left(\frac{\partial\rho^\lam-\conj\partial\rho^\lam}{i}\right)
		=\alpha_2\left(\frac{\partial\rho-\conj\partial\rho}{i}\right),
	\ees
on $\bdy\Om\cap U$, where $\alpha_1(x_1,y_1,x_2,y_2)=-1/\left(\partl{\rho}{y_2}(G_\rho(x_1,y_1,x_2,y_2))\right)$ and $\alpha_2(x_1,y_1,x_2,y_2)=\alpha(x_1,y_1,x_2)$.
Therefore, for all $w\in\bdy\Om\cap U$ and $z\in\CC$,
	\begin{flalign}
		& 2\lbr\partial\rho^\lam(\Psi(w)),\JacR\Psi(w)(z-w)\rbr \label{eq_psiest1} \\
		& \quad =2\rea\lbr\partial\rho^\lam(\Psi(w)),\JacR\Psi(w)(z-w)\rbr
		+2i\ima\lbr\partial\rho^\lam(\Psi(w)),\JacR\Psi(w)(z-w)\rbr\notag \\
		&\quad=\ldbr(\partial\rho^\lam+\conj\partial\rho^\lam)
																	(\Psi(w)),\JacR\Psi(w)(z-w)\rdbr
		+i\ldbr\frac{\partial\rho^\lam-\conj\partial\rho^\lam}{i}
							(\Psi(w)),\JacR\Psi(w)(z-w)\rdbr \notag \\
		&\quad =\ldbr\Psi^*(\partial\rho^\lam+\conj\partial\rho^\lam)(w),z-w\rdbr
		+i\ldbr\Psi^*\left(\frac{\partial\rho^\lam-\conj\partial\rho^\lam}{i}\right)
																				(w),z-w\rdbr \notag\\
		&\quad =\alpha_1(w)\ldbr(\partial\rho+\conj\partial\rho)(w),z-w\rdbr
		+i \alpha_2(w)\ldbr\left(\frac{\partial\rho-\conj\partial\rho}{i}\right)
																				(w),z-w\rdbr\notag \\
		&\quad =2\alpha_1(w)\rea\lbr\partial\rho(w),z-w\rbr
			+2i \alpha_2(w)\ima\lbr\partial\rho(w),z-w\rbr. \notag
	\end{flalign}

Now, since $\rho^\lam:=\lam|z_1|^2-y_2$, $\partl{\rho^\lam}{z_1}(\Psi(z))=\lam\conj{\Psi_1(z)}$ and $\partl{\rho^\lam}{z_2}(\Psi(z))=\dfrac{i}{2}$. Therefore, there is a $\tau_1>0$ such that on $\{(z,w)\in\rl^4\times U:||z-w||\leq\tau_1\}$,
	\bea\label{eq_psiest2}
		&&\left|\lbr\partial\rho^\lam(\Psi(w)),\Psi(z)-\Psi(w)-\JacR\Psi(w)(z-w)\rbr\right|\notag \\
		&&\qquad \qquad \leq c|\Psi_1(w)|\cdot||z-w||^2+\frac{1}{2}R_1(z-w)+R_2(z-w), 
	\eea
where, $c'>0$, $R_1(z-w)=|(z-w)^{\tran}\cdot(\Hess_\rl\psi_3(w)
+\Hess_\rl\psi_4(w))\cdot(z-w)|$, and $R_2(\zt)=o(|\zt|^2)$ as $|\zt|\rightarrow 0$. Observe that $\psi_3(z',y_2)=\pi_3(z')$ and $\psi_4(z',y_2)=\pi_1(z')^2+\pi_2(z')^2+y_2-F(z')$. As,  
	\beas
		{\psi_4}_{x_1x_1}(w)&=&2\sum_{j=1}^2
		({\pi_j}_{x_1}(w')^2+\pi_j(w'){\pi_j}_{x_1x_1}(w'))-F_{x_1x_1}(w'),\\
		{\psi_4}_{y_1y_1}(w)&=&2\sum_{j=1}^2
		({\pi_j}_{y_1}(w')^2+\pi_j(w'){\pi_j}_{y_1y_1}(w'))-F_{y_1y_1}(w')\ \text{and}\\
		{\psi_4}_{x_1y_1}(w)&=&2\sum_{j=1}^2
		({\pi_j}_{x_1}(w'){\pi_j}_{y_1}(w')+\pi_j(w'){\pi_j}_{x_1y_1}(w'))
		-F_{x_1y_1}(w')
	\eeas 
all vanish at $w=0$, we have, for all $(z,w)\in\rl^4\times U$,
	\bea\label{eq_psi4est}
	&& |(z-w)^{\tran}\cdot\Hess_\rl\psi_4(w)\cdot(z-w)|\notag \\
		&&\qquad\qquad \leq  e_2(w)||z-w||^2+ c_2(|z_1-w_1||z_2-w_2|+|z_2-w_2|^2),
	\eea
where $ e_1\in\cont( U)$ with $e_1(0)=0$, and $ c_1>0$ is a constant. Combining \eqref{eq_psiest2}, \eqref{eq_darbest} and \eqref{eq_psi4est} (and adding $c'|\Psi_1|$, $ e_1$ and $ e_2$), we have that 
	\bea\label{eq_psiest3}
		&&\quad \lefteqn{ \mathcal{A}
		:=\left|\lbr\partial\rho^\lam(\Psi(w)),\Psi(z)-\Psi(w)-\JacR\Psi(w)(z-w)\rbr
		\right|}\\
		&& \leq( e_3(w)+ D_3(z-w))||z-w||^2+ c_3(|z_1-w_1||z_2-w_2|+|z_2-w_2|^2), \notag
	\eea
on $\{(z,w)\in\rl^4\times U:||z-w||\leq\tau_3\}$, for some $ e_3\in\cont( U)$ with $e_3(0)=0$, $ D_3(\zt)=o(1)$ as $|\zt|\rightarrow 0$, and constants $ c_3,\tau_3>0$. 

Next, we have that 
	\bea
		|\Psi_2(z)-\Psi_2(w)|
		&=&2\left|\lbr\partial\rho^\lam(\Psi(w)),\Psi(z)-\Psi(w)\rbr
		-\conj{\Psi_1(z)}(\Psi_1(z)-\Psi_1(w))\right|\notag \\
		&\leq&
		 c_4\left|\lbr\partial\rho^\lam(\Psi(w)),\Psi(z)-\Psi(w)\rbr\right|
		+ e_4(w)||z-w||, \label{eq_psi2est}
	\eea
on $\{(z,w)\in\rl^4\times U:||z-w||\leq\tau_4\}$, for some choice of $ e_4$, $ c_4$ and $\tau_4$ as before. Also, if $\Psi^{-1}=(\hat\psi_1,\hat\psi_2,\hat\psi_3,\hat\psi_4)$, then $\JacR\hat\psi_3(0)=(0,0,1,0)$ and $\JacR\hat\psi_4(0)=(0,0,0,1)$. So, we are permitted to conclude that
	\be\label{eq_z2w2est}
		|z_2-w_2|\leq  c_4|\Psi_2(z)-\Psi_2(w)|+( e_5(w)+ D_5(z-w))||z-w||,
	\ee
on $\{(z,w)\in\rl^4\times U:||z-w||\leq\tau_5\}$, for some $ e_5$, $ c_5$, $ D_5$ and $\tau_5$ as before.
 
Finally, as $\alpha_1(0)=\alpha_2(0)=1$, \eqref{eq_psiest1}, \eqref{eq_psiest3}, \eqref{eq_psi2est} and \eqref{eq_z2w2est} combine to give $e$, $c$, $D$ and $\tau$ with the required properties, such that
	\begin{flalign}
		&|\mathfrak{l}_\rho(z,w)-\mathfrak{l}_\lam(\Psi(z),\Psi(w))|\notag \\
		&\leq\left|\lbr\partial\rho(w),z-w\rbr
				-\lbr\partial\rho^\lam(\Psi(w)),\JacR\Psi(w)(z-w)\rbr\right|+\mathcal{A},\notag
					\\ 
		&\leq( e(w)+ D(z-w))\left(|\mathfrak{l}_\lam(\Psi(z),\Psi(w))|+||z-w||^2\right)
				+ c|\mathfrak{l}_\lam(\Psi(z),\Psi(w))|^2\notag
	\end{flalign}
on $\{(z,w)\in\rl^4\times U:||z-w||\leq\tau\}$.
\end{proof}

\noindent{\bf Convexification.} Now, we return to general strongly pseudoconvex domains. Assume $0\in\bdy\Om$ and the outward unit normal vector to $\bdy\Om$ at $0$ is $(0,-i)$. Let $\rho$ be a $\cont^2$-smooth strictly plurisubharmonic defining function of $\Om$ such that $||\nabla\rho(0)||=1$. Now, $\rho$ has the following second-order Taylor expansion about the origin:
	\bes
		\rho(w)=\ima\left(-w_2
		+i\sum_{j,k=1}^{2}\secpartl{\rho(0)}{z_j}{z_k}w_jw_k\right)
		+\sum_{j,k=1}^{2}\secpartl{\rho(0)}{z_j}{\conj{z_k}}w_j\conj{w_k}
		+o(|w|^2).
	\ees
Using a classical trick, attributed to Narasimhan, we convexify $\Om$ near the origin via the map $\Phi$ given by:
	\beas
		w_1&\mapsto& \Phi_1(w)=w_1\\
		w_2&\mapsto& \Phi_2(w)=w_2
		-i\sum_{j,k=1}^{2}\secpartl{\rho(0)}{z_j}{z_k}w_jw_k.
	\eeas
Owing to the inverse function theorem, $\Phi$ is a local biholomorphism on some neighborhood $U$ of $0$. We may further shrink $U$ so that the strong convexity of $\Phi(\bdy\Om\cap U)$ at $0$ propagates to all of $\Phi(\bdy\Om\cap U)$. We collect the following key observations:
	\begin{itemize}
		\item $\JacR\Phi(0)=\id$;
		\item If $\hat\rho:=\rho\circ\Phi^{-1}$, then $	\hat\rho(w)=-\ima w_2+
				\sum\limits_{j,k=1}^{2}\secpartl{\rho(0)}{z_j}{\conj{z_k}}w_j\conj{w_k}
				+o(|w|^2).$
		\item Let $\mathfrak{p}$ denote the Levi-polynomial of $\rho$, 
				$\mathfrak{l}_{\hat\rho}(z,w)$ be the Cauchy-Leray map of $\hat\rho$, and $\partial\hat\rho(\Phi(w))$ denote $\partial\hat\rho$ evaluated at $\Phi(w)$. Then, for any 
				neighborhood $U_1\Subset U$ of the origin, there is a $\tau>0$ 
				such that, on $\{(z,w)\in\CC\times U_1:||z-w||\leq\tau\}$,
			\begin{flalign}\label{eq_LeviLer}
				&|\mathfrak{p}(z,w)-\mathfrak{l}_{\hat\rho}(\Phi(z),\Phi(w))| 
				=|\mathfrak{p}(z,w)-\mathfrak{l}_{\hat\rho}(\Phi(z),\Phi(w))| 
				 \\
				&\leq  \left|\lbr\partial\rho(w),z-w\rbr
				-\lbr\partial\hat\rho(\Phi(w)),\JacR\Phi(w)(z-w)\rbr\right|\notag \\
				&\qquad
 				+\frac{1}{2}\left|\sum_{j,k=1}^{2}\left(\secpartl{\rho(w)}{z_j}{z_k}
				+2i\partl{\hat\rho(\Phi(w))}{w_2}\secpartl{\rho(0)}{z_j}{z_k}
				\right)(z_j-w_j)(z_k-w_k)\right|\notag\\
				& \leq \left|\lbr\partial\rho(w),z-w\rbr
				-\lbr\Phi^*(\partial\hat\rho)(w),z-w\rbr\right|\notag\\
				&\qquad
				+\frac{1}{2}\left|\sum_{j,k=1}^{2}\left(\secpartl{\rho(0)}{z_j}{z_k}
				+o(1)	+\left(-1+o(|w|)\right)
				\secpartl{\rho(0)}{z_j}{z_k}\right)(z_j-w_j)(z_k-w_k)\right|\notag\\
				& \leq e(w)||z-w||^2,\notag
			\end{flalign}
for some $e\in\cont(U)$ with $e(0)=0$.
\end{itemize}

\noindent{\bf Main Local Estimate.} We combine the maps constructed above:
\begin{lemma}\label{lem_locests} Fix an $\eps>0$. Let $\Om\subset\CC$ be a $\cont^4$-smooth strongly pseudoconvex domain and $\rho$ a strictly plurisubharmonic defining function of $\Om$. Assume that $0\in\bdy\Om$, $\nabla\rho(0)=(0,0,0,-1)$ and $M(\rho)(0)=\lam$. Then, there exists a neighborhood $U$ of the origin, a $\cont^2$-smooth origin-preserving diffeomorphism $\Theta$ on $U$ that carries $\conj\Om\cap U$ onto $\conj {\mathcal{S}_\lam}\cap \Theta(U)$, and a constant $\tau>0$ such that 
	\begin{itemize}
	\item $1-\eps\leq\dfrac{\vol(\Theta(V))}{\vol(V)}\leq \dfrac{1}{1-\eps}$, for every 
			Jordan measurable $V\subset U$; 
	\item $1-\eps\leq\dfrac{\rvol(\Theta(J)')}{\rvol(J')}\leq\dfrac{1}{1-\eps}$, for every Jordan 
			measurable $J\subset \bdy\Om\cap U$; and  
	\item if $\mathfrak{p}$ is the Levi polynomial of $\rho$ and $\mathfrak{l}_\lam$ is the Cauchy-Leray map of 
			$\rho^\lam$, then
			\bes
			 |\mathfrak{p}(z,w)-\mathfrak{l}_\lam(\Theta(z),\Theta(w))|
			\leq \eps(|\mathfrak{p}(z,w)|+|\mathfrak{l}_\lam(\Theta(z),\Theta(w))|)
			\ees
			 on $(U\times U)\cap\Om_\tau$. 
	\end{itemize}
\end{lemma}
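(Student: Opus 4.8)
The plan is to build $\Theta$ as the composition $\Theta:=\Psi\circ\Phi$ of two maps already constructed above: the Narasimhan convexification $\Phi$ from the ``Convexification'' paragraph preceding this lemma, and the Darboux-type diffeomorphism $\Psi$ furnished by Lemma~\ref{lem_darbext}. First, since $\nabla\rho(0)=(0,0,0,-1)$, the ``Convexification'' paragraph applies verbatim: $\Phi$ is a local biholomorphism near $0$ with $\JacR\Phi(0)=\id$, and $\hat\rho:=\rho\circ\Phi^{-1}$ is convex near $0$ with expansion $\hat\rho(\zeta)=-\ima\zeta_2+\sum_{j,k}\rho_{z_j\bar z_k}(0)\zeta_j\bar\zeta_k+o(|\zeta|^2)$; in particular $\hat\rho$ has the normal form \eqref{eq_cvxdef2}, and a direct computation of the Levi determinant in these normalized coordinates identifies its leading parameter with $\lam=M(\rho)(0)$. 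The estimate \eqref{eq_LeviLer} then gives, near the diagonal, $|\levip(z,w)-\cler_{\hat\rho}(\Phi(z),\Phi(w))|\le e(w)||z-w||^2$ with $e$ continuous, $e(0)=0$, where $\levip$ is the Levi polynomial of $\rho$ and $\cler_{\hat\rho}$ the Cauchy--Leray map of $\hat\rho$.

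Applying Lemma~\ref{lem_darbext} to the convexified domain $\hat\Om:=\Phi(\Om\cap U)$ with defining function $\hat\rho$ produces a $\cont^2$-diffeomorphism $\Psi$ near $0$ with $\Psi(0)=0$, carrying $\conj{\hat\Om}$ onto $\conj{\mathcal{S}_\lam}$ locally and $\bdy\hat\Om$ onto $\bdy\mathcal{S}_\lam$, with $\det\JacR\Psi(0)=\det\JacR\Psi^{\operatorname{res}}(0)=1$, and with an inequality of the shape \eqref{eq_darbestmain} comparing $\cler_{\hat\rho}$ to $\cler_\lam\circ(\Psi\times\Psi)$. Put $\Theta:=\Psi\circ\Phi$ on a neighborhood $U$ of the origin small enough that all the maps and estimates above are valid on $U$; then $\Theta$ is an origin-preserving $\cont^2$-diffeomorphism carrying $\conj\Om\cap U$ onto $\conj{\mathcal{S}_\lam}\cap\Theta(U)$. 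Since $\JacR\Phi(0)=\id$ we get $\JacR\Theta(0)=\JacR\Psi(0)$, so $\det\JacR\Theta(0)=1$; and since $\bdy\Om$ and $\bdy\mathcal{S}_\lam$ are graphs over $\C\times\rl$ whose defining functions have vanishing gradient at $0$, the map $J'\mapsto\Theta(J)'$ induced on the projected boundary patches has Jacobian at $0$ equal to the upper-left $3\times3$ block of $\JacR\Theta(0)=\JacR\Psi(0)$, hence of determinant $\det\JacR\Psi^{\operatorname{res}}(0)=1$. Shrinking $U$ and using continuity of these two Jacobian determinants together with the change-of-variables formula then yields the first two bullets.

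For the third bullet I would chain the two error estimates: applying \eqref{eq_LeviLer} directly and \eqref{eq_darbestmain} with arguments $\Phi(z),\Phi(w)$ (using that $\Phi$ is bi-Lipschitz, so $||\Phi(z)-\Phi(w)||$ is comparable to $||z-w||$) and writing $\cler_\lam(\Theta(z),\Theta(w))$ for $\cler_\lam(\Psi(\Phi(z)),\Psi(\Phi(w)))$, one gets near the diagonal
\[
|\levip(z,w)-\cler_\lam(\Theta(z),\Theta(w))|\le\big(\tilde e(w)+\tilde D(z-w)\big)\big(|\cler_\lam(\Theta(z),\Theta(w))|+||z-w||^2\big)+c\,|\cler_\lam(\Theta(z),\Theta(w))|^2,
\]
with $\tilde e$ continuous, $\tilde e(0)=0$, $\tilde D(\zeta)=o(1)$ as $\zeta\to0$, and $c>0$. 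To upgrade this to the claimed multiplicative bound I would invoke Lemma~\ref{lem_Leviest} ($||z-w||^2\le c_0|\levip(z,w)|$ near the diagonal), the analogous inequality $||z-w||^2\le c_1|\cler_\lam(\Theta(z),\Theta(w))|$ for $\mathcal{S}_\lam$ — which follows directly from the identity $-2\rea\,\cler_\lam(z,w)=-\rho^\lam(z)+\lam|z_1-w_1|^2$ (recall $\cler_\lam$ is itself the Levi polynomial of $\rho^\lam$), the sign of $\rho^\lam$ on $\conj{\mathcal{S}_\lam}$, and the boundedness of $\Theta(U)$ — and the trivial bound $|\cler_\lam(\Theta(z),\Theta(w))|^2\le C\tau\,|\cler_\lam(\Theta(z),\Theta(w))|$ valid when $||z-w||\le\tau$. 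Together these reduce the right-hand side to $\big(C\tilde e(w)+C\tilde D(z-w)+C\tau\big)\big(|\levip(z,w)|+|\cler_\lam(\Theta(z),\Theta(w))|\big)$; shrinking $U$ so that $\tilde e$ is small on $U$, and $\tau$ so that $\tilde D$ and $C\tau$ are small, makes the prefactor at most $\eps$, which is the assertion.

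The genuinely hard constructions — the contactomorphism $\Pi$ of Lemma~\ref{lem_darb} and the estimates of Lemmas~\ref{lem_darbest} and \ref{lem_darbext} — are already in hand, so within this proof the load-bearing work is organisational: forcing one neighborhood $U$ and one radius $\tau$ to support all of the cited estimates simultaneously, and transporting the various ``$o(\cdot)$'' and ``vanishing at $0$'' error terms correctly through the composition $\Psi\circ\Phi$. The one substantive point is the final passage from the additive error bound to the multiplicative $\eps$-bound, which is precisely where the geometry (Lemma~\ref{lem_Leviest} and its $\mathcal{S}_\lam$-analogue) enters; the one delicate computational check is confirming that the model parameter attached to $\hat\Om$ is exactly $M(\rho)(0)$.
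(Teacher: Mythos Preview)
Your proposal is correct and follows essentially the same approach as the paper: define $\Theta=\Psi\circ\Phi$, read off the mapping and volume-distortion properties from those of $\Psi$ and $\Phi$, and obtain the third bullet by chaining \eqref{eq_LeviLer}, \eqref{eq_darbestmain}, and \eqref{eq_Leviest}. You supply considerably more detail than the paper's three-line proof --- in particular your explicit reduction of the additive error to the multiplicative $\eps$-form via $||z-w||^2\le c_0|\levip(z,w)|$ and $|\cler_\lam|^2\le C\tau|\cler_\lam|$ --- but the argument is the intended one.
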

\begin{proof} The needed map is $\Psi\circ\Phi$ (from Lemma \ref{lem_darbext} and the convexification procedure above). The mapping and volume distortion properties follow from those of $\Psi$ and $\Phi$. The estimate is a combination of \eqref{eq_LeviLer}, \eqref{eq_darbestmain} and \eqref{eq_Leviest}.
\end{proof}

The following lemma is an application of Lemma \ref{lem_main} and gives us a local version of our main theorem. 
\begin{lemma}\label{lem_locfin} Let $\Om$, $f$ and $\rho$ be as in Theorem \ref{thm_MAIN}. Fix an $\eps\in(0,1/3)$ and a point $q\in\bdy\Om$. Then, there exists a neighborhood $U_{q,\eps}$ of $q$ such that for every Jordan measurable pair $J, H\subset \bdy\Om\cap U_{q,\eps}$ such that $J\subset\inte_{\bdy\Om} H$,
	\beas
	  (1-\eps)^{31}\ \lkor
		 \frac{\lam(q)^{\frac{1}{2}}
			 s(J)^{\frac{3}{2}}}{\sqrt{n}}
	 \leq 
		 v(\Om;\polcl{J\subset H;f}{n})
		\leq (1-\eps)^{-19}\  \lkor\frac{\lam(q)^{\frac{1}{2}} s(J)^{\frac{3}{2}}}{\sqrt{n}}
	\eeas
for sufficiently large $n$, where $\lam(q):=\dfrac{4M(\rho)(q)}{||\nabla\rho(q)||^3}$ and $s$ is the Euclidean surface area measure on $\bdy\Om$.  
\end{lemma}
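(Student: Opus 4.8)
The plan is to reduce the statement, via Lemma \ref{lem_main}, to the model computation carried out in Corollary \ref{cor_levi}. The local diffeomorphism $\Theta$ produced by Lemma \ref{lem_locests} is the key device: it transports the problem near $q$ to the model domain $\mathcal{S}_\lambda$ with $\lambda=M(\rho)(q)$ (after the normalization $\nabla\rho(q)=(0,0,0,-1)$, so that $\lambda(q)=4M(\rho)(q)$ and $s(J)=\rvol(J')$ up to the volume-distortion factors), while comparing the Levi polynomial $\mathfrak p$ of $\rho$ with the pulled-back Cauchy--Leray map $\mathfrak l_\lambda\circ(\Theta\times\Theta)$ to within a multiplicative error $\eps$.

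First I would normalize: after an affine change of coordinates (which only rescales $\vol$, $\rvol$ and $M(\rho)$ in a controlled, explicit way, absorbed into the constant $\lambda(q)$), assume $0=q\in\bdy\Om$, $\nabla\rho(0)=(0,0,0,-1)$ and $M(\rho)(0)=\lambda$. Invoke Lemma \ref{lem_locests} with the given $\eps$ to obtain $U=U_{q,\eps}$, the map $\Theta$, and $\tau>0$ so that (a) $\Theta$ distorts $\vol$ and $\rvol(\,\cdot\,')$ by at most $(1-\eps)^{\pm1}$, and (b) $|\mathfrak p(z,w)-\mathfrak l_\lambda(\Theta z,\Theta w)|\le\eps(|\mathfrak p(z,w)|+|\mathfrak l_\lambda(\Theta z,\Theta w)|)$ on $(U\times U)\cap\Om_\tau$. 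Next, by hypothesis (ii) of Theorem \ref{thm_MAIN}, $f(z,w)=a(z,w)\mathfrak p(z,w)+O(\mathfrak p^\eta)$ with $a$ continuous and non-vanishing; shrinking $U$ if necessary, $a$ is within $(1\pm\eps)$ of $a(q,q)$, and the $O(\mathfrak p^\eta)$ term is dominated by $\eps|\mathfrak p|$ (using $|\mathfrak p(z,w)|\to0$ as $\|z-w\|\to0$, cf. Lemma \ref{lem_Leviest}). Hence on a thin tube around $H$, $f(z,w)$ and $\mathfrak p(z,w)$ — and therefore $f(z,w)$ and $g(z,w):=a(q,q)\,\mathfrak l_\lambda(\Theta z,\Theta w)$ — satisfy an inequality of type \eqref{ineq_f-g} with a slightly enlarged constant, say $3\eps$ (adjust $\eps$ at the outset so this is still $<1/3$). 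Using Remark \ref{rmk_locglob}, both $f$ and $g$ may be modified away from the diagonal so that Lemma \ref{lem_main} applies verbatim.

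Then I would verify the Jordan-measurability and doubling hypothesis \eqref{eq_doub} for $g$: the cuts of $g$ are pullbacks under $\Theta^{-1}$ of the model cuts $C(w,\delta;f_{\mathcal{S}_\lambda})$, whose projections are Korányi balls (by \eqref{eq_proj_cuts}); these are semialgebraic, hence Jordan measurable, and the pushforward/pullback under the bi-Lipschitz $\Theta$ preserves this. The doubling property for unions of model cuts is exactly where the Heisenberg geometry enters — it follows from the explicit volume formula \eqref{eq_vol} together with the admissibility/covering input referenced as Lemma \ref{lem_adm} (this is the step the paper flags as needing the appendix). With \eqref{eq_doub} in hand for $g$ with a doubling function $D$ satisfying $D(t)\to1$ as $t\to0$, Lemma \ref{lem_main} (with $\beta=1/2$) gives
\begin{equation*}
D\!\left(\tfrac{(1+3\eps)^4}{(1-3\eps)^4}-1\right)^{-1}\!\!\liminf_n \sqrt n\,v_n(g)\ \le\ \liminf_n\sqrt n\,v_n(f)\ \le\ \limsup_n\sqrt n\,v_n(f)\ \le\ D\!\left(\tfrac{(1+3\eps)^2}{(1-3\eps)^2}-1\right)\!\limsup_n\sqrt n\,v_n(g).
\end{equation*}
Finally, $v_n(g)$ is, up to the constant scaling by $a(q,q)$ (which drops out since cuts only see $|g|>\delta$, i.e. rescaling $\delta$) and up to the $\Theta$-volume distortions, the model quantity $v_n(\mathcal{S}_\lambda;\Theta J\subset\Theta H)$, which by Corollary \ref{cor_levi} is asymptotic to $\lambda^{1/2}\rvol((\Theta J)')^{3/2}\lkor/\sqrt n$. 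Substituting the distortion bounds $(1-\eps)\le\rvol((\Theta J)')/\rvol(J')\le(1-\eps)^{-1}$ and $(1-\eps)\le\vol(\Theta\,\cdot)/\vol(\cdot)\le(1-\eps)^{-1}$, converting $\rvol(J')$ to $s(J)$ via the normalization of $\nabla\rho(q)$, and bounding all the factors $D(\cdot)$, $(1\pm\eps)$, $(1-\eps)^{\pm1}$ crudely by powers of $(1-\eps)$, yields the claimed two-sided bound with exponents $31$ and $-19$ (the precise exponents are just bookkeeping of how many $(1-\eps)^{\pm1}$-type factors accumulate; any safe overestimate suffices).

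The main obstacle is the doubling property \eqref{eq_doub} for finite unions of model (Korányi-ball) cuts with a doubling function tending to $1$ at $0$: unlike a single ball, controlling $\vol(\bigcup_j C(w^j,(1+t)\delta_j))$ by $D(t)\vol(\bigcup_j C(w^j,\delta_j))$ uniformly in the configuration requires the Heisenberg-group covering/Besicovitch-type machinery of the appendix (Lemma \ref{lem_adm}); everything else is a careful but routine tracking of multiplicative $(1\pm\eps)$ errors through the chain $f\rightsquigarrow\mathfrak p\rightsquigarrow g\rightsquigarrow$ (model).
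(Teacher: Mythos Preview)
Your proposal is correct and follows essentially the same route as the paper's own proof: normalize at $q$, invoke Lemma~\ref{lem_locests} to obtain the local diffeomorphism $\Theta$, compare $f$ to the pulled-back model function via Lemma~\ref{lem_main} (with the doubling hypothesis~\eqref{eq_doub} supplied by Lemma~\ref{lem_adm} from the appendix, transported through $\Theta$), apply Corollary~\ref{cor_levi}, and track the accumulated $(1-\eps)^{\pm1}$ factors. The only cosmetic differences are that the paper uses $\tilde g(z,w)=a(w,w)\,\cler_{\lambda(q)}(\Theta_q z,\Theta_q w)$ rather than your constant $a(q,q)$-scaling, pre-shrinks $\eps$ (writing $\hat\eps=c\eps$) so the final comparison constant is exactly $\eps$ instead of $3\eps$, and records the pulled-back doubling function as $D(t)=(1-\eps)^{-2}(1+t)^3$ (so $D(0)=(1-\eps)^{-2}$, not $1$ --- a harmless extra factor absorbed into the bookkeeping).
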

\begin{proof} First, we set $\hat\eps=c\eps$, where $c<1$ will be revealed later. Let $\rho$ be the strictly plurisubharmonic defining function of $\Om$ for which \eqref{eq_buli} in Theorem \ref{thm_MAIN} holds. Let $A:\CC\rightarrow\CC$ be a holomorphic isometry that takes $q$ to the origin and the outer unit normal at $q$ to $(0,-i||\nabla\rho(q)||)$. Set $\hat\rho(z):=||\nabla\rho(q)||^{-1}\rho(A^{-1}z)$. Then, $A(\Om)$ and 
$\hat\rho$ satisfies the hypotheses of Lemma \ref{lem_locests}, with $M(\hat\rho)(0)=\lam(q)$. Suppose $\Theta$, $U$ and $\tau$ are the map, neighborhood and constant, respectively, granted by Lemma \ref{lem_locests}, and $\hat\levip$ is the Levi polynomial of $\hat\rho$. Then,
		\be\label{eq_conslem}
			 |\hat\levip(z,w)-\mathfrak{l}_{\lam(q)}(\Theta(z),\Theta(w))|
			\leq \hat\eps(|\hat\levip(z,w)|+|\mathfrak{l}_{\lam(q)}(\Theta(z),\Theta(w))|)
			\ee
			 on $(U\times U)\cap A(\Om)_\tau$. Also note that
	\be\label{eq_modlevi} 
		||\nabla\rho(q)||\hat\levip(Az,Aw)=\levip(z,w).
	\ee 

Next, set $U_q:=A^{-1}(U)$ and $\Theta_q:=\Theta\circ A$. Note that $\Theta_q$ maps $\conj \Om$ to $\conj {\mathcal{S}}_{\lam(q)}$ locally near $q$. We define 
	\bea
		\tilde f(z,w)&:=&\frac{f(z,w)}{||\nabla\rho(q)||};\label{eq_tildf}\\
		g(z,w)&:=&f_{\mathcal{S}_{\lam(q)}}\big(\Theta_qz,\Theta_qw\big);\ \text{and}\label{eq_g}\\
		\tilde g(z,w)&:=& a(w,w)\cler_{\lam(q)} \big(\Theta_qz,\Theta_qw\big)=a(w,w)\frac{i}{2\lam(q)}f_{\mathcal{S}_{\lam(q)}}\big(\Theta_qz,\Theta_qw\big).\label{eq_tildg}
	\eea
So, when defined,
	\bea
		C(w,\de;\tilde f)&=&C\left(w, ||\nabla\rho(q)||\de;f\right);\ \text{and} \label{eq_fcuts}\\
		C(w,\de;\tilde g)&=&C\left(w,\frac{2\lam(q)}{|a(w,w)|}\de;g\right).
		\label{eq_gcuts}
	\eea
Thus, for our point of interest, there is little difference between $f$ and $\tilde f$ (and, between $g$ and $\tilde g$). Keeping this observation in mind, we will apply Lemma \ref{lem_main} to $\tilde f,\tilde g\in\cont(\conj\Om\times (\bdy\Om\cap U_q))$ (see Remark \ref{rmk_locglob}). To bound $|\tilde f(z,w)-\tilde g(z,w)|$ from above, we estimate $|\tilde f(z,w)-a(z,w)\hat\levip(Az,Aw)|$, $|a(z,w)\hat\levip(Az,Aw)-a(z,w)\cler_{\lam(q)} \big(\Theta_qz,\Theta_qw\big)|$ and $|a(z,w)\cler_{\lam(q)} \big(\Theta_qz,\Theta_qw\big)-\tilde g(z,w)|$.
  
By \eqref{eq_buli}, we can find a $\tau_1\in(0,\tau]$ such that 
	\be\label{eq_fandlevi}
		|\tilde f(z,w)-a(z,w)\hat\levip(Az,Aw)|
		=\frac{|f(z,w)-a(z,w)\levip(z,w)|}{||\nabla\rho(q)||}\leq  \frac{\hat\eps}{||\nabla\rho(q)||}|\levip(z,w)|\ \ \ \text{on}\ \Om_{\tau_1}.
	\ee
By Lemma \ref{lem_locests}, \eqref{eq_modlevi}, \eqref{eq_tildg} and the continuity of $a$ on $\conj\Om_\tau$, we shrink $\tau_1$ so that on $(U_q\times U_q)\cap\Om_{\tau_1}$,
	\bea\label{eq_leviandler}
		&&|a(z,w)\hat\levip(Az,Aw)-a(z,w)\cler_{\lam(q)} \big(\Theta_qz,\Theta_qw\big)|\notag \\	
		&&\leq|a(z,w)||\hat\levip(Az,Aw)-\cler_{\lam(q)} \big(\Theta_qz,\Theta_qw\big)| \notag \\
		&&\leq \hat\eps|a(z,w)|\left(|\hat\levip(Az,Aw)|+|\cler_{\lam(q)} \big(\Theta_qz,\Theta_qw\big)|\right)\notag\\
		&&= \hat\eps|a(z,w)|\left(\frac{|\levip(z,w)|}{||\nabla\rho(q)||}+\frac{|\tilde g(z,w)|}{|a(w,w)|}\right)\notag\\
		&&\leq\hat\eps \left(\frac{\max_{\Om_\tau}|a(z,w)|}{||\nabla\rho(q)||}\right)|\levip(z,w)|
			+\hat\eps \left(\frac{\max_{\Om_\tau}|a(z,w)|}{\min_{\bdy\Om}|a(w,w)|}\right)|\tilde g(z,w)|,
	\eea
and
	\bea\label{eq_lerandg}
		|a(z,w)\cler_{\lam(q)} \big(\Theta_qz,\Theta_qw\big)-\tilde g(z,w)|&=&|a(z,w)-a(w,w)|
				\cdot|\cler_{\lam(q)} \big(\Theta_qz,\Theta_qw\big)|\notag \\
		&\leq& \frac{\hat\eps}{\min_{\bdy\Om}|a(w,w)|}|\tilde g(z,w)|.
	\eea
Lastly, by \eqref{eq_buli}, there exist $\tau_2\in(0,\tau_1]$ and $l>0$ such that 
	\be\label{eq_leviandf}
		|\mathfrak{p}(z,w)|\leq l|\tilde f(z,w)|\ \ \ \text{on}\ \Om_{\tau_2}.
	\ee
Now, set 
	\bes
		c=\frac{1}{2}\min\left\{1,\frac{||\nabla\rho(q)||}{l},
								\frac{||\nabla\rho(q)||}{l\max_{\Om_\tau}|a(z,w)|},
								\frac{\min_{\bdy\Om}|a(w,w)|}{\max_{\Om_\tau}|a(z,w)|},
								\min_{\bdy\Om}|a(w,w)|\right\}.
	\ees 
Then, adding \eqref{eq_fandlevi}, \eqref{eq_leviandler} and \eqref{eq_lerandg}, and using \eqref{eq_leviandf}, we get
	\bes
		|\tilde f(z,w)-\tilde g(z,w)|\leq \eps\left(|\tilde f(z,w)|+|\tilde g(z,w)|\right)\qquad \text{on}\ (U_q\times U_q)\cap\Om_{\tau_2}.
	\ees

We now need to show that $\tilde g$ satisfies the remaining hypotheses of Lemma \ref{lem_main}. But these are conditions on the cuts of $\tilde g$, which are identical to the cuts of $g$ (by \eqref{eq_gcuts}). So, we work with $g$ instead. Let $U_{q,\eps}\Subset U_q$ be an open neighborhood of $q$, and $\de_0>0$ be such that $C(w,\de;g)\subset V_q$ for all $w\in U_{q,\eps}\cap\bdy\Om$ and $\de<\de_0$. Then, there is a diffeomorphism
	\be\label{eq_cutmap}
		\Theta_q=\Theta\circ A:C(w,\de;g)\rightarrow C\big(\Theta_qw,\de;f_{\mathcal{S}_{\lam(q)}}\big),
	\ee
for  $w\in U_{q,\eps}\cap\bdy\Om$ and $\de< \de_0$. Therefore, exploiting Lemma \ref{lem_adm}, we get
	\begin{enumerate}
		\item  $C(w,\de;g)$ is Jordan measurable for all  $w\in U_{q,\eps}\cap\bdy\Om$ and $\de<\de_0$; 
		\item If $w^1,...,w^m\in  U_{q,\eps}\cap\bdy\Om$, $m\in\pnat$, then
			\beas
				\vol\left(\bigcup_{j=1}^mC(w^j,(1+t)\de;g)\right)
					&\leq& \frac{1}{1-\eps}\vol\left(\bigcup_{j=1}^mC(\Theta_qw^j,(1+t)\de;f_{\mathcal{S}_{\lam(q)}})\right)\\
				&\leq& \frac{(1+t)^3}{1-\eps}\vol\left(\bigcup_{j=1}^mC(\Theta_qw^j,\de;f_{\mathcal{S}_{\lam(q)}})\right)\\	
				&\leq &\dfrac{(1+t)^3}{(1-\eps)^2}\vol\left(\bigcup_{j=1}^mC(w^j,\de;g)\right),
			\eeas
for all $t\in(0,16)$ and $\de_j\leq \de_0/16$, $j=1,...,m$. Thus, $g$ satisfies the doubling property \eqref{eq_doub} with
quantifiers $\de_{g}=\de_0/16$ and $D(t)=(1-\eps)^{-2}(1+t)^3$. 
\end{enumerate}

Lastly, we further shrink $U_{q,\eps}$ --- if necessary --- to ensure that for any $s$-measurable set $J\subset(U_{q,\eps}\cap\bdy\Om)$,	
		\be\label{eq_dist}
			1-\eps\leq \frac{s(J)}{\rvol(J'')}\leq \dfrac{1}{1-\eps},	
		\ee
where $J''$ is the orthogonal projection of $J$ onto $T_q\bdy\Om$, and by $\rvol(J'')$, we really mean $\rvol(A(J''))$.

We are now ready to estimate. Consider Jordan measurable compact sets $J\subset H\subset( U_{q,\eps}\cap\bdy\Om)$ such that $J\subset\inte_{\bdy\Om} H$. By \eqref{eq_fcuts}, \eqref{eq_f<=g} from Lemma \ref{lem_main}, \eqref{eq_gcuts}, the volume-distortion properties of $\Theta_q$ --- see Lemma \ref{lem_locests} and recall that $A$ is an isometry --- Corollary \ref{cor_levi}  and \eqref{eq_dist}, we have that
	\beas
		&&\limsup_{n\rightarrow \infty}\sqrt{n}\ v(\Om;\polcl{J\subset H;f}{n})
		=\limsup_{n\rightarrow \infty}\sqrt{n}\ v(\Om;\polcl{J\subset H;\tilde f}{n})\\
		&&\leq \dfrac{1}{(1-\eps)^2}\left(1+\frac{(1+\eps)^2}{(1-\eps)^2}-1\right)^3
		\limsup_{n\rightarrow \infty}\sqrt{n}\ v(\Om;\polcl{J\subset H;\tilde g}{n})\\
		&&= \dfrac{1}{(1-\eps)^2}\left(1+\frac{(1+\eps)^2}{(1-\eps)^2}-1\right)^3
		\limsup_{n\rightarrow \infty}\sqrt{n}\ v(\Om;\polcl{J\subset H; g}{n})\\	
		&&\leq(1-\eps)^{-14}
		\limsup_{n\rightarrow \infty}\sqrt{n}\ (1-\eps)^{-1}
		v(\mathcal{S}_{\lam(q)};\polcl{\Theta_q J\subset\Theta_q H;f_{\mathcal{S}_{\lam(q)}}}{n})\\
		&&\leq(1-\eps)^{-15}\ \lkor\lam(q)^\frac{1}{2}\rvol((\Theta_q J)')^\frac{3}{2}\\
		&&\leq (1-\eps)^{-\frac{33}{2}}\ \lkor\lam(q)^\frac{1}{2}\rvol(J'')^\frac{3}{2}
			\leq(1-\eps)^{-18}\ \lkor\lam(q)^\frac{1}{2}s(J)^\frac{3}{2}.
	\eeas
By a similar argument, but now using $\eqref{eq_f>=g}$ from the statement of Lemma \ref{lem_main}, we get that
	\bes
		\lim_{n\rightarrow \infty}\sqrt{n}\ v(\Om;\polcl{J\subset H;f}{n})
		\geq  (1-\eps)^{30}\ \lkor
\lam(q)^\frac{1}{2}s(J)^\frac{3}{2}.
	\ees
Therefore, for large enough $n$, we get the desired estimates.  
\end{proof}

\section{Proof of Theorem \ref{thm_MAIN}}\label{sec_mainproofs}
\noindent{\em Proof of Theorem \ref{thm_MAIN}}. Fix an $\eps\in(0,1/3)$. There exists a tiling $\{L_j\}_{1\leq j\leq m}$ of $\bdy\Om$ consisting of Jordan measurable compact sets with non-empty interior such that 
	\begin{itemize}
		\item for each $j=1,...,m$, there is a $q_j\in L_j$ for which $L_j\subset U_{q_j,\eps}$, where the latter comes from Lemma \ref{lem_locfin};
		\item $(1-\eps)\lam(q)\leq \lam(q_j)\leq (1-\eps)^{-1}\lam(q)$, for all $q\in L_j$.
	\end{itemize}
Then, recalling that $\lam(q)=\dfrac{4M(\rho)(q)}{||\nabla\rho(q)||^3}$, we obtain estimates as follows: 
\bea
		4^{-\frac{1}{3}}\int_{\bdy\Om}\sigma_\Om
		 =\int_{\bdy\Om} 4^{\frac{1}{3}}M(\rho)(q)^{\frac{1}{3}}
		\frac{ds(q)}{||\nabla\rho(q)||}
				=\sum_{j=1}^m\int_{L_j} \lam(q_j)^\frac{1}{3}ds(q_j)\notag \\
			\left\{ \begin{aligned}
				 \leq (1-\eps)^{-1}\sum\limits_{j=1}^m 
							\lam(q_j)^{\frac{1}{3}}s(L_j) \\ 
			\geq (1-\eps)\sum\limits_{j=1}^m 
							\lam(q_j)^{\frac{1}{3}}s(L_j). \label{eq_disc}
		\end{aligned} \right.
	\eea
Next, for all $j=1,...,m$, we choose compact Jordan measurable sets $J_j$ and $H_j$ such that $J_j\subset \inte_{\bdy\Om} L_j\subset \inte_{\bdy\Om} H_j\subset U_{q_j,\eps}$ and 
	\be\label{eq_JKest}
		s(J_j)\geq (1-\eps) s(L_j).
	\ee

$\bm{1.}$ We first estimate $v(\Om;\polcl{f}{n})$ from above. For $j=1,...,m$, choose $P^j\in\polcl{L_j\subset H_j;f}{n_j}$ such that $\vol(\Om\setminus P^j)\leq (1-\eps)^{-1}v(\Om;\polcl{L_j\subset H_j;f}{n_j})$. Let $P$ denote the intersection of all these $P^j$'s. Then, $P$ is an $f$-polyhedron with at most $n_1+\cdots+n_m$ facets. Thus, by Lemma \ref{lem_locfin}, for sufficiently large $n_1,...,n_m$, 
\bea 
	\vol(\Om\setminus P)&\leq&
		 (1-\eps)^{-1}\sum\limits_{j=1}^m v(\Om;\polcl{L_j\subset H_j;f}{n_j})\notag \\
		&\leq& (1-\eps)^{-20}\ \lkor \sum\limits_{j=1}^m
		\frac{\lam(q_j)^{\frac{1}{2}} s(L_j)^{\frac{3}{2}}}{\sqrt{n_j}}\notag \\
		&=& (1-\eps)^{-20}\  \lkor \sum\limits_{j=1}^m
		\lam(q_j)^\frac{1}{3}s(L_j)\left(\frac{\lam(q_j)^\frac{1}{3} s(L_j)}{n_j}\right)^\frac{1}{2}. \label{eq_upfin}
\eea
Now, fix an $n\in\pnat$. Suppose, we set
	\be
		n_j=\left\lfloor\frac{\lam(q_j)^\frac{1}{3} s(L_j)}{\sum_{j=1}^m \lam(q_j)^\frac{1}{3} s(L_j)}n\right\rfloor,\ j=1,...,m.
	\ee
Then,
	\be\label{eq_sum}
		n_1+\cdots+n_m\leq n;
	\ee
and 
	\be \label{eq_bdnj}
		(1-\eps)\frac{\lam(q_j)^\frac{1}{3} s(L_j)}{\sum_{j=1}^m \lam(q_j)^\frac{1}{3} s(L_j)}n\leq n_j.
	\ee
We use \eqref{eq_sum}, substitute \eqref{eq_bdnj} in \eqref{eq_upfin} and invoke \eqref{eq_disc} to get
	\bea
		v(\Om;\polcl{f}{n})&\leq& (1-\eps)^{-21}\ \lkor \left(\sum\limits_{j=1}^m
		\lam(q_j)^\frac{1}{3}s(L_j)\right)^\frac{3}{2}\frac{1}{\sqrt{n}}\notag \\
		&\leq &(1-\eps)^{-24}\: \frac{\lkor}{2}  \left(\int_{\bdy\Om}\sigma_\Om\right)^\frac{3}{2}\frac{1}{\sqrt{n}}\label{eq_FINUP},
	\eea
for $n$ sufficiently large.

$\bm{2.}$ Next, we produce a lower bound for $v(\Om;\polcl{f}{n})$. For this, we first extend the tiling $\{L_j\}_{1\leq j\leq m}$ of $\bdy\Om$ to a thin tubular neighborhood of $\bdy\Om$ in $\overline\Om$, denoting the tile corresponding to $L_j$ by $\hat L_j$. This can be done, for instance, by flowing each tile along the inward normal vector field for a short interval of time. Choose a $P_n\in\polcl{f}{n}$ such that $\vol(\Om\setminus P_n)\leq (1-\eps)^{-1}v(\Om;\polcl{f}{n})$. Let $n_j$ be the number of cuts of $P_n$ that cover $J_j$. Due to the upper bound obtained in \eqref{eq_FINUP}, $\lim_{n\rightarrow\infty}v(\Om;\polcl{f}{n})=0$. Thus, by Lemma \ref{lem_shrunbdd}, $\lim_{n\rightarrow\infty}\de(P_n)=0$. This permits us to choose $n$ sufficiently large so that 
	\begin{itemize}
\item The $n_j$ cuts that cover $J_j$ lie in $\widehat L_j$. 
\item Each $n_j$ is large enough so that the bounds in Lemma \ref{lem_locfin} hold.
\end{itemize}
Thus, invoking Lemma \ref{lem_locfin} and using \eqref{eq_JKest}, we have that 
	\beas 
		\vol(\Om\setminus P_n)\geq \sum\limits_{j=1}^m \vol\left(\widehat L_j\setminus P_n\right) 
		&\geq&  \sum\limits_{j=1}^m  v(\Om;\polcl{J_j\subset L_j;f}{n_j})\\
		& \geq&  (1-\eps)^{31}\ \lkor\sum\limits_{j=1}^m
		 \frac{\lam(q_j)^{\frac{1}{2}}
			 s(J_j)^{\frac{3}{2}}}{\sqrt{n_j}}\\
		& \geq&(1-\eps)^{33}\ \lkor\sum\limits_{j=1}^m
		 \frac{\lam(q_j)^{\frac{1}{2}}
			 s(L_j)^{\frac{3}{2}}}{\sqrt{n_j}}.
	\eeas
Now, H{\"o}lder's inequality gives
	\beas
 		 \sum\limits_{j=1}^m \lam(q_j)^\frac{1}{3}s(L_j)
		= \sum\limits_{j=1}^m \left(\frac{\lam(q_j)s(L_j)^3}{n_j}\right)^\frac{1}{3}n_j^{\frac{1}{3}}
			\leq \left(\sum\limits_{j=1}^m \frac{\lam(q_j)^\frac{1}{2}s(L_j)^\frac{3}{2}}{\sqrt{n_j}}\right)^\frac{2}{3} 	
					\left(\sum\limits_{j=1}^m n_j\right)^\frac{1}{3}
	\eeas
Thus, using one of the estimates in \eqref{eq_disc},
	\beas		
		\vol(\Om\setminus P_n)&\geq&(1-\eps)^{33}\ \lkor
		\left(\sum\limits_{j=1}^m \lam(q_j)^\frac{1}{3}s(L_j)\right)^\frac{3}{2}\frac{1}{\sqrt{n_1+\cdots+n_m}}\\
		&\geq & (1-\eps)^{35}\ \lkor
		\left(\frac{1}{4^{1/3}}\int_{\bdy\Om}\sigma_\Om\right)^\frac{3}{2}\frac{1}{\sqrt{n}}.
	\eeas
By our choice of $P_n$,
	\be\label{eq_FINLO}
		v(\Om;\polcl{f}{n}) \geq 	(1-\eps)^{36}\:\frac{\lkor}{2}
		\left(\int_{\bdy\Om}\sigma_\Om\right)^\frac{3}{2}\frac{1}{\sqrt{n}},
	\ee
for all $n$ sufficiently large. 

Finally, we combine \eqref{eq_FINLO} and \eqref{eq_FINUP}, and recall that $\eps\in(0,1/3)$ was arbitrary, to declare the proof of Theorem \ref{thm_MAIN} complete. 
\qed


\section{Appendix: Power Diagrams in the Heisenberg Group} 

\noindent {\bf The Euclidean Plane.} Let $D(a;r)\subset \rl^2$ be a disk of radius $r$ centered at $a\in\rl^2$. The {\em power} of a point $z=(x,y)\in \rl^2$ with respect to $D=D(a;r)$ is the number
	\bes
		\pow(z,D)=|z-a|^2-r^2.
	\ees
Note that if $z$ is outside the disk $D$, then $\pow(z,D)$ is the square of the length of a line segment from $z$ to a point of tangency with $\bdy D$. Thus, it is a generalized distance between $z$ and $\bdy D$. For a collection $\mathscr{D}$ of disks in the plane, the {\em power diagram} or {\em Laguerre-Dirichlet-Voronoi tiling} of $\mathscr{D}$ is the collection of all 
	\bes
		\cell(D)=\{z\in\rl^2:\pow(z,D)< \pow(z,D^*), \forall D^*\in \mathscr{D}\setminus\{D\}\},
		\ D\in\mathscr{D}.
	\ees
If $\mathscr{D}$ consists of equiradial disks, the power diagram reduces to the Dirichlet-Voronoi diagram of the centers of the disks. In general, the power diagram of any $\mathscr{D}$ gives a convex tiling of the plane. 
\begin{figure}[H]
\centering
\resizebox{2.2in}{!}{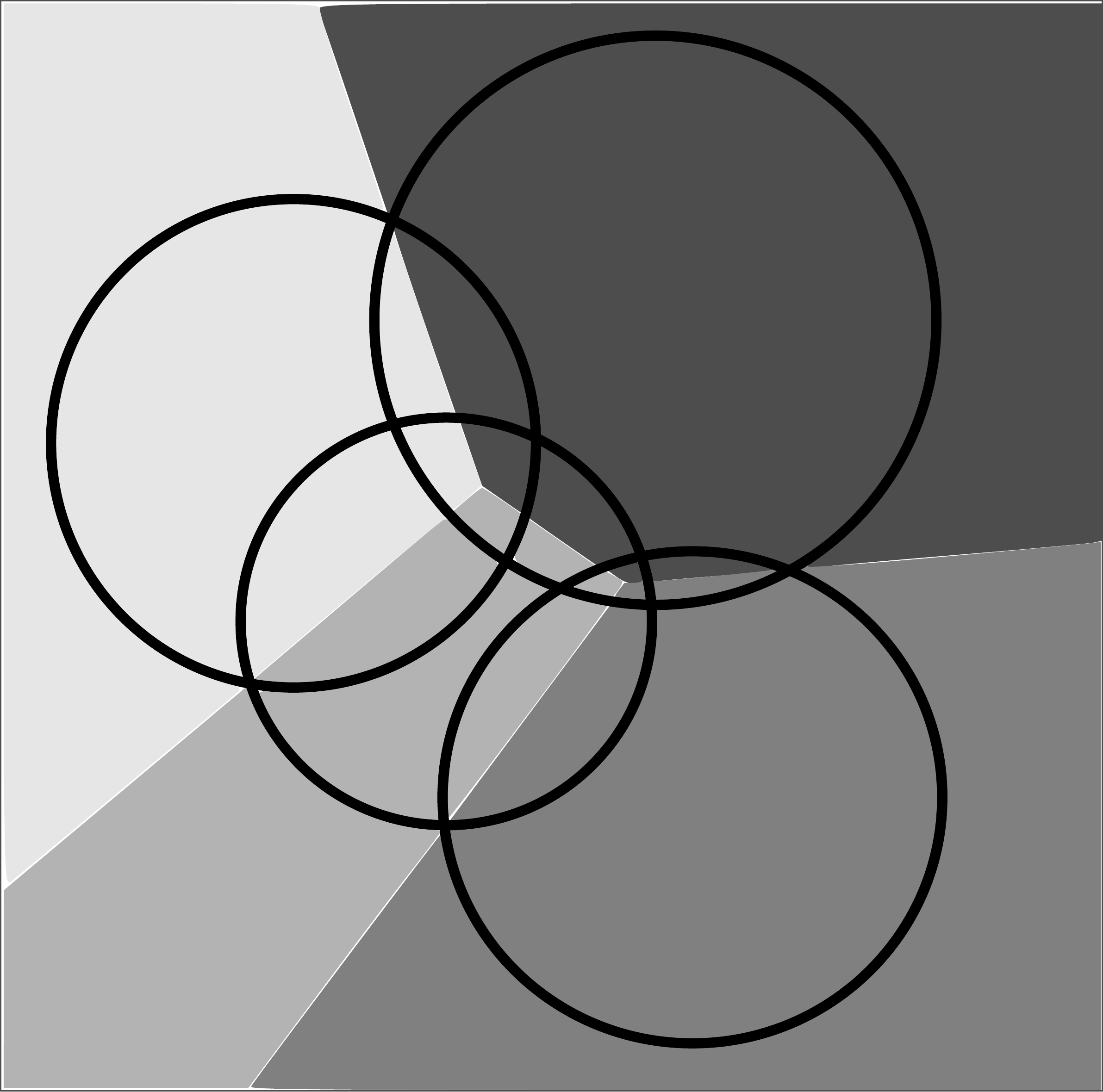}
\caption{A power diagram in the plane.}
\end{figure} 
Power diagrams occur naturally and have found several applications (see \cite{Aur}, for instance). From the point of view of polyhedral approximations, power diagrams (in $\rl^{d-1}$) are intimately related to the constant $\ldiv_{d-1}$ in \eqref{eq_lud} (see \cite{Lu} and \cite{BoLu} for explicit details). 

\noindent {\bf The Heisenberg Group.} Let $K(0;\de)=\{z'\in\bbh:|z_1|^4+(x_2)^2<\de^4\}$ be a Kor{\'a}nyi sphere in $\bbh$ (see \eqref{eq_proj_cuts}). We define the {\em horizontal power} of a point $z'\in\bbh$ with respect to $K=K(0;\de)$ as 
	\bes
		\hpow(z',K)=
		\begin{cases}
			|z_1|^2-\sqrt{\de^4-(x_2)^2}, & \text{if}\ |x_2|\leq \de^2;\\
			 \infty, & \text{otherwise}.
		\end{cases}
	\ees
Note that $K_c:=K\cap \{x_2=c\} $ is a (possibly empty) disk in the $\{x_2=c\}$ plane, and $\hpow((z_1,x_2),K)=\pow(z_1,K_{x_2})$, where the right-hand side --- being a generalized distance --- is set as $\infty$ when $K_{x_2}$ is empty. $\hpow$ is then extended to all Kor{\'a}nyi spheres to be left-invariant under $\cdot_{\scriptscriptstyle\bbh}$ (defined in Section \ref{sec_approxmodel}). For a collection $\mathscr{K}$ of Kor{\'a}nyi spheres in $\bbh$, define the {\em horizontal power diagram} or {\em Laguerre-Kor{\'a}nyi tiling} of $\mathscr{K}$ to be the collection of all 
	\bes
	\hcell(K)=\left\{z'\in \bigcup_{K\in\mathscr{K}}K:\hpow(z',K)<\hpow(z',K^*),\forall K^*\in\mathscr{K}\setminus\{K\}\right\},\ K\in\mathscr{K}.
	\ees
Then, $\hcell(K)\subset$ $K$, for all $K\in\mathscr{K}$. 
\begin{figure}[H]
\centering
\resizebox{2.2in}{!}{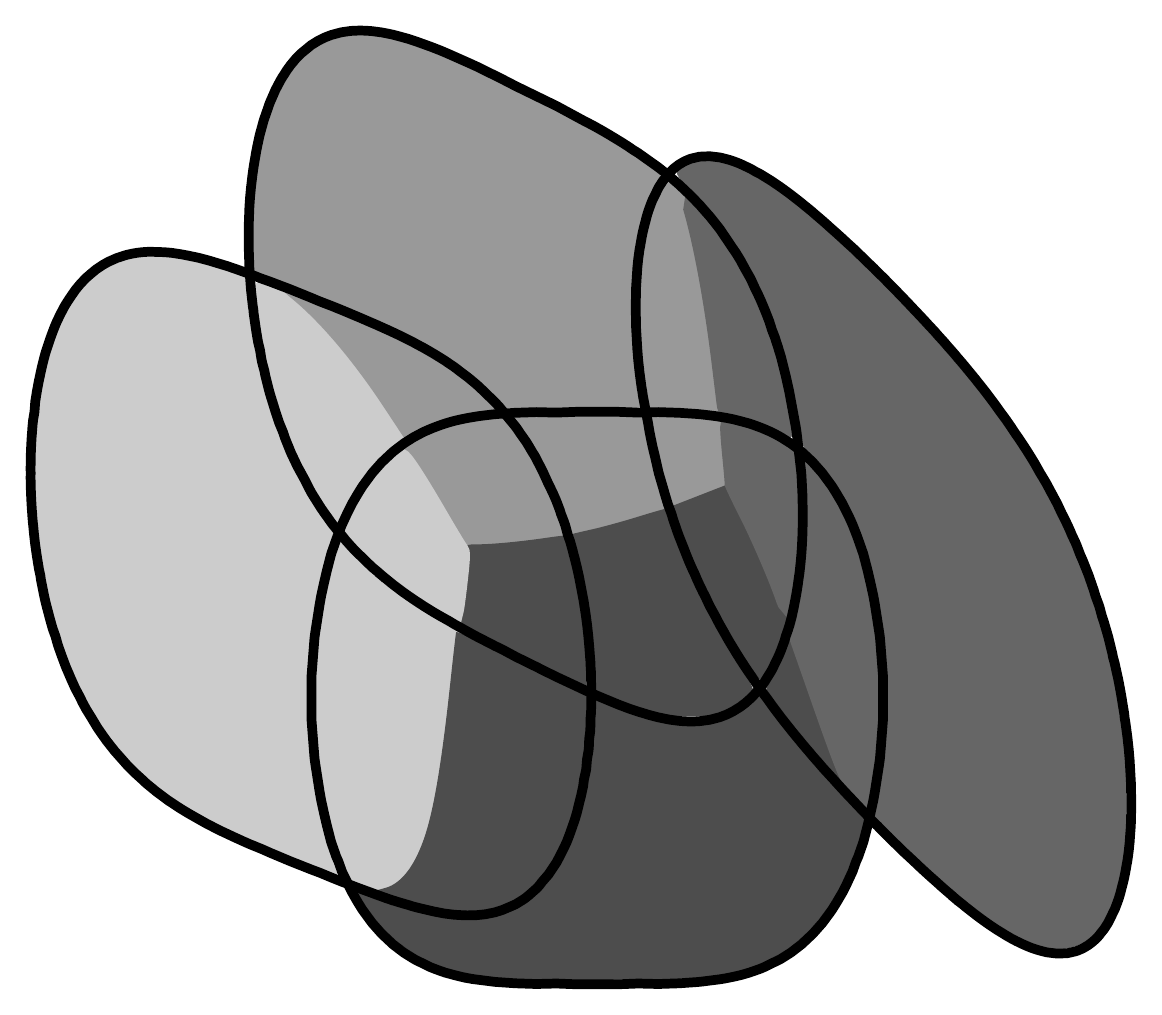}
\caption{A $\{x_1=0\}$-slice of a horizontal power diagram in $\bbh$.}
\end{figure} 

We now give two reasons why this concept is useful for us. Let 
	\beas
		&&\dil_\xi:(z_1,x_2)\mapsto (\xi z_1,\xi^2 x_2),\\
		&&\dil_{w',\xi}:z'\mapsto w'\doth\dil_\xi(-w'\doth z')
	\eeas
be the dilations in $\bbh$ centered at the origin and $w'$, respectively. Then,
\begin{enumerate}
\item $\dil_{w',\xi}(K(w',\de))=K(w',\xi\de)$,
\item $\hpow(\dil_{w',\xi}(z'),K(w',\de))=\xi^2\hpow(z',K(w',\xi^{-1}\de))$, and
\item if $\mathscr{K}=\{K_j:=K(a_j,\de_j):j=1,...,m\}$, then, $\dil_{a_j,\xi}\hcell\big(K_l\big)
		\cap\dil_{a_k,\xi}\hcell\big(K_j\big)=\emptyset$, for all $1\leq l<j\leq m$ and $\xi\leq 1$.
\end{enumerate}

Now, consider the Siegel domain $\mathcal{S}$ and the function $f_\mathcal{S}$ studied in Section \ref{sec_approxmodel}. The cuts of any $f_\mathcal{S}$-polyhedron $P$ over $J\subset\bdy S$ project to a collection $\mathscr{K}_P$ of Kor{\'a}nyi balls in $\C\times\rl$ that form a covering of $J'$. The (open) facets of $P$ project to the horizontal power diagram of $\mathscr{K}_P$. This perspective facilitates the proof of
\begin{lemma}\label{lem_adm}
The cuts of $f_{\mathcal{S}_\lam}$, $\lam>0$, are Jordan measurable and satisfy the doubling property \eqref{eq_doub} for any $\de_{f_{\mathcal{S}_\lam}}>0$ and $D(t)=(1+t)^3$. 
\end{lemma}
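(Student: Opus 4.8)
The plan is to reduce everything to the model case $\lam=1$, and then to a purely real-variable statement about Euclidean disks. First I would observe that the biholomorphism $\Xi:(z_1,z_2)\mapsto(\lam z_1,\lam z_2)$ used in the proof of Corollary \ref{cor_levi} carries $\mathcal{S}_\lam$ to $\mathcal{S}$ and intertwines $f_{\mathcal{S}_\lam}$ with $f_{\mathcal{S}}$, and it is an $\rl$-linear map, hence maps Jordan measurable sets to Jordan measurable sets and scales $\vol$ by the constant factor $\lam^4$. Consequently both the Jordan measurability of the cuts and the doubling inequality \eqref{eq_doub} for $f_{\mathcal{S}_\lam}$ follow at once from the corresponding facts for $f_{\mathcal{S}}$; so it suffices to treat $\lam=1$.

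For $\lam=1$, recall from \eqref{eq_proj_cuts} that a cut $C(w,\de;f_{\mathcal{S}})$ projects under $z\mapsto z'$ to the Kor\'anyi ball $K(w',\sqrt\de)$, and that the projection $z\mapsto z'$ restricted to $\conj{\mathcal{S}}$ is (up to the graphing diffeomorphism $(z_1,x_2)\mapsto(z_1,x_2+i|z_1|^2)$, which is real-analytic) a bijection onto $\C\times\rl$; moreover $C(w,\de;f_{\mathcal{S}})$ is the portion of $\conj{\mathcal{S}}$ lying ``above'' $K(w',\sqrt\de)$ in the $y_2$-direction, i.e.\ it fibers over $K(w',\sqrt\de)$ with interval fibers $|z_1|^2\le y_2\le y_2^{\max}(z')$. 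Jordan measurability of such a set follows from Jordan measurability of $K(w',\sqrt\de)$ together with the continuity of the upper envelope: $K(w',\sqrt\de)$ has boundary $\{|z_1-w_1|^4+(x_2-u_2+2\ima(z_1\conj{w_1}))^2=\de^2\}$, a real-analytic hypersurface in $\C\times\rl$, which has Lebesgue measure zero, so $K(w',\sqrt\de)$ is Jordan measurable; one then notes that the boundary of $C(w,\de;f_{\mathcal{S}})$ is contained in a finite union of graphs of continuous functions over Jordan-null sets, hence is itself $\vol$-null. (Alternatively one can invoke the real-analyticity of $\{|f_{\mathcal{S}}(\cdot,w)|=\de\}$ directly.)

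For the doubling property, the key point is the slicing picture described in the appendix: for a Kor\'anyi ball $K(0;\de)=\{|z_1|^4+x_2^2<\de^4\}$, the slice $K(0;\de)\cap\{x_2=c\}$ is the Euclidean disk in $z_1$ of radius $(\de^4-c^2)^{1/4}$, empty unless $|c|\le\de^2$, and left-translation by $\cdot_{\scriptscriptstyle\bbh}$ preserves this slice structure. Using \eqref{eq_vol}, $\vol(C(w,\de;f_{\mathcal{S}}))=\tfrac{2\pi}{3}\de^3$, and the fibered description above, I would compute that for any finite family of sources $w^1,\dots,w^m\in\bdy\mathcal{S}$ and sizes $\de_1,\dots,\de_m>0$,
\[
\vol\!\left(\bigcup_{j=1}^m C(w^j,(1+t)\de_j;f_{\mathcal{S}})\right)
=\int_{\rl}\operatorname{area}\!\left(\bigcup_{j=1}^m K(w^j,\sqrt{(1+t)\de_j})\cap\{x_2=c\}\right)\!\sqrt{1+\tfrac{\pi^2}{4}}\;\,\text{-type factor}\; dc,
\]
but more cleanly one argues scale by scale: each slice $K(w^j,\sqrt{(1+t)\de_j})\cap\{x_2=c\}$ is contained in the $\sqrt{1+t}$-dilate (in the Euclidean metric of that plane, about the appropriate center) of $K(w^j,\sqrt{\de_j})\cap\{x_2=c\}$ — this is exactly the scaling property $\dil_{w',\xi}(K(w',\de))=K(w',\xi\de)$ from the appendix applied sliced — so a Euclidean-planar covering lemma gives $\operatorname{area}(\bigcup_j A_j^{(1+t)})\le (1+t)^2\operatorname{area}(\bigcup_j A_j)$ slice by slice when the $A_j$ are homothets of a fixed convex body sharing… In fact the cleanest route avoids slicing entirely: since $\de\mapsto C(w,\de;f_{\mathcal{S}})$ and volume scale as $\de^3$, and the dilation $\dil_{w',\xi}$ of $\bbh$ extends (via the parabolic scaling $(z_1,z_2)\mapsto(\xi z_1,\xi^2 z_2)$ of $\CC$, which preserves $\mathcal{S}$) to a map multiplying $\vol$ by $\xi^6$ and sending $C(w,\de)\mapsto C(w,\xi\de)$, one gets $\vol(\bigcup_j C(w^j,(1+t)\de_j))\le(1+t)^3\vol(\bigcup_j C(w^j,\de_j))$ directly whenever all the sources coincide; for distinct sources one passes through the enclosing Kor\'anyi ball as in property (1)–(3) of the appendix dilations to reduce each $C(w^j,(1+t)\de_j)$ to a union of translates, and monotonicity of volume under unions finishes it with constant $(1+t)^3$ and any $\de_{f_{\mathcal{S}_\lam}}>0$.

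The main obstacle I anticipate is not the scaling bookkeeping but making the covering/inclusion step $C(w,(1+t)\de;f_{\mathcal{S}})\subset\bigcup(\text{translates of }C(w,\de;f_{\mathcal{S}}))$ honest, since Kor\'anyi balls are not Euclidean balls and the group dilations are anisotropic; the resolution is precisely the observation in the appendix that $\hpow$ and the cuts interact well with $\dil_{w',\xi}$, so that the ``thickened union'' $\bigcup_j K(w^j,\sqrt{(1+t)\de_j})$ is the image of $\bigcup_j K(w^j,\sqrt{\de_j})$ under the (non-injective but volume-controlled) correspondence $z'\mapsto z'$ composed with the scaling, and the factor is exactly the Jacobian $(1+t)^3$ of the parabolic dilation in $\C\times\rl$ promoted to $\CC$. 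Everything else — Jordan measurability, the reduction from $\lam$ to $1$ — is routine given Corollary \ref{cor_levi} and the formulas \eqref{eq_proj_cuts}, \eqref{eq_vol}, \eqref{eq_volkor}.
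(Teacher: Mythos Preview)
Your reduction to $\lam=1$ and the Jordan-measurability argument are fine and match the paper. The genuine gap is in the doubling inequality for a union of cuts with \emph{distinct} sources. None of your sketched routes closes it: Euclidean slicing by $\{x_2=c\}$ fails because the shear term $2\ima(z_1\conj{w_1^j})$ in \eqref{eq_proj_cuts} means the slices of translated Kor\'anyi balls are not concentric disks, and even at the origin the slice of $K(0;\sqrt{(1+t)\de})$ at a fixed level $c$ is not a dilate of the slice of $K(0;\sqrt\de)$ at that same level (the larger ball has nonempty slices where the smaller one is empty); a single parabolic dilation of $\CC$ handles only the case of coinciding sources; and ``reducing to a union of translates and using monotonicity'' bounds only $\sum_j\vol(C_j)$, not $\vol(\cup_j C_j)$.

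The idea you are missing, which the paper actually uses, is to express the four-dimensional volume as an integral of the fiber height over the base and then partition the base by the horizontal power diagram:
\[
\vol\Big(\bigcup_j C_j(t)\Big)
=\int_{\cup_j K_j(t)}\max_j\{-\hpow(z',K_j(t))\}\,dz'
=-\sum_j\int_{\hcell_j(t)}\hpow(z',K_j(t))\,dz',
\]
where $\{\hcell_j(t)\}$ is the Laguerre--Kor\'anyi tiling of $\{K_j(t)\}$. Now one applies to the $j$-th summand the dilation $\dil_{v^j,\,1/\sqrt{1+t}}$ centered at that summand's own source; property~(2) scales $\hpow$ by $(1+t)$ and the Jacobian contributes $(1+t)^2$, producing exactly $(1+t)^3$. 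The step you never supply is why the shrunken cells can be reassembled without overcounting, and that is precisely property~(3): the images $\dil_{v^j,\,1/\sqrt{1+t}}(\hcell_j(t))$ remain pairwise disjoint and sit inside $\cup_j K_j(0)$, so the sum is dominated by $\int_{\cup_j K_j(0)}\max_j\{-\hpow(\cdot,K_j(0))\}\,dz'=\vol(\cup_j C_j(0))$. You cite properties (1)--(3) but never introduce the $\hcell$-decomposition that makes them applicable; without it there is no way to run source-dependent dilations on a single union.
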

\begin{proof} The Jordan measurability of the cuts is obvious. Now, without loss of generality, we may assume $\lam=1$ (the map $(z,w)\mapsto (\lam z,\lam w)$ can be used to handle the other cases). Let $H\subset\bdy \mathcal{S}$ be a compact set, $\{w^j\}_{1\leq j \leq m}\subset H$, $\{\de_j\}_{1\leq j \leq m}\subset (0,\infty)$ and $t>0$. For $j=1,...,m$, let 
	\beas
		&C_j(t):=C(w_j,(1+t)\de_j;f_\mathcal{S}),&\\
		& v^j=(w^j)'=(w_1^j,u_2^j),&
	\eeas
and (see \eqref{eq_proj_cuts})
	\bes
		K_j(t):=C_j(t)'=K\left(v^j;\sqrt{(1+t)\de_j}\right).
	\ees
Consider $\mathscr{K}=\{K_j(t):1\leq j\leq m\}$ and the corresponding horizontal power diagram $\{\hcell_j(t)=\hcell(K_j(t)):1\leq j\leq m\}$. Then, setting $dz'=dx_1dy_1dx_2$, we have, by a change of variables and $(1)$, $(2)$ and $(3)$ above, that
\beas
		&&\vol\left(\bigcup\limits_{j=1}^{m}C_j(t)\right)\\
		&=&\int\limits_{\cup_{j=1}^m K_j(t)}
			\max\limits_{1\leq j\leq m}\left\{\rea{\sqrt{\de_j^2-(x_2-u_2^j+2\ima z_1\conj{w_1}^j)}}-|z_1-w_1^j|^2\right\}dz'\\
		&=&\int\limits_{\cup_{j=1}^m K_j(t)}\max\limits_{1\leq j\leq m}\{-\hpow(z',K_j(t))\}dz'\\
		&=&
		-\sum\limits_{j=1}^{m}\int\limits_{\hcell_j(t)}\hpow( z',K_j(t))
			dz'\\
		&=&-(1+t)^2\sum\limits_{j=1}^{m}\int_{\dil_{v^j,\frac{1}{\sqrt{1+t}}}(\hcell_j(t))}
			\hpow\left(\dil_{v^j,\sqrt{1+t}}(\zeta),K_j(t)\right)d\zeta			\\
		&=&-(1+t)^3\sum\limits_{j=1}^{m}\int_{\dil_{v^j,\frac{1}{\sqrt{1+t}}}(\hcell_j(t))}
			\hpow\left(\zeta,K_j(0)\right)d\zeta		\\
		&\leq& (1+t)^3\int_{\cup_{j=1}^m K_j(0)}
			\max\left\{-\hpow\left(\zeta,K_j(0)\right):1\leq j\leq m\right\}d\zeta\\
		&=& (1+t)^3\vol\left(\bigcup\limits_{j=1}^{m}C_j(0)\right),\ \forall t\geq 0.
	\eeas
\end{proof}

The computations in the above proof also show that
\bes
	\lkor=\lim_{n\rightarrow\infty}\sqrt{n}\inf\left\{-\sum\limits_{K\in\mathscr{K}}\int_{\hcell(K)}\hpow(z',K)dz':I\subset \bigcup_{K\in\mathscr{K}}K,\ \#(\mathscr{K})\leq n \right\},
\ees
where $I$ is the unit square in $\C\times\rl$ (see Section \ref{sec_approxmodel}). Our proof of Lemma \ref{lem_heis} yields bounds for $\lkor$ as follows:
\bes
	0.0003\approx\frac{4\sqrt{2}}{\pi^23^7}\leq \lkor\leq \frac{5\sqrt{5}\pi}{3\sqrt{2}}\approx 8.2788.
\ees
It would be interesting to know if computations, similar to the ones carried out by B{\"o}r{\"o}czky and Ludwig in \cite{BoLu} for $\ldiv_2$, can be done to find the exact value of $\lkor$.  

\bibliography{Gupta_bib}
\bibliographystyle{plain}
\end{document}